\newcommand{\R}{\textnormal{I\kern-0.21emR}}
\newcommand{\N}{\textnormal{I\kern-0.21emN}}
\renewcommand{\geq}{\geqslant}
\renewcommand{\leq}{\leqslant}
\def\B{{\mathbb B}}
\def\e{{\varepsilon}}
\def\YYint#1#2#3{{\setbox0=\hbox{$#1{#2#3}{\iint}$}
    \vcenter{\hbox{$#2#3$}}\kern-.51\wd0}}
\newtheorem*{theorem*}{Theorem}
\newtheorem{theorem}{Theorem}
\newtheorem{material}{material}
\newtheorem{proposition}[material]{Proposition}
\newtheorem{definition}[material]{Definition}
\newtheorem{lemma}[material]{Lemma}
\newtheorem{remark}[material]{Remark}
\def\O{{\Omega}}
\def\n{{\nabla}}
\def\p{{\varphi}}
 \newcommandx{\christian}[2][1=]{\todo[linecolor=red,backgroundcolor=red!25,bordercolor=red,#1]{#2}}
 \newcommandx{\laura}[2][1=]{\todo[linecolor=blue,backgroundcolor=blue!25,bordercolor=blue,#1]{#2}}
 \newcommandx{\info}[2][1=]{\todo[linecolor=green,backgroundcolor=green!25,bordercolor=green,#1]{#2}}
 \newcommandx{\improvement}[2][1=]{\todo[linecolor=yellow,backgroundcolor=yellow!25,bordercolor=yellow,#1]{#2}}
  \newcommandx{\biblio}[2][1=]{\todo[linecolor=blue,backgroundcolor=magenta!25,bordercolor=blue,#1]{#2}}
 \numberwithin{equation}{section}
\begin{document}
\title{\sf Stability of optimal shapes and convergence of thresholding algorithms in linear and spectral optimal control problems}


\author{Antonin Chambolle\footnote{CEREMADE, UMR 7534, CNRS \& Universit\'e Paris Dauphine PSL, 75775 Paris Cedex 16, France
(\texttt{chambolle@ceremade.dauphine.fr})}
\and Idriss Mazari-Fouquer\footnote{CEREMADE, UMR 7534,  Universit\'e Paris Dauphine PSL, 75775 Paris Cedex 16, France
(\texttt{mazari@ceremade.dauphine.fr})}
\and Yannick Privat\footnote{IRMA, Universit\'e de Strasbourg, CNRS UMR 7501, Inria, 7 rue Ren\'e Descartes, 67084 Strasbourg, France (\texttt{yannick.privat@unistra.fr}).}~\footnote{Institut Universitaire de France (IUF).}}
\date{\today}

\maketitle

\begin{abstract}
We prove the convergence of the fixed-point (also called thresholding) algorithm in three optimal control problems under large volume constraints. This algorithm was introduced by C\'ea, Gioan and Michel, and is of constant use in the simulation of $L^\infty-L^1$ optimal control problems. In this paper we consider the optimisation of the Dirichlet energy, of Dirichlet eigenvalues and of certain non-energetic problems. Our proofs rely on new diagonalisation procedure for shape hessians in optimal control problems, which leads to local stability estimates.
\end{abstract}

\noindent\textbf{Keywords:} Convergence analysis, Numerical algorithms in optimal control, PDE constrained optimisation, Shape optimisation, Quantitative inequalities, Thresholding scheme.

\medskip

\noindent\textbf{AMS classification (2020): }49M05, 49M41, 49N05, 49Q10.


\section{Introduction}

\subsection{Scope of the paper: the fixed point algorithm of C\'ea,  Gioan and Michel for optimal control problems}
The fixed-point algorithm, also dubbed thresholding algorithm, is ubiquitous in the numerical simulations of optimal control problems \cite{BintzLenhart,Quantum,Chanillo2000,HintermullerKaoLaurain,KaoLouYanagida,KaoMohammadi,KaoMohammadiOsting,LLNP,Pironneau,KaoMohammadi2}. It was first introduced by C\'ea, Gioan \& Michel in the seminal \cite{Cea}. This algorithm is designed to solve optimal control problems subject to constraints expressed in $L^1$ and $L^\infty$ norms. These problems are part of a broad class of optimal control problems writing:
\begin{equation}\label{Eq:Ex}\tag{$\bold P$}
\max_{f\in \mathcal F}\left(J(f):= \int_\O j(u_f,\n u_f)\right)\text{ where }\begin{cases}-\nabla \cdot (A\n u_f)=F(x,u_f)+H(f,u_f)\,, 
\\ \mathcal Bu_f=0\end{cases}\end{equation} where $\mathcal B$ is a boundary conditions operator, $A$ is an elliptic matrix and $H$ accounts for the coupling between the state $u_f$ and the control $f$. The admissible class of controls is of the form 
\[ \mathcal F:=\left\{f\in L^\infty(\O):\quad 0\leq f\leq 1\text{ a.e., }\fint_\O f=V_0\right\}\] for some volume constraint $V_0\in (0;1)$. In most cases, no explicit description of the maximisers is available and the algorithm proposed in \cite{Cea} can be recast in terms of the so-called \emph{switch function}: provided all functionals at hand are regular enough we may represent, for a given $f\in \mathcal F$, the derivative of the criterion through a function $p_f$, called the \emph{switch function of the optimal control problem}: for any admissible perturbation $h$ at $f$ (for the sake of simplicity, such that for any $\e>0$ small enough $f+\e h\in \mathcal F$;  see Definition \ref{De:Admissible} for the proper notion), the derivative of $J$ at $f$ in the direction $h$ writes
\[ \dot J(f)[h]=\int_\O h p_f.\]

The thresholding algorithm, in short, picks the direction $h$ that maximises $\dot J(f)[\cdot]$:
\begin{algorithm}[H]
\caption{Thresholding algorithm}\label{Algo}
\begin{algorithmic}[1]
\State Initialisation at $f_0\in \mathcal F$
\State $k\gets 0$
\State Compute $p_{f_k}$
\State Compute $c_k$ such that $\mathrm{Vol}(\{p_{f_k}>c_k\})=V_0\mathrm{Vol}(\O)$.
\State $f_k\gets \mathds 1_{\{p_{f_k}>c_k\}}$
\State $k\gets k+1$.
\end{algorithmic}
\end{algorithm}
Some difficulties can arise. Indeed, we can only hope for this algorithm to be well-posed if the solutions to the optimal control problem \eqref{Eq:Ex} only has so-called bang-bang solutions \emph{i.e.} any optimal $f^*$ writes $f^*=\mathds 1_{E^*}$ for some measurable $E^*$. Indeed, in general, one can only define a Lagrange multiplier $c_k$ such that 
\[ \mathrm{Vol}(\{p_{f_k}>c_k\})\leq V_0\mathrm{Vol}(\O)< \mathrm{Vol}(\{p_{f_k}\geq c_k\}).\] For the algorithm to be properly defined, one needs to ensure that the level-set $\{p_{f_k}=c_k\}$ has zero measure, which ``usually" implies that \eqref{Eq:Ex} indeed satisfies this bang-bang property. Observe that some simple linear control problems do not satisfy this property \cite{Nadin_2020}. But even assuming this is not a problem, and despite the fact that this method has proved very efficient in past contributions, the theoretical convergence of the sequence of iterates was, to the best of our knowledge, never proved apart from cases where explicit computations are available; see \cite{KaoMohammadiOsting} where a detailed study of the convergence rate of the method in the one-dimensional case is undertaken. In \cite[Condition (2.4)]{Cea} a sufficient condition for the convergence of the algorithm is given for very specific types of functionals; some of the conditions given in this paper that ensure convergence are similar and are akin to a coercivity condition in shape differentiation \cite{DambrineLamboley}. Finally, we mention that in \cite{Eppler_2007} a study of convergence rate for discretised version of shape optimisation algorithms is obtained.

The purpose of this paper is to study Algorithm \ref{Algo} for three optimal control problems and to \emph{establish its global convergence for large volume constraints} $V_0$. This is done by building on recent progress in the study of quantitative inequalities in optimal control theory \cite{MazariNLA,MRB2020}, and by introducing a \emph{new diagonalisation procedure for shape hessians in optimal control theory}. Such a procedure is of independent interest and relates to stability estimates in optimal control theory.

\paragraph{Plan of the article }In section \ref{Se:IntroPreliminaire} we lay out our notations and introduce our admissible classes.
We study three different problems; the introduction is divided accordingly.
\begin{itemize}
\item First, we consider the optimisation of the Dirichlet energy, that is, the optimisation problem
\[ \min_{f\in \mathcal F}\frac12\int_\O |\n u_f|^2-\int_\O fu_f\text{ subject to }\begin{cases}-\Delta u_f=f&\text{ in }\O\,, 
\\ u_f=0&\text{ on }\partial \O.\end{cases}\] We refer to section \ref{Se:IntroDirichlet} of this introduction. 
\item Second, we consider the optimisation of weighted Dirichlet eigenvalues, that is, the optimisation problem
\[ \min_{f\in \mathcal F}\lambda(f)\text{ where $\lambda(f)$ is the first Dirichlet eigenvalue of $-\Delta -f$}\] We underline that this is a bilinear control problem. We refer to Section \ref{Se:IntroEigenvalue}.
\item Third, we consider a non-energetic optimal control problem: for a given function $j=j(x,u)$ that is convex in $u$ we seek to solve
\[\max_{f\in \mathcal F}J(f):=\int_\O j(x,u_f)  \text{ subject to }\begin{cases}-\Delta u_f=f&\text{ in }\O\,, 
\\ u_f=0&\text{ on }\partial \O.\end{cases}\] We refer to section \ref{Se:IntroNonEnergetic}.
\end{itemize}
For the sake of readability, we give in section \ref{Se:IntroPlan} the general plan of the proof of convergence. The core of the paper is devoted to the proofs of the result for the Dirichlet energy. As the proofs for the other two problems are very similar we postpone them to appendices.
In the conclusion of this article, we discuss several limits of our analysis and offer possible future research directions. 
 
 \paragraph{Related algorithms.}
 Before we move on to the main part of the introduction, let us mention that Algorithm \ref{Algo} is linked to the \emph{thresholding algorithm}, studied in detail since its introduction in \cite{MBO} and which, broadly speaking, seeks to approximate mean-curvature motions using a thresholding of the solutions to certain PDEs. The theoretical and numerical aspects of this scheme have been the subject of an intense research activity in the past years \cite{Barles_1995,evans1993convergence,ishii1995generalization,ISHII_1999,laux2016convergence,Laux_2017,ruuth2001diffusion,Swartz_2017}. Let us mention that most of the methods used in the aforementioned works can not be used here as they use the behaviour of solutions of PDEs defined in $\R^d$. The presence of boundary conditions in the models under consideration here prohibits relying on the tools these authors developed .

Finally, it should be mentioned that a closely related fixed-point type algorithm was used in \cite{MR2235384,MR2837788} to numerically solve topological optimization problems.

\subsection{Notational conventions, preliminary definitions and setting}\label{Se:IntroPreliminaire}
Throughout the paper, $\O$ is a fixed $\mathscr C^2$ bounded domain of $\R^d$ ($d\geq 2$) which is simply connected (in particular $\partial \O$ is a connected smooth submanifold of $\R^d$). For a fixed volume constraint $V_0\in (0;1)$ we define the set of admissible controls 
\begin{equation}\tag{$\bold{Adm}$}
\mathcal F(V_0):=\left\{f\in L^\infty(\O):\, 0\leq f\leq 1\text{ a.e., } \fint_\O f=V_0\right\}.
\end{equation}
By \cite[Proposition 7.2.17]{HenrotPierre} the set $\mathcal F(V_0)$ is convex and closed for the weak $L^\infty-*$ topology. Moreover, its extreme points are characteristic functions of subsets: using $\mathrm{Extr}(X)$ to denote the extreme points of a convex set $X$ we have 
\[ \mathrm{Extr}(\mathcal F(V_0))=\left\{\mathds 1_E\,, E\subset \O\,, \mathrm{Vol}(E)=V_0\mathrm{Vol}(\O)\right\}.\] It is convenient to introduce a bit of terminology:
\begin{definition}\label{De:BangBang}
A bang-bang function of $\mathcal F(V_0)$ is an extreme point of $\mathcal F(V_0)$. 
\end{definition}
In other words, $m$ is bang-bang in $\mathcal F(V_0)$ if, and only if, there exists a measurable subset $E\subset \O$ such that $\mathrm{Vol}(E)=V_0\mathrm{Vol}(\O)$ and such that $m=\mathds 1_E$.

\begin{remark}\label{Re:MaxBangbang}
A salient feature of all the optimisation problems we consider in this paper is that their solutions are bang-bang functions. We explain in the conclusion why we can not at this point bypass this underlying assumption.
\end{remark}
Finally, as our proofs rely on optimality conditions, we need to introduce the tangent cone to an element $f\in \mathcal F(V_0)$. 
\begin{definition}\label{De:Admissible}
Let $f \in \mathcal F(V_0)$. The tangent cone to $\mathcal F(V_0)$ at $f$ is the set of functions $h\in L^\infty(\O)$ such that, for any sequence $\{t_k\}_{k\in \N}$ converging to 0, there exists a sequence $\{h_k\}_{k\in \N}$ that converges strongly to $h$ in $L^2(\O)$ and such that, for any $k\in \N$, $f+t_kh_k\in \mathcal F(V_0)$. Functions belonging to this tangent cone are called admissible perturbations at $f$.
\end{definition}
Further characterisations of the tangent cone are available in \cite{bednarczuk:inria-00073499,CominettiPenot}.

\subsection{Optimisation of the Dirichlet energy}\label{Se:IntroDirichlet}
\paragraph{The energy functional and the optimal control problem.}
The first problem we study is the optimisation of the Dirichlet energy. For a given $f\in L^2(\O)$, let $u_f\in W^{1,2}_0(\O)$ be the unique solution of 
\begin{equation}\label{Eq:Main}\begin{cases}
-\Delta u_f=f&\text{ in }\O\,, 
\\ u_f=0&\text{ on }\partial \O.\end{cases}\end{equation} This equation has a variational formulation: $u_f$ is the unique minimiser in $W^{1,2}_0(\O)$ of the energy $\mathcal E_f$ defined as 
\[ \mathcal E_f:W^{1,2}_0(\O)\ni u\mapsto \frac12\int_\O |\n u|^2-\int_\O fu.\]
The \emph{Dirichlet energy} associated with $f$ is defined by
\begin{equation}\label{Eq:DirichletEnergy} \mathscr E(f):=\mathcal E_f(u_f)=\frac12\int_\O |\n u_f|^2-\int_\O fu_f.\end{equation} The optimal control problem is 
\begin{equation}\label{Pv:Dirichlet}\tag{$\bold{P}_{\mathrm{Dir}}$}
\fbox{$\displaystyle \min_{f\in \mathcal F(V_0)}\mathscr E(f).
$}
\end{equation}

The following result is standard.
\begin{lemma}\label{Le:BasicDirichlet}
The problem \eqref{Pv:Dirichlet} has a solution. Furthermore, the map $\mathscr E$ is strictly concave. In particular any solution $f^*$ of \eqref{Pv:Dirichlet} is a bang-bang function in the sense of Definition \ref{De:BangBang}: there exists $E^*\subset \O$ such that $f^*=\mathds 1_{E^*}$, with $\mathrm{Vol}(E^*)=V_0\mathrm{Vol}(\O).$ 
\end{lemma}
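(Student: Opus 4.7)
The plan is to handle the three claims in sequence, with the crucial observation that $\mathscr E$ admits a very simple closed form after using the PDE, which makes concavity transparent.

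For existence, I would apply the direct method. The set $\mathcal F(V_0)$ is bounded in $L^\infty(\O)$ and weakly-$*$ closed (as recalled from Henrot--Pierre), hence weakly-$*$ compact. Taking a minimising sequence $(f_n)$, extract $f_n \rightharpoonup f^*$ weakly in $L^2(\O)$; elliptic regularity yields $u_{f_n} \to u_{f^*}$ strongly in $W^{1,2}_0(\O)$ (since $(-\Delta)^{-1} : L^2 \to H^1_0$ is compact). Then $\int_\O |\nabla u_{f_n}|^2 \to \int_\O |\nabla u_{f^*}|^2$, and weak--strong pairing gives $\int_\O f_n u_{f_n} \to \int_\O f^* u_{f^*}$, so $\mathscr E(f_n) \to \mathscr E(f^*)$ and $f^*$ is a minimiser.

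For strict concavity, I would first use the PDE to rewrite $\mathscr E$ in a form that is manifestly negative definite quadratic. Testing \eqref{Eq:Main} against $u_f$ gives $\int_\O |\nabla u_f|^2 = \int_\O f u_f$, so
\[
\mathscr E(f) \;=\; -\tfrac12 \int_\O f\,u_f.
\]
By linearity of the solution map $f \mapsto u_f$ and symmetry of the Green operator, the bilinear form $(f,g) \mapsto \int_\O f\,u_g = \int_\O \nabla u_f \cdot \nabla u_g$ is a symmetric positive semidefinite form on $L^2(\O)$. It is in fact positive definite: $\int_\O f\,u_f = \|\nabla u_f\|_{L^2}^2 = 0$ forces $u_f \equiv 0$, hence $f = -\Delta u_f = 0$. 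Thus $f \mapsto \int_\O f\,u_f$ is a strictly convex quadratic form, and $\mathscr E = -\tfrac12\int_\O f u_f$ is strictly concave on $L^2(\O)$, a fortiori on $\mathcal F(V_0)$.

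For the bang-bang conclusion, I would invoke the standard fact that any minimiser of a strictly concave functional over a convex set is an extreme point. Concretely, if $f^*$ minimises $\mathscr E$ on $\mathcal F(V_0)$ and were written as $f^* = \tfrac12(g_0 + g_1)$ with $g_0, g_1 \in \mathcal F(V_0)$ and $g_0 \neq g_1$, strict concavity would yield
\[
\mathscr E(f^*) \;>\; \tfrac12\bigl(\mathscr E(g_0) + \mathscr E(g_1)\bigr) \;\geq\; \min\bigl(\mathscr E(g_0),\mathscr E(g_1)\bigr) \;\geq\; \mathscr E(f^*),
\]
a contradiction. Hence $f^*$ is an extreme point of $\mathcal F(V_0)$, which by the characterisation of $\mathrm{Extr}(\mathcal F(V_0))$ recalled above means $f^* = \mathds 1_{E^*}$ for some measurable $E^* \subset \O$ with $\mathrm{Vol}(E^*) = V_0 \mathrm{Vol}(\O)$. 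No step here is a real obstacle — all three points are soft and well-known; the only mild subtlety is to be sure the quadratic form $(f,g) \mapsto \int_\O f u_g$ is positive definite, which follows at once from invertibility of $-\Delta$ with Dirichlet boundary conditions.
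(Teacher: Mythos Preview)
Your argument is correct and is precisely the standard one the paper has in mind: the paper does not actually give a proof of this lemma, stating only that ``the following result is standard,'' so there is nothing to compare against. Your three steps (direct method on the weak-$*$ compact set $\mathcal F(V_0)$, the rewriting $\mathscr E(f)=-\tfrac12\int_\O f u_f=-\tfrac12\Vert\nabla u_f\Vert_{L^2}^2$ to exhibit strict concavity, and the extreme-point argument) are exactly what is expected and require no further justification.
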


\paragraph{The switch function and the thresholding algorithm.}
To describe the thresholding algorithm we need to specify the switch function of $\mathscr E$. The following lemma is well-known.
\begin{lemma}\label{Le:SwitchDirichlet}
The map $f\mapsto \mathscr E(f)$ is Fr\'echet differentiable at any $f\in \mathcal F(V_0)$ and, for any $f\in \mathcal F(V_0)$, for any admissible perturbation $h$, there holds
\[ \dot{\mathscr E}(f)[h]=-\int_\O h u_f.\]
\end{lemma}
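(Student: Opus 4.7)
The strategy is to exploit two structural features of $\mathscr E$: first, the linearity of the solution map $f\mapsto u_f$ (since \eqref{Eq:Main} is linear), and second, the self-adjointness of $-\Delta$ with Dirichlet boundary conditions, which makes $\mathscr E$ a quadratic form on $L^2(\O)$. Once these two are in place the Fréchet differentiability and the formula for the derivative become a direct Taylor expansion.

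The first step I would take is to simplify $\mathscr E(f)$ by testing the equation $-\Delta u_f = f$ against $u_f$ itself and integrating by parts, which yields $\int_\O |\n u_f|^2 = \int_\O f u_f$ and therefore
\begin{equation*}
\mathscr E(f)\;=\;\frac{1}{2}\int_\O |\n u_f|^2 - \int_\O f u_f\;=\;-\frac{1}{2}\int_\O f u_f.
\end{equation*}
This already makes the concavity and differentiability transparent. Next, for an admissible perturbation $h$ (in the sense of Definition \ref{De:Admissible}) and small $\e>0$, the linearity of the PDE gives $u_{f+\e h}=u_f+\e u_h$ where $u_h\in W^{1,2}_0(\O)$ solves $-\Delta u_h=h$. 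Plugging into the simplified expression,
\begin{equation*}
\mathscr E(f+\e h) - \mathscr E(f) = -\frac{\e}{2}\int_\O h u_f -\frac{\e}{2}\int_\O f u_h - \frac{\e^2}{2}\int_\O h u_h.
\end{equation*}

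The symmetry step is the key algebraic identity: integrating by parts twice,
\begin{equation*}
\int_\O f u_h = \int_\O (-\Delta u_f)\,u_h = \int_\O \n u_f\cdot \n u_h = \int_\O (-\Delta u_h)\,u_f = \int_\O h u_f,
\end{equation*}
so the two linear contributions combine to $-\e\int_\O h u_f$. This produces the announced formula $\dot{\mathscr E}(f)[h]=-\int_\O h u_f$.

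It remains to upgrade this to Fréchet differentiability on $\mathcal F(V_0)\subset L^\infty(\O)$. I would bound the quadratic remainder by standard elliptic regularity: $\|u_h\|_{L^2(\O)}\leq C\|h\|_{L^2(\O)}$ (e.g.\ via Lax--Milgram and Poincaré), so by Cauchy--Schwarz
\begin{equation*}
\left|\int_\O h u_h\right|\leq \|h\|_{L^2(\O)}\|u_h\|_{L^2(\O)}\leq C\|h\|_{L^2(\O)}^2 = o(\|h\|_{L^2(\O)}),
\end{equation*}
which establishes Fréchet differentiability with the derivative $h\mapsto -\int_\O h u_f$ as a continuous linear form on $L^2(\O)$ (hence \emph{a fortiori} on $L^\infty(\O)$). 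There is no real obstacle here; the only point where one should be careful is treating $h$ as an arbitrary $L^2$ (or $L^\infty$) perturbation rather than restricting to the tangent cone when proving differentiability, since the formula for $\dot{\mathscr E}(f)$ extends by linearity to all of $L^2(\O)$ and the admissibility condition of Definition \ref{De:Admissible} is only needed to evaluate $J$ along a path in $\mathcal F(V_0)$.
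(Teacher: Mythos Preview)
Your argument is correct. The paper does not actually prove this lemma; it introduces it with ``The following lemma is well-known'' and moves on, so there is nothing to compare against. Your route---rewriting $\mathscr E(f)=-\tfrac12\int_\O f u_f$ via the weak formulation, using linearity of $f\mapsto u_f$, and invoking the self-adjointness identity $\int_\O f u_h=\int_\O h u_f$---is exactly the standard computation one would expect, and the remainder bound $\bigl|\int_\O h u_h\bigr|\leq C\Vert h\Vert_{L^2(\O)}^2$ cleanly yields Fr\'echet differentiability.
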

Consequently, for the Dirichlet energy, the switch function of the optimal control problem is  $-u_f$. {This allows to describe the thresholding algorithm:
\begin{algorithm}[H]
\caption{Thresholding algorithm for the Dirichlet energy}\label{Algo:Dirichlet}
\begin{algorithmic}[1]
\State Initialisation at $f_0\in \mathcal F$
\State $k\gets 0$
\State Compute $u_{f_k}$
\State Compute $c_k$ such that $\mathrm{Vol}(\{u_{f_k}>c_k\})=V_0\mathrm{Vol}(\O)$.
\State $f_k\gets \mathds 1_{\{u_{f_k}>c_k\}}$
\State $k\gets k+1$.
\end{algorithmic}
\end{algorithm}}

\medskip
\begin{remark}\label{Re:DirichletWell?}
Of course we have the same problem as in Algorithm \ref{Algo}, namely, that we would need to ensure that the algorithm is well-defined in the sense that for any index $k$ there indeed exists a $c_k\in \R$ such that $\mathrm{Vol}(\{u_{f_k}>c_k\})=V_0\mathrm{Vol}(\O)$. This is also covered by our theorem.
\end{remark}

\paragraph{Some terminology.}
 We introduce the following definition:
\begin{definition}[Critical points]\label{De:CriticalPoint}
For any $V_0\in (0;1)$, for any $f\in \mathcal F(V_0)$, we say $f$ is a \textbf{critical point of $\mathscr E$} if:
\begin{enumerate}
\item There exists a unique $\mu_f=\mu_{u_f,V_0}$ such that 
\[ \mathrm{Vol}(\{u_f>\mu_f\})=V_0\mathrm{Vol}(\O)=\mathrm{Vol}(\{u_f\geq \mu_f\}),\]
\item $f=\mathds 1_{\{u_f>\mu_f\}}.$
\end{enumerate}
A \textbf{critical set} is a subset $E$ of $\O$ such that $\mathds 1_E\in \mathcal F(V_0)$ and such that $\mathds 1_E$ is a critical point of $\mathscr E$.
\end{definition}

It is natural to expect that the algorithm will converge to a local minimiser of the functional. In general, one must clarify the meaning behind ``local minimiser"; more than mere local minimisers we use the notion of ``stable local minimisers". Our definition here is the following:
\begin{definition}\label{De:LocalMinimiserDirichlet}
An admissible control $f^*\in \mathcal F(V_0)$ is called a stable local minimiser of $\mathscr E$ in $\mathcal F(V_0)$ if there exist $C=C(f^*)\,, \delta=\delta(f^*)>0$ such that 
\[ \forall f\in \mathcal F(V_0)\,, \Vert f-f^*\Vert_{L^1(\O)}\leq \delta(f^*)\Rightarrow \mathscr E(f)\geq \mathscr E(f^*)+C(f^*)\Vert f-f^*\Vert_{L^1(\O)}^2.\]
\end{definition}

\paragraph{Main result}
Our main result is the following:
\begin{theorem}\label{Th:Dirichlet}
There exists $\e>0$ such that, if $1-\e\leq V_0<1$, for any initialisation $f_0\in \mathcal F(V_0)$, the sequence $\{f_k\}_{k\in \N}$ generated by Algorithm \ref{Algo:Dirichlet} converges strongly in $L^1(\O)$ to a stable local minimiser of $\mathscr E$ in $\mathcal F(V_0)$.
\end{theorem}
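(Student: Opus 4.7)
The proof plan rests on four ingredients, only the third of which is genuinely new.

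The first step is monotone energy decay with an explicit remainder. Since $f\mapsto u_f$ is linear, $\mathscr E(f)=-\tfrac{1}{2}\int_\Omega fu_f$ is a quadratic function on $\mathcal F(V_0)$, and a direct expansion gives, for any admissible perturbation $h$,
\[ \mathscr E(f+h)=\mathscr E(f)-\int_\Omega h\,u_f-\tfrac{1}{2}\|\nabla u_h\|_{L^2}^2.\]
Applied to $f=f_k$ and $h=f_{k+1}-f_k$, and using that by a bathtub-principle argument $f_{k+1}=\mathds 1_{\{u_{f_k}>c_k\}}$ maximises $\psi\mapsto\int_\Omega\psi\,u_{f_k}$ over $\mathcal F(V_0)$, one obtains the key identity
\[ \mathscr E(f_k)-\mathscr E(f_{k+1})=\int_\Omega|u_{f_k}-c_k|\,|f_{k+1}-f_k|+\tfrac{1}{2}\|\nabla u_{f_{k+1}-f_k}\|_{L^2}^2\geq 0.\]
Boundedness from below of $\mathscr E$ makes the telescoped right-hand side finite, and both nonnegative contributions tend to zero.

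The second step extracts a subsequential limit and identifies it as a critical point in the sense of Definition \ref{De:CriticalPoint}. Since $(f_k)$ is bounded in $L^\infty$, a subsequence $(f_{k_j})$ converges weakly-$*$ to some $f^\infty\in\mathcal F(V_0)$ and, by elliptic regularity, $u_{f_{k_j}}\to u_{f^\infty}$ in $\mathscr C^1(\bar\Omega)$; the Lagrange multipliers $c_{k_j}$ are uniformly bounded and may be assumed to converge to some $c^\infty$. The subtle point is passing to the limit in the indicator $\mathds 1_{\{u_{f_{k_j}}>c_{k_j}\}}$, which requires the level set $\{u_{f^\infty}=c^\infty\}$ to have zero Lebesgue measure. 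This is exactly where $V_0\to 1$ plays its role: the critical level $c^\infty$ is then close to $0$, the complementary set $\{u_{f^\infty}\leq c^\infty\}$ concentrates in a thin tubular neighbourhood of $\partial\Omega$, and Hopf's boundary lemma provides a uniform lower bound $|\nabla u_{f^\infty}|\geq c_0>0$ there. Hence $f^\infty=\mathds 1_{\{u_{f^\infty}>c^\infty\}}$ is a critical point, and the same nondegeneracy $|\nabla u_{f_k}|\geq c_0/2$ holds uniformly near $\{u_{f_k}=c_k\}$ for large $k$.

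The third step, where the bulk of the difficulty lies, is the quantitative stability estimate promised by the paper's new diagonalisation of shape hessians: for $V_0$ close enough to $1$, every critical point $f^*$ satisfies
\[ \mathscr E(f)\geq\mathscr E(f^*)+C\|f-f^*\|_{L^1(\Omega)}^2\quad\text{for all }f\in\mathcal F(V_0),\ \|f-f^*\|_{L^1}\leq\delta.\]
The strategy is to parametrise admissible variations of $f^*=\mathds 1_{E^*}$ by a normal-velocity field on $\partial E^*$ together with a Lagrange correction enforcing the volume constraint, and to decompose the shape Hessian of $\mathscr E$ as the sum of a positive $L^1$-coercive piece (produced by the strict nondegeneracy $|\nabla u_{f^*}|\geq c_0$ on $\partial E^*$) and a negative $H^1$-piece $-\tfrac{1}{2}\|\nabla u_h\|_{L^2}^2$ which, via elliptic estimates, is controlled by the former only when the support $\{f^*=0\}$ is a thin layer, i.e.\ when $V_0$ is sufficiently close to $1$. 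This is the main obstacle: the second variation is not a priori definite in general, and the whole point of the diagonalisation is to make the positive piece dominate in the large-volume regime.

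With these ingredients the conclusion is quick. The coarea-type inequality $\int_\Omega|u_{f_k}-c_k|\,|f_{k+1}-f_k|\geq c\,\|f_{k+1}-f_k\|_{L^1}^2$, valid for large $k$ thanks to the nondegeneracy of Step~2, combined with the decay of Step~1, forces $\|f_{k+1}-f_k\|_{L^1}\to 0$; in particular every cluster point coincides with a single critical point $f^\infty$, which by Step~3 is a stable local minimiser. The stability estimate then yields
\[ \|f_k-f^\infty\|_{L^1}^2\lesssim\mathscr E(f_k)-\mathscr E(f^\infty)\longrightarrow 0,\]
proving strong $L^1$ convergence of the full sequence $(f_k)$ to $f^\infty$.
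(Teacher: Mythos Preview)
Your overall strategy matches the paper's: energy decay via a quantitative bathtub-type bound, identification of cluster points as critical via Hopf nondegeneracy near $\partial\Omega$, and coercivity of the shape hessian at critical points for $V_0$ close to $1$. Your exact quadratic expansion in Step~1 is a pleasant simplification specific to the Dirichlet energy (the paper uses only concavity, which transfers to the eigenvalue and non-energetic cases), and your ``coarea-type inequality'' is precisely the quantitative bathtub principle the paper invokes.

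There is, however, a genuine gap in the concluding paragraph. The implication ``$\|f_{k+1}-f_k\|_{L^1}\to 0$, hence every cluster point coincides with a single critical point $f^\infty$'' is false as stated: a sequence with vanishing increments can have a continuum of cluster points (a slow walk back and forth on an interval is the standard counterexample). The paper closes this gap in two steps. First, from $\|f_{k+1}-f_k\|\to 0$ and the bang-bang nature of cluster points it proves a dichotomy (Lemma~\ref{Le:MultipleClosurePoints}): either one cluster point or infinitely many. Second, it deduces from the shape hessian coercivity that critical points are \emph{isolated} in $L^1$ (Lemma~\ref{Le:CriticalIsolated}), ruling out the second alternative. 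Your final line $\|f_k-f^\infty\|^2\lesssim\mathscr E(f_k)-\mathscr E(f^\infty)$ is circular as written, since the stability estimate only applies once you already know $f_k\in B_{L^1}(f^\infty,\delta)$. You can repair this with a trapping argument: since $\mathscr E(f_k)\searrow\mathscr E(f^\infty)$ and the increments are eventually smaller than $\delta/2$, once the sequence enters $B(f^\infty,\delta/2)$ along the subsequence it cannot cross the annulus $\{\delta/2\le\|f-f^\infty\|_{L^1}\le\delta\}$, on which $\mathscr E\ge\mathscr E(f^\infty)+C(\delta/2)^2$. Either fix is short, but one of them must be supplied.

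A minor point: the well-posedness of the algorithm (existence of $c_k$ with $\mathrm{Vol}(\{u_{f_k}=c_k\})=0$) must hold for \emph{every} $k$, not only for large $k$ along a subsequence. The Hopf argument you sketch in Step~2 in fact gives $|\nabla u_f|\ge c_0>0$ in a tubular neighbourhood of $\partial\Omega$ uniformly over all $f\in\mathcal F(V_0)$ whenever $V_0$ is close to $1$ (this is the content of Lemma~\ref{Le:V0Lagrange}); it should be stated before Step~1 so that the iterates are defined and the coarea inequality holds with a uniform constant from the start.
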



\subsection{Optimisation of weighted eigenvalues}\label{Se:IntroEigenvalue}
\paragraph{The energy functional and the optimal control problem.}
The second functional under consideration is the principal eigenvalue of a Dirichlet operator. For any $f\in \mathcal F(V_0)$, let $\lambda(f)$ be the first eigenvalue of the operator $-\Delta-f$ endowed with Dirichlet boundary conditions. The variational formulation of $\lambda(f)$ is
\begin{equation}\label{Eq:Rayleigh}
\lambda(f)=\min_{u\in W^{1,2}_0(\O)\backslash\{0\}}\frac{\int_\O|\n u|^2-\int_\O fu^2 }{\int_\O u^2}.\end{equation} It is standard to see that this eigenvalue is simple and that any associated eigenfunction has constant sign. Thus, up to normalisation, the eigenpair $(\eta_f,\lambda(f))$ satisfies
\begin{equation}
\label{Eq:Eigenpair}
\begin{cases}
-\Delta \eta_f=\lambda(f)\eta_f+f\eta_f&\text{ in }\O\,, 
\\ \eta_f\in W^{1,2}_0(\O)\,, 
\\ \eta_f>0&\text{ in }\O\,, 
\\ \int_\O \eta_f^2=1.
\end{cases}
\end{equation}
The optimal control problem under consideration is 
\begin{equation}\label{Pv:Eigenvalue}\tag{$\bold{P}_{\mathrm{Eigen}}$}
\fbox{$\displaystyle \min_{f\in \mathcal F(V_0)}\lambda(f).
$}
\end{equation}
This problem has been studied at length, see \cite[Chapter 8]{henrot2006} or the more recent \cite{LLNP}. Part of the recent interest in it is due to its applications in spatial ecology, as the sign of $\lambda(f)$ predicts extinction or survival of a species, see \cite[Introduction]{MazariThese} and the references therein. The thresholding algorithm was applied to this problem in \cite{HintermullerKaoLaurain,KaoLouYanagida,LLNP}.

Similar to Lemma \ref{Le:BasicDirichlet}, we have the following result.

\begin{lemma}\label{Le:BasicEigenv}
The problem \eqref{Pv:Eigenvalue} has a solution. Furthermore, the map $\lambda$ is strictly concave. In particular any solution $f^*$ of \eqref{Pv:Eigenvalue} is a bang-bang function in the sense of Definition \ref{De:BangBang}: there exists $E^*\subset \O$ such that $f^*=\mathds 1_{E^*}$, with $\mathrm{Vol}(E^*)=V_0\mathrm{Vol}(\O).$ 
\end{lemma}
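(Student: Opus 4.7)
The structure of the argument will mirror Lemma \ref{Le:BasicDirichlet}: establish existence by the direct method, prove strict concavity of $\lambda$ on $\mathcal F(V_0)$, and then deduce the bang-bang property from strict concavity on a convex weakly-$*$ compact set.

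For existence, I would take a minimising sequence $\{f_n\} \subset \mathcal F(V_0)$. Since $\mathcal F(V_0)$ is convex and weakly-$*$ closed in $L^\infty(\O)$ (cited from Henrot--Pierre) and bounded, up to extraction $f_n \rightharpoonup^* f^\ast \in \mathcal F(V_0)$. The key continuity property is that $\lambda$ is continuous for the weak-$*$ topology of $L^\infty$. Indeed, testing \eqref{Eq:Rayleigh} against a fixed admissible $u$ shows $\lambda(f_n)$ is bounded; testing the eigenequation \eqref{Eq:Eigenpair} against $\eta_{f_n}$ itself gives a uniform $W^{1,2}_0$-bound on $\eta_{f_n}$. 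Extract a weak $W^{1,2}_0$ / strong $L^2$ limit $\eta$. Strong $L^2$ convergence makes $\eta_{f_n}^2 \to \eta^2$ in $L^1$, which combined with weak-$*$ convergence of $f_n$ lets me pass to the limit in $\int_\O f_n \eta_{f_n}^2$ and in the weak formulation of \eqref{Eq:Eigenpair}. Simplicity of the first eigenvalue and positivity of $\eta$ then identify the limit as $(\eta_{f^\ast},\lambda(f^\ast))$ and yield $\lambda(f_n)\to\lambda(f^\ast)$.

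For concavity, the Rayleigh formula \eqref{Eq:Rayleigh} exhibits $\lambda$ as the infimum over $u$ of maps $f \mapsto R_f(u) = (\int|\nabla u|^2 - \int f u^2)/\int u^2$ that are affine in $f$; hence $\lambda$ is concave. To upgrade to strict concavity, suppose $f_0 \neq f_1$ in $\mathcal F(V_0)$, $t\in(0,1)$, and let $\eta$ denote the normalised eigenfunction of $f_t := tf_1+(1-t)f_0$. The identity
\[
\lambda(f_t) = R_{f_t}(\eta) = t R_{f_1}(\eta) + (1-t) R_{f_0}(\eta) \geq t\lambda(f_1) + (1-t)\lambda(f_0)
\]
is an equality only if $\eta$ minimises both $R_{f_0}$ and $R_{f_1}$, i.e. $\eta = \eta_{f_0} = \eta_{f_1}$ by uniqueness of normalised positive eigenfunctions. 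Subtracting the two eigenequations in \eqref{Eq:Eigenpair} would then give $(f_1 - f_0)\eta = (\lambda(f_0) - \lambda(f_1))\eta$; dividing by $\eta>0$ forces $f_1-f_0$ to be constant in $\O$, and the common mean value $V_0$ forces that constant to vanish, contradicting $f_0\neq f_1$.

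Finally, the bang-bang property follows from a standard extreme-point argument: if a minimiser $f^\ast$ were not extreme, I could write $f^\ast = tf_1 + (1-t)f_0$ with $f_0\neq f_1$ in $\mathcal F(V_0)$, and strict concavity would give $\lambda(f^\ast) > t\lambda(f_1)+(1-t)\lambda(f_0)\geq \min\{\lambda(f_0),\lambda(f_1)\}$, contradicting minimality. Hence $f^\ast \in \mathrm{Extr}(\mathcal F(V_0))$, which by the characterisation recalled in Section \ref{Se:IntroPreliminaire} means $f^\ast = \mathds 1_{E^\ast}$ with $\mathrm{Vol}(E^\ast)=V_0\mathrm{Vol}(\O)$. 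The main technical nuisance, as expected, is the continuity of $\lambda$ under weak-$*$ convergence of the potential — there one genuinely needs the compact Sobolev embedding to pass to the limit in the quadratic coupling $\int f_n \eta_{f_n}^2$; all subsequent steps are soft consequences of the Rayleigh formula.
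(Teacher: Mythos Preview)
Your argument is correct. The paper does not actually prove this lemma in the body of the text; it is introduced with the words ``Similar to Lemma \ref{Le:BasicDirichlet}, we have the following result'' and Lemma \ref{Le:BasicDirichlet} itself is qualified as ``standard'' and left unproved, with a pointer to \cite[Chapter 8]{henrot2006} for background on the eigenvalue problem. So there is no in-paper proof to compare yours against.

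That said, the route you chose is exactly the canonical one and is the argument implicitly invoked by the paper: weak-$*$ compactness of $\mathcal F(V_0)$ together with continuity of $\lambda$ (obtained via compactness of the eigenfunctions in $L^2$ to pass to the limit in the coupling term $\int_\O f_n\eta_{f_n}^2$) for existence; the Rayleigh formulation \eqref{Eq:Rayleigh} as an infimum of affine functionals for concavity; the rigidity step using simplicity of the first eigenfunction and the constraint $\fint_\O f=V_0$ for strict concavity; and the extreme-point characterisation of $\mathrm{Extr}(\mathcal F(V_0))$ for the bang-bang conclusion. One small remark on the existence step: since $\lambda$, as an infimum of weak-$*$ continuous affine maps, is only automatically \emph{upper} semicontinuous, the compactness argument you give is genuinely needed to obtain the lower semicontinuity required for a minimisation problem; you were right to flag this as the one nontrivial point.
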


\paragraph{The switch function and the thresholding algorithm.}
Let us now give the switch function of \eqref{Pv:Eigenvalue}.
\begin{lemma}\label{Le:SwitchEigenvalue}
The map $f\mapsto \lambda(f)$ is Fr\'echet differentiable at any $f\in \mathcal F(V_0)$ and, for any $f\in \mathcal F(V_0)$, for any admissible perturbation $h$ at $f$, there holds
\[ \dot{\lambda}(f)[h]=-\int_\O h \eta_f^2.\]
\end{lemma}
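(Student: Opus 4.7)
The plan is to establish Fréchet differentiability through the implicit function theorem (IFT) and then to compute the derivative by differentiating the eigenvalue equation against $\eta_f$.

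First, to set up the IFT, I fix $f_0 \in \mathcal{F}(V_0)$ and consider the map
\[
\Phi : L^\infty(\Omega) \times W^{1,2}_0(\Omega) \times \R \longrightarrow W^{-1,2}(\Omega) \times \R,
\qquad
\Phi(f,u,\lambda) = \bigl(-\Delta u - (\lambda + f)u,\ \textstyle\int_\Omega u^2 - 1\bigr).
\]
The point $(f_0,\eta_{f_0},\lambda(f_0))$ is a zero of $\Phi$. To apply the IFT, I must check that the partial derivative of $\Phi$ with respect to $(u,\lambda)$, namely
\[
(v,\mu) \longmapsto \bigl(-\Delta v - (\lambda(f_0)+f_0)v - \mu\,\eta_{f_0},\ 2\!\int_\Omega v\,\eta_{f_0}\bigr),
\]
is a topological isomorphism. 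Because $\lambda(f_0)$ is a simple eigenvalue of $-\Delta - f_0$ (a standard Krein--Rutman consequence of positivity of $\eta_{f_0}$), the Fredholm alternative yields: given $(g,\alpha)$, the equation $-\Delta v - (\lambda(f_0)+f_0)v = g + \mu\eta_{f_0}$ is solvable iff $\mu = -\int_\Omega g\,\eta_{f_0}$ (using $\int_\Omega \eta_{f_0}^2 = 1$), and then $v$ is determined uniquely up to a multiple of $\eta_{f_0}$, which is then fixed by the normalisation condition $2\int_\Omega v\,\eta_{f_0} = \alpha$. This gives invertibility, and the IFT provides $\mathscr C^1$ dependence $f \mapsto (\eta_f,\lambda(f))$ from an $L^\infty$-neighbourhood of $f_0$ into $W^{1,2}_0(\Omega) \times \R$. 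In particular $\lambda$ is Fréchet differentiable at $f_0$.

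Next, to identify the derivative, I pick an admissible perturbation $h$ at $f_0$ and set $\dot\lambda := \dot\lambda(f_0)[h]$, $\dot\eta := \dot\eta_{f_0}[h]$. Differentiating \eqref{Eq:Eigenpair} in the direction $h$ gives
\[
-\Delta \dot\eta = \dot\lambda\, \eta_{f_0} + \lambda(f_0)\,\dot\eta + h\,\eta_{f_0} + f_0\,\dot\eta,
\qquad
\int_\Omega \dot\eta\, \eta_{f_0} = 0.
\]
Multiplying the first identity by $\eta_{f_0}$, integrating over $\Omega$ and applying Green's formula (all boundary terms vanish since $\eta_{f_0},\dot\eta \in W^{1,2}_0(\Omega)$) yields
\[
\int_\Omega \dot\eta(-\Delta \eta_{f_0}) = \dot\lambda \int_\Omega \eta_{f_0}^2 + \lambda(f_0)\int_\Omega \dot\eta\, \eta_{f_0} + \int_\Omega h\,\eta_{f_0}^2 + \int_\Omega f_0\,\dot\eta\,\eta_{f_0}.
\]
Using $-\Delta \eta_{f_0} = (\lambda(f_0)+f_0)\eta_{f_0}$ on the left-hand side, the terms containing $\dot\eta$ cancel, and since $\int_\Omega \eta_{f_0}^2 = 1$ I obtain $\dot\lambda = -\int_\Omega h\, \eta_{f_0}^2$, as claimed.

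The only delicate point is the invertibility step for the IFT, and this is precisely what the simplicity of $\lambda(f_0)$ buys; everything else is a routine computation, essentially identical to the Hadamard formula for simple eigenvalues. Extending the formula from $f_0$ to an arbitrary $f \in \mathcal F(V_0)$ is done verbatim, since each $f$ produces a simple principal eigenvalue with positive eigenfunction.
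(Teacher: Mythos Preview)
Your proof is correct and follows the standard Hadamard perturbation argument for simple eigenvalues: invoke the implicit function theorem (using simplicity of $\lambda(f)$ for the invertibility of the linearisation) to obtain Fr\'echet differentiability, then test the differentiated eigenvalue equation against $\eta_f$ to identify the derivative. The paper itself does not supply a proof of this lemma, treating it as a well-known result, so there is nothing to compare against; your argument is exactly the canonical one that would be cited.
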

Consequently, for the Dirichlet eigenvalue, the switch function is $-\eta_f^2$. Note that as $\eta_f>0$,  computing a level-set of $\eta_f$ is the same as computing a level-set of $\eta_f^2$. This allows to describe the thresholding algorithm for \eqref{Pv:Eigenvalue}.
\begin{algorithm}[H]
\caption{Thresholding algorithm for the Dirichlet eigenvalue}\label{Algo:Eigenvalue}
\begin{algorithmic}[1]
\State Initialisation at $f_0\in \mathcal F$
\State $k\gets 0$
\State Compute $\eta_{f_k}$
\State Compute $c_k$ such that $\mathrm{Vol}(\{\eta_{f_k}>c_k\})=V_0\mathrm{Vol}(\O)$.
\State $f_k\gets \mathds 1_{\{\eta_{f_k}>c_k\}}$
\State $k\gets k+1$.
\end{algorithmic}
\end{algorithm}

Remark \ref{Re:DirichletWell?} about the well-posedness of the algorithm applies here as well.

 To alleviate the presentation, we give the precise definitions of  ``critical point" and of ``local minimisers" at the beginning of the proof; they are identical to Definitions \ref{De:CriticalPoint}-\ref{De:LocalMinimiserDirichlet}, up to replacing  $\mathscr E$ with $\lambda$.

\paragraph{Main result}
The main theorem is the following:

\begin{theorem}\label{Th:Eigenvalue}
There exists $\e>0$ such that, if $1-\e\leq V_0<1$, for any initialisation $f_0\in \mathcal F(V_0)$, the sequence $\{f_k\}_{k\in \N}$ generated by Algorithm \ref{Algo:Eigenvalue} converges strongly in $L^1(\O)$ to a local minimiser of $\lambda$ in $\mathcal F(V_0)$.
\end{theorem}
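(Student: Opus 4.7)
The plan is to mirror the proof of Theorem \ref{Th:Dirichlet} almost line by line, replacing $\mathscr E$ with $\lambda$ and $u_f$ with $\eta_f^2$ as the switch function. Three ingredients must be assembled: (i) a monotonic descent property along the iterates, driven by the concavity of $\lambda$ and the greedy choice in Algorithm \ref{Algo:Eigenvalue}; (ii) a local quantitative stability estimate around any stable local minimiser $f^{*}$, obtained by diagonalising the shape hessian of $\lambda$ for $V_0$ close to $1$; and (iii) well-posedness of the level sets that define $f_{k+1}$. Step (iii) follows from the unique continuation principle for the Schrödinger operator $-\Delta-f$: for any $f\in\mathcal F(V_0)$, the positive principal eigenfunction $\eta_f$ is real-analytic in $\{f<1\}$ and, more importantly, its level sets have zero Lebesgue measure, so that the threshold $c_k$ is uniquely defined and $\mathrm{Vol}(\{\eta_{f_k}=c_k\})=0$.

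For the descent step, I would use that by Lemma \ref{Le:BasicEigenv} (strict concavity of $\lambda$) we have
\[
\lambda(f_{k+1})\leq \lambda(f_k)+\dot\lambda(f_k)[f_{k+1}-f_k]=\lambda(f_k)-\int_\O(f_{k+1}-f_k)\,\eta_{f_k}^2.
\]
By the bathtub principle, $f_{k+1}=\mathds 1_{\{\eta_{f_k}>c_k\}}$ is exactly the maximiser of $f\mapsto\int_\O f\,\eta_{f_k}^2$ over $\mathcal F(V_0)$, so the last integral is non-negative and $\lambda(f_{k+1})\leq\lambda(f_k)$. A quantitative version of concavity (which can be obtained by integrating the hessian along the segment $[f_k,f_{k+1}]$) upgrades this to $\lambda(f_k)-\lambda(f_{k+1})\geq c\,\|f_{k+1}-f_k\|_{L^2}^2$, whence the series $\sum_k\|f_{k+1}-f_k\|_{L^2}^2$ is summable and, up to subsequences, $f_{k}$ weakly-$*$ accumulates to some $f_\infty\in\mathcal F(V_0)$ satisfying $\|f_{k+1}-f_k\|_{L^2}\to 0$. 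Strong continuity of $f\mapsto \eta_f$ from the weak-$*$ topology of $\mathcal F(V_0)$ to $W^{1,2}_0(\O)$ then allows us to pass to the limit in the characterisation $f_{k+1}=\mathds 1_{\{\eta_{f_k}>c_k\}}$ and conclude that $f_\infty$ is a critical point in the sense analogous to Definition \ref{De:CriticalPoint}.

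The main analytic input is the local stability estimate, which amounts to the diagonalisation of the shape hessian of $\lambda$ at a critical bang-bang point $f^{*}=\mathds 1_{E^{*}}$. Writing the second-order expansion of $\lambda$ along an admissible perturbation $h$ (with $\int_\O h=0$ and $h$ supported appropriately so that $f^{*}+\e h\in\mathcal F(V_0)$), one finds
\[
\ddot\lambda(f^{*})[h,h]=-2\int_\O h\,\eta_{f^{*}}\dot\eta_{f^{*}}[h],
\]
where $\dot\eta_{f^{*}}[h]$ solves the linearised eigenvalue problem with source $h\eta_{f^{*}}$ and vanishing projection on $\eta_{f^{*}}$. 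Using self-adjointness of $-\Delta-f^{*}$ and the spectral expansion of this linearised equation, the hessian can be recast as a sum of squared spectral coefficients, weighted by the gaps $\lambda_j(f^{*})-\lambda(f^{*})>0$, i.e.\ as an explicitly positive bilinear form on perturbations with $\int_\O h=0$. In the large-volume regime $V_0\to 1^-$, $f^{*}$ converges to $\mathds 1_\O$ and $\eta_{f^{*}}$ to the Dirichlet eigenfunction of $-\Delta-1$, so the spectral gap is uniformly bounded below and one obtains $\ddot\lambda(f^{*})[h,h]\leq -c\|h\|_{L^2}^2$ uniformly, which translates (via an argument identical to the one used for $\mathscr E$) to the local coercivity estimate
\[
\lambda(f)\leq \lambda(f^{*})-C\|f-f^{*}\|_{L^1}^2\qquad\text{for }\|f-f^{*}\|_{L^1}\leq\delta,
\]
and hence to the analogue of Definition \ref{De:LocalMinimiserDirichlet}. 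Combined with the summability $\sum\|f_{k+1}-f_k\|_{L^2}^2<\infty$, this stability estimate forces the whole sequence $f_k$ to converge strongly in $L^1$ to $f_\infty$ and identifies $f_\infty$ as a stable local minimiser.

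The step I expect to be the main obstacle is the diagonalisation of the shape hessian in the eigenvalue case. Compared to the Dirichlet energy, $\lambda$ depends on $f$ both through the operator and through the normalised eigenfunction, so the second derivative contains additional cross terms and cannot be read off directly from an adjoint state. Controlling these terms uniformly in $V_0\to 1^-$, and verifying that the spectral gap $\lambda_2(f^{*})-\lambda(f^{*})$ remains bounded below independently of the critical point $f^{*}$, is the technical heart of the argument. Once this is done, the remaining steps (monotonic descent, Cauchy-type control of iterates, identification of limits as critical points, and well-posedness of the level sets via unique continuation) follow by the same arguments as in the proof of Theorem \ref{Th:Dirichlet}.
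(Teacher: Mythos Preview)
Your step (ii) contains a conceptual gap that cannot be repaired along the lines you suggest. The Gâteaux second derivative $\ddot\lambda(f^*)[h,h]=-2\sum_{j\geq 1}a_j^2/(\lambda_j(f^*)-\lambda(f^*))$ is \emph{non-positive for every} $h$; this is precisely the concavity of $\lambda$ (Lemma~\ref{Le:BasicEigenv}). A non-positive Hessian can never yield local \emph{minimality}: your displayed inequality $\lambda(f)\leq\lambda(f^*)-C\|f-f^*\|_{L^1}^2$ says that $f^*$ is a strict local \emph{maximiser}, the opposite of what is claimed. Even as a bound on $|\ddot\lambda|$, the spectral gap $\lambda_2(f^*)-\lambda(f^*)$ goes in the wrong direction: it gives an \emph{upper} bound $|\ddot\lambda[h,h]|\leq \tfrac{2}{\lambda_2-\lambda}\|h\eta_{f^*}\|_{L^2}^2$, not the uniform lower bound $-\ddot\lambda[h,h]\geq c\|h\|_{L^2}^2$ you assert (the gaps $\lambda_j-\lambda$ diverge, so $|\ddot\lambda|$ can be arbitrarily small relative to $\|h\|$). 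For the same reason, the ``quantitative concavity'' you invoke in step (i) does not hold, and the summability $\sum_k\|f_{k+1}-f_k\|^2<\infty$ cannot be obtained that way either.

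The paper's route is different on both points. For the descent estimate, the inequality $\lambda(f_k)-\lambda(f_{k+1})\geq c\|f_{k+1}-f_k\|_{L^1}^2$ comes from concavity combined with the \emph{quantitative bathtub principle} (the analogue of Proposition~\ref{Pr:QuantitativeBathtub}) applied to $\eta_{f_k}^2$; this requires a uniform lower bound on $|\nabla\eta_f|$ along the relevant level set, obtained from the Hopf lemma for the limit eigenfunction as $V_0\to 1$ (the analogue of Lemma~\ref{Le:V0Lagrange}). For local minimality, one works not with the Gâteaux Hessian but with the \emph{shape} Hessian of the Lagrangian $\mathscr L_{E^*}=\lambda+\mu_{E^*}\mathrm{Vol}$ at a critical shape $E^*$. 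This Hessian contains, in addition to a negative term inherited from $\ddot\lambda$, a positive term $-\int_{\partial E^*}\partial_\nu(\eta_{f^*}^2)\langle\Phi,\nu\rangle^2$; the competition between the two is diagonalised via a transmission-type Steklov eigenvalue problem on $\partial E^*$ (the analogue of \eqref{Eq:EllDirichletSteklov}), and the first eigenvalue of that problem is shown to blow up as $V_0\to 1$, yielding $\mathscr L_{E^*}''(E^*)[\Phi,\Phi]\geq c\|\langle\Phi,\nu\rangle\|_{L^2(\partial E^*)}^2$. The spectrum of $-\Delta-f^*$ on $\O$ plays no role in this argument.
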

The proof is very similar to that of Theorem \ref{Th:Dirichlet}; it is given in \cite[Section A]{CMFPSP}.

We conclude this section with an illustration of the thresholding algorithm~\ref{Algo:Eigenvalue} for optimizing the functional $\lambda$. In Figures~\ref{FigDir03} and \ref{FigDir08}, we show the optimal domain, as well as the evolution of the quantity $\Vert f_{k+1}-f_k\Vert_{L^1(\Omega)}$, using the same notation as in algorithm~\ref{Algo:Eigenvalue}, which is used as a stopping criterion for the algorithm, as a function of the iterations. We observe that the number of iterations required for the stopping criterion to be below the fixed tolerance (in this case, $10^{-6}$ for these figures) is very small. The numerical results also suggest that the statement of Theorem~\ref{Th:Eigenvalue} is likely to be valid for large values of $\varepsilon$ close to 1.
 	\begin{figure}[h!]
	\centering
	\includegraphics[height=5cm]{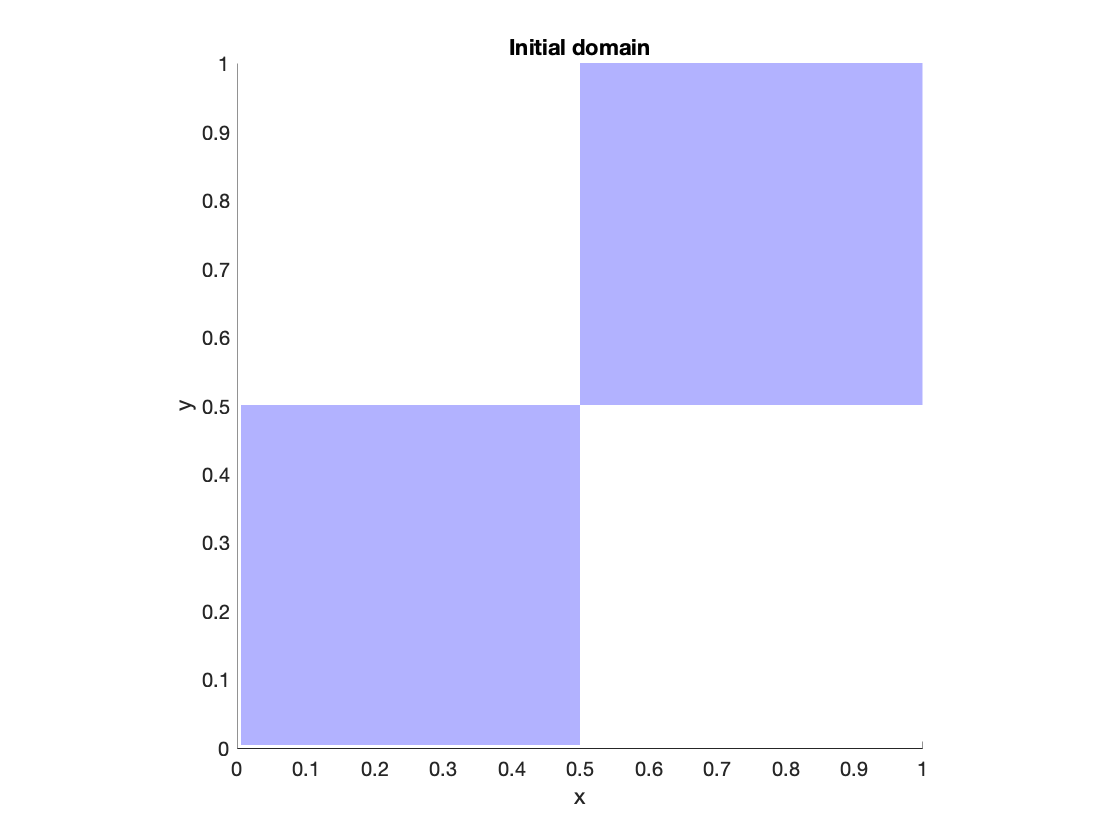}
	\caption{Domain $E_0$ chosen to initialize algorithm~\ref{Algo:Eigenvalue}, i.e., $f_0=\mathds{1}_{E_0}$.. \label{FigDir03}}
	\end{figure}
	\begin{figure}[h!]
	\centering
	\includegraphics[height=5cm]{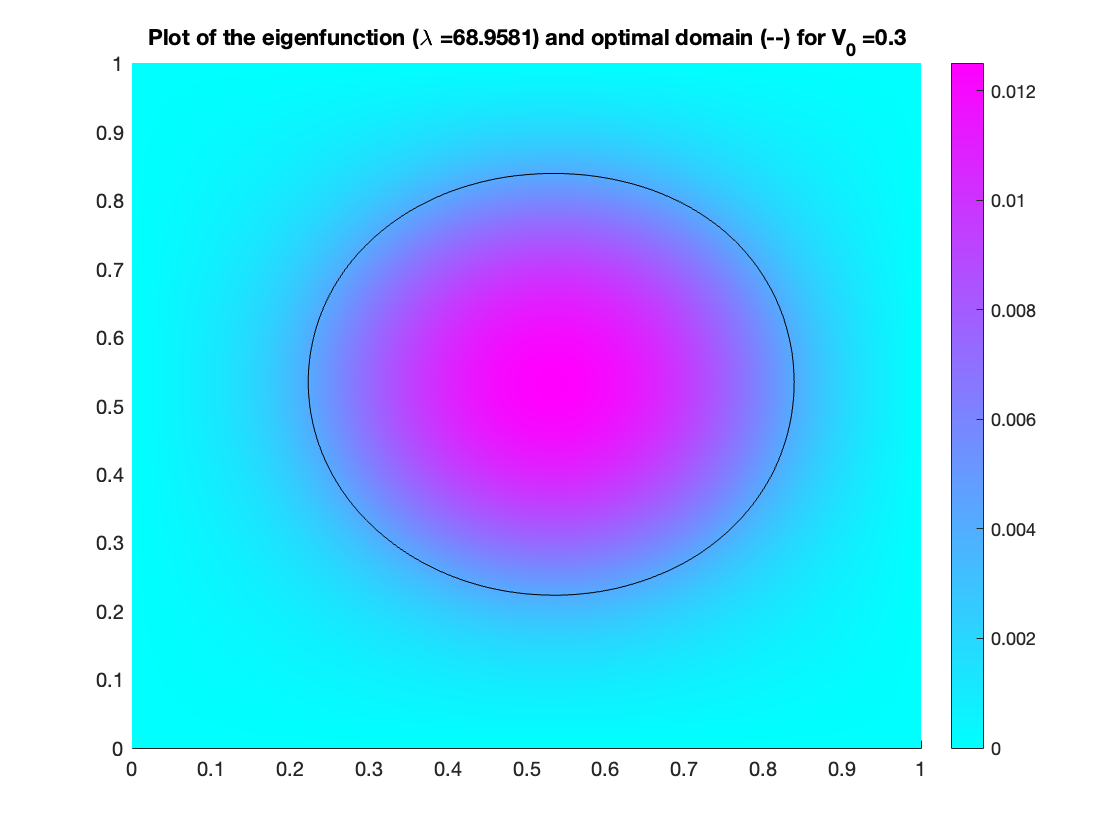}
	\includegraphics[height=5cm]{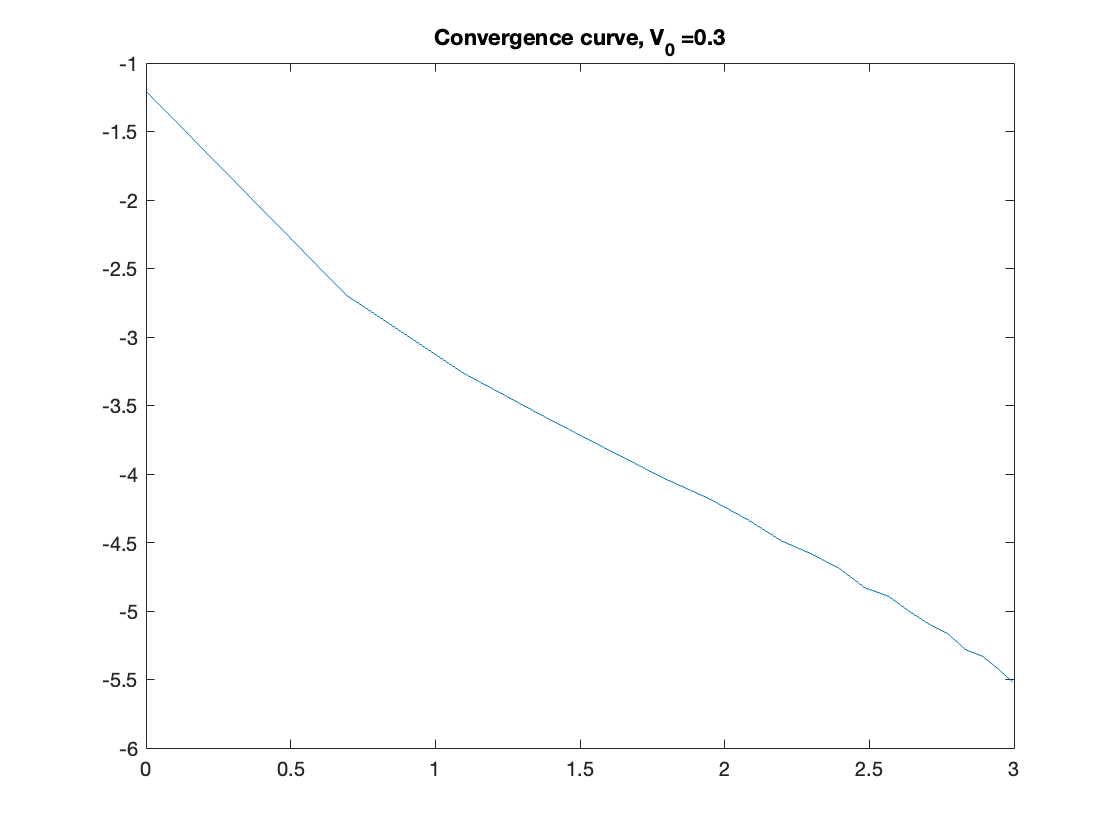}
	\caption{$V_0=0.3$. Left: representation of the optimal domain (black contour) and the eigenfunction (warmer colors indicate where the function takes its maximum values). Right: evolution of the quantity $\Vert f_{k+1}-f_k\Vert_{L^1(\Omega)}$ as a function of iterations, on a logarithmic scale. \label{FigDir03}}
	\end{figure}
	\begin{figure}[h!]
	\centering
	\includegraphics[height=5cm]{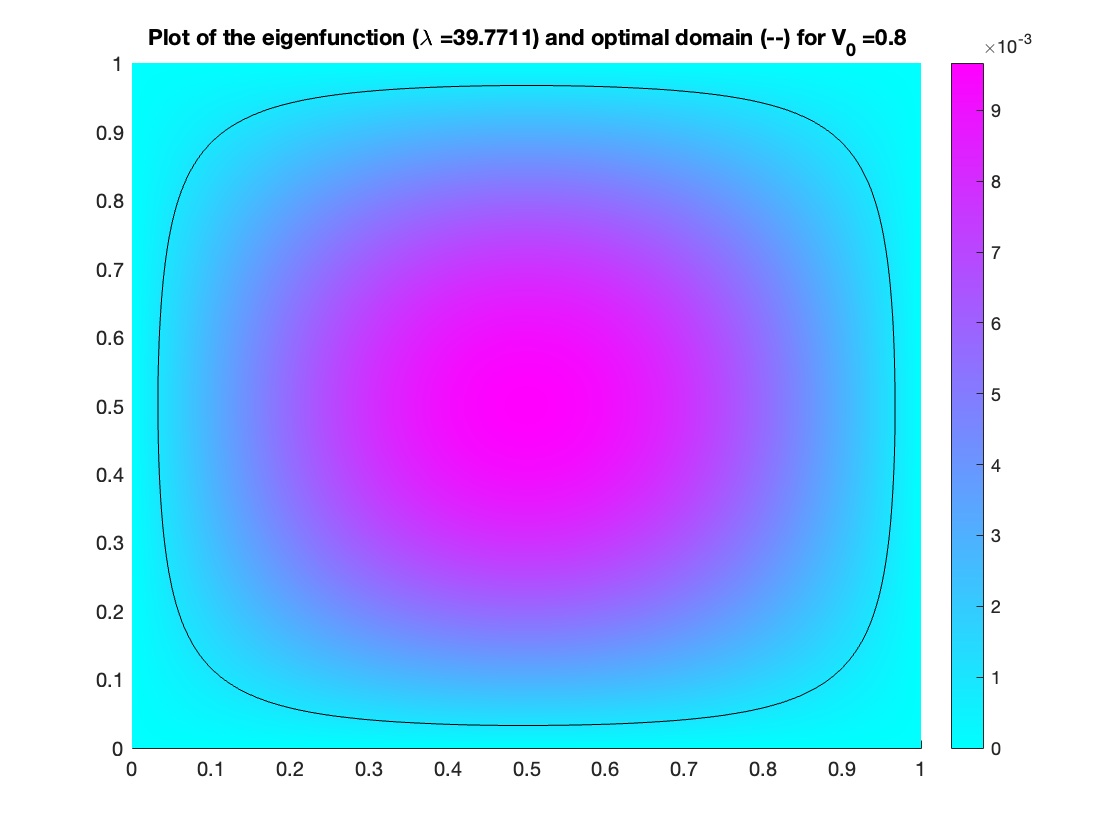}
	\includegraphics[height=5cm]{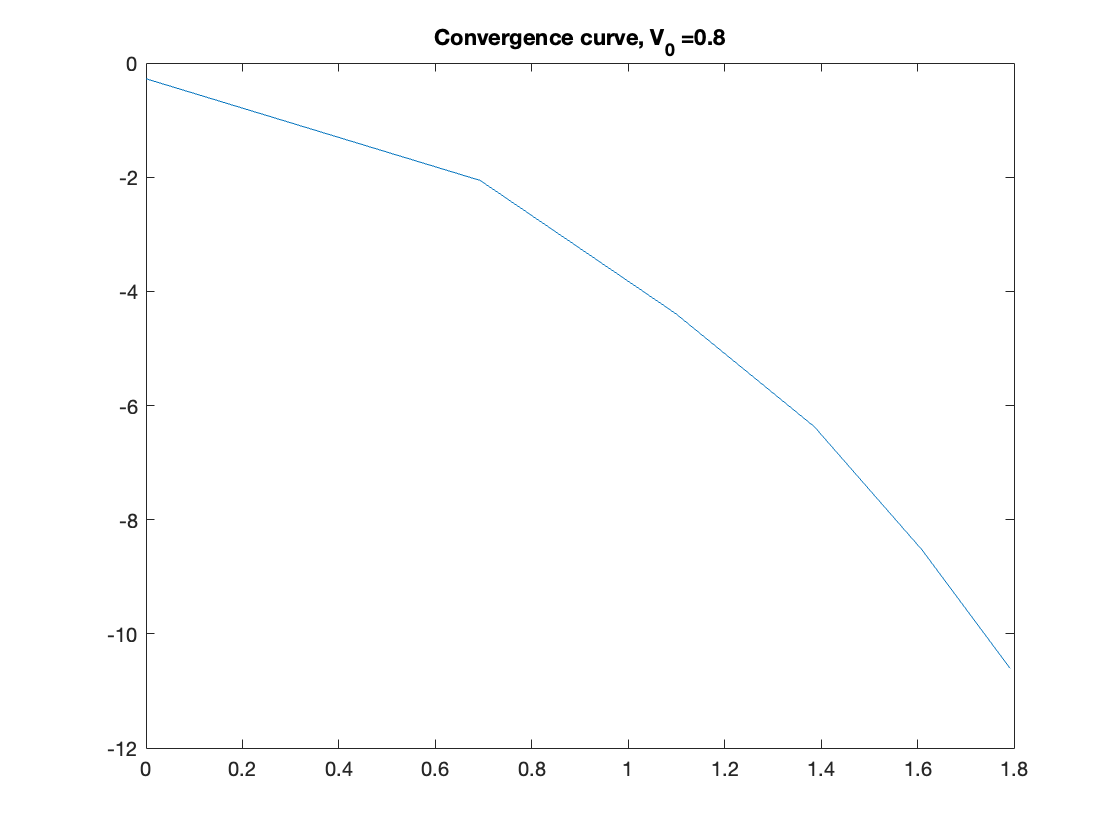}
	\caption{$V_0=0.8$. Left: representation of the optimal domain (black contour) and the eigenfunction (warmer colors indicate where the function takes its maximum values). Right: evolution of the quantity $\Vert f_{k+1}-f_k\Vert_{L^1(\Omega)}$ as a function of iterations, on a logarithmic scale. \label{FigDir08}}
	\end{figure}

\subsection{Optimisation of non-energetic criteria}\label{Se:IntroNonEnergetic}
\paragraph{The  functional and the optimal control problem.}
The third and final problem we consider is the optimisation of a non-energetic criterion. We consider, for any $f\in \mathcal F(V_0)$, the solution $u_f$ of the Dirichlet problem \eqref{Eq:Main} (the same $u_f$ we used in the study of the Dirichlet energy). We fix a non-linearity $j=j(x,u)$ satisfying the following regularity assumptions:
\begin{equation}\label{Hyp:j}\tag{$\bold{H}_j$}
\begin{cases}
j\text{ is $\mathscr C^2$ in $u$, $\mathscr C^1$ in $x$}\,, 
\\  \text{For $k=0,1,2,i=0,1$, for any compact $K\subset\joinrel\subset\R$ there holds }\underset{\overline\O\times K}\sup \Vert \partial_{x^i}^i \partial^k_{u^k}j\Vert_{L^\infty(\O)}<\infty\,, 
\\ \text{$j$ is strictly convex in $u$ and for any $u>0$ $\underset{\overline\O}\inf \partial^2_{u^2} j(x,u)>0$,}
\\ \text{$j$ is increasing in $u$ for any $x\in \overline \O$, for any $u>0$, $\partial_uj(x,u)>0$.}
\end{cases}
\end{equation}
The problem under consideration is 
\begin{equation}\tag{$\bold{P}_j$}\label{Pv:NonEnergetic}
\fbox{$\displaystyle \max_{f\in \mathcal F(V_0)}\left(J(f):=\int_\O j(x,u_f(x))dx\right).
$}
\end{equation}

We start with a standard result.
\begin{lemma}\label{Le:BasicNonEnergetic}
The problem \eqref{Pv:NonEnergetic} has a solution. Furthermore, the map $J$ is strictly convex. In particular any solution $f^*$ of \eqref{Pv:Eigenvalue} is a bang-bang function in the sense of Definition \ref{De:BangBang}: there exists $E^*\subset \O$ such that $f^*=\mathds 1_{E^*}$, with $\mathrm{Vol}(E^*)=V_0\mathrm{Vol}(\O).$ 
\end{lemma}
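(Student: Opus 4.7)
The plan is to prove the three assertions in turn by standard arguments: existence through the direct method in the weak-$*$ topology of $L^\infty(\O)$, strict convexity of $J$ through the linearity of the control-to-state map, and the bang-bang property as an immediate consequence of the fact that a strictly convex function on a convex set attains its maximum only at extreme points.

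For existence, I would first use that $\mathcal F(V_0)$ is bounded, convex and weakly-$*$ closed in $L^\infty(\O)$ (as recalled in Section~\ref{Se:IntroPreliminaire}), hence weakly-$*$ compact by Banach-Alaoglu. It then suffices to show that $J$ is sequentially continuous for this topology. If $f_n\rightharpoonup^* f$ with $f_n,f\in \mathcal F(V_0)$, elliptic regularity yields a uniform $W^{2,p}(\O)$ bound on $\{u_{f_n}\}$ for every finite $p$, so Rellich's compactness gives, along a subsequence, strong $W^{1,2}_0(\O)$ convergence and a.e.\ convergence to $u_f$, together with a uniform $L^\infty$ bound on $\{u_{f_n}\}$. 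The local boundedness condition on $j$ in \eqref{Hyp:j} legitimises dominated convergence, so $J(f_n)\to J(f)$; a standard subsequence argument upgrades this to full-sequence continuity and a maximiser exists.

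The decisive observation for strict convexity of $J$ is that the PDE $-\Delta u_f=f$ is linear in $f$, so the control-to-state map $f\mapsto u_f$ is linear. Given $f_1,f_2\in \mathcal F(V_0)$ with $f_1\neq f_2$ and $t\in (0,1)$, one then has $u_{tf_1+(1-t)f_2}=tu_{f_1}+(1-t)u_{f_2}$, and the injectivity of the Dirichlet Laplacian on $W^{1,2}_0(\O)$ ensures that $\{u_{f_1}\neq u_{f_2}\}$ has positive Lebesgue measure. Combining the pointwise strict convexity of $u\mapsto j(x,u)$ from \eqref{Hyp:j} with integration then yields
\[J(tf_1+(1-t)f_2)<tJ(f_1)+(1-t)J(f_2).\]

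For the bang-bang property, I would argue by contradiction: were some maximiser $f^*$ not an extreme point of $\mathcal F(V_0)$, one could write $f^*=tg_1+(1-t)g_2$ with $g_1\neq g_2$ in $\mathcal F(V_0)$ and $t\in (0,1)$, and the strict convexity just established would give $J(f^*)<\max(J(g_1),J(g_2))$, contradicting optimality. Hence $f^*\in \mathrm{Extr}(\mathcal F(V_0))$, which by the characterisation recalled in Section~\ref{Se:IntroPreliminaire} means $f^*=\mathds 1_{E^*}$ for some measurable $E^*\subset \O$ with $\mathrm{Vol}(E^*)=V_0\mathrm{Vol}(\O)$. No step should present a genuine obstacle; if anything, the only mild care is needed in the dominated convergence of the first step, and it is underwritten by the uniform $L^\infty$ bound on $\{u_{f_n}\}$ together with the local sup bound in \eqref{Hyp:j}.
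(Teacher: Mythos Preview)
Your argument is correct and is precisely the standard route the paper has in mind: the lemma is stated without proof as a ``standard result'', and your direct-method existence, strict convexity via linearity of $f\mapsto u_f$ combined with the strict convexity of $j(x,\cdot)$, and the extreme-point argument for the bang-bang property constitute exactly that standard proof.
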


\paragraph{The switch function and the thresholding algorithm.}
We now describe the switch function in the following standard lemma:
\begin{lemma}\label{Le:SwitchNonEnergetic}
The map $J$ is Fr\'echet-differentiable and, for any $f\in \mathcal F(V_0)$, for any admissible perturbation $h$ at $f$ there holds
\[ \dot J(f)[h]=\int_\O h p_f\]where $p_f$ is the unique solution of \begin{equation}\label{Eq:AdjointNonEnergetic}\begin{cases}-\Delta p_f=\partial_u j(x,u_f)&\text{ in }\O\,, 
\\ p_f\in W^{1,2}_0(\O).
\end{cases}
\end{equation}
\end{lemma}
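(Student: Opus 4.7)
The approach is the classical adjoint-state method for PDE-constrained optimisation, which here reduces to three ingredients: linearity of the state equation \eqref{Eq:Main}, a chain rule applied to the Nemytskii operator associated with $j$, and an integration by parts to transfer the direct sensitivity onto the adjoint variable $p_f$.

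First I would exploit the fact that \eqref{Eq:Main} is linear: the solution map $L^2(\O)\ni f\mapsto u_f\in W^{1,2}_0(\O)$ is itself linear and continuous, so its Fr\'echet derivative at every $f\in \mathcal F(V_0)$ in any admissible direction $h$ is simply $\dot u_f[h]=u_h$, where $-\Delta u_h=h$ in $\O$ with $u_h\in W^{1,2}_0(\O)$. Because $0\leq f\leq 1$ for every $f\in \mathcal F(V_0)$, standard elliptic regularity provides a uniform bound $\Vert u_f\Vert_{L^\infty(\O)}\leq C(\O)$, so that $u_f$ takes values in a fixed compact interval $K\subset \R$; this is crucial in order to invoke the local regularity bounds in \eqref{Hyp:j}.

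Next, I would combine this with a Taylor expansion of $j(x,\cdot)$ around $u_f(x)$. Writing $u_{f+th}-u_f=t\,u_h$ (by linearity), hypothesis \eqref{Hyp:j} gives
\[ j(x,u_{f+th})-j(x,u_f)=t\,\partial_u j(x,u_f)\,u_h(x)+R(x,t),\]
with the pointwise bound $|R(x,t)|\leq \tfrac12 t^2\,\Vert\partial^2_{u^2}j\Vert_{L^\infty(\overline\O\times K)}\,u_h(x)^2$. Integrating, dividing by $t$ and letting $t\to 0$ (using that $u_h$ depends linearly and continuously on $h$ in $L^2(\O)$) yields both the Fr\'echet differentiability of $J$ and the formula
\[ \dot J(f)[h]=\int_\O \partial_u j(x,u_f)\,u_h.\]

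Finally, I would introduce the adjoint state: since $\partial_u j(\cdot,u_f)\in L^\infty(\O)$, Lax--Milgram yields a unique solution $p_f\in W^{1,2}_0(\O)$ to \eqref{Eq:AdjointNonEnergetic}. Testing \eqref{Eq:AdjointNonEnergetic} against $u_h$ and the equation satisfied by $u_h$ against $p_f$ — both integrations by parts being legitimate because the test functions vanish on $\partial\O$ — gives
\[ \int_\O \partial_u j(x,u_f)\,u_h=\int_\O \n p_f\cdot \n u_h=\int_\O p_f\,h,\]
which is the announced identity. The only point requiring some care is the rigorous justification of the chain rule in the Fr\'echet (rather than G\^ateaux) sense; however, the uniform $L^\infty$ bound on $u_f$ confines everything to a compact set where the second derivative of $j$ is bounded, so this does not constitute a genuine obstacle.
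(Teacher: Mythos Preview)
Your argument is correct and is precisely the standard adjoint-state computation the paper has in mind; in fact the paper does not give a proof of this lemma at all, simply labelling it a ``standard lemma''. Your three steps (linearity of the solution map, chain rule for the Nemytskii operator using the $\mathscr C^2$ bounds from \eqref{Hyp:j} on the compact range of $u_f$, and the symmetric integration by parts between $u_h$ and $p_f$) constitute exactly the expected justification.
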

Lemma \ref{Le:SwitchNonEnergetic} states that  the function $p_f$ defined in \eqref{Eq:AdjointNonEnergetic} is the switch function of the optimal control problem \eqref{Pv:NonEnergetic}. 
This allows to describe the thresholding algorithm for \eqref{Pv:NonEnergetic}.

\begin{algorithm}[H]
\caption{Thresholding algorithm for non-energetic problems eigenvalue}\label{Algo:NonEnergetic}
\begin{algorithmic}[1]
\State Initialisation at $f_0\in \mathcal F$
\State $k\gets 0$
\State Compute $p_{f_k}$
\State Compute $c_k$ such that $\mathrm{Vol}(\{p_{f_k}>c_k\})=V_0\mathrm{Vol}(\O)$.
\State $f_k\gets \mathds 1_{\{p_{f_k}>c_k\}}$
\State $k\gets k+1$.
\end{algorithmic}
\end{algorithm}
Remark \ref{Re:DirichletWell?} applies here as well.
The notion of ``critical point" and of ``local minimisers", which were defined for the Dirichlet energy in Definitions \ref{De:CriticalPoint}-\ref{De:LocalMinimiserDirichlet}, are identical for non-energetic criteria: it suffices to replace  $\mathscr E$ with $J$.

\paragraph{Main result.} The main result regarding non-energetic problems is the following convergence theorem:
\begin{theorem}\label{Th:NonEnergetic}
Assume $j$ satisfies \eqref{Hyp:j}. There exists $\e>0$ such that, if $1-\e\leq V_0<1$, for any initialisation $f_0\in \mathcal F(V_0)$, the sequence $\{f_k\}_{k\in \N}$ generated by Algorithm \ref{Algo:NonEnergetic} converges strongly in $L^1(\O)$ to a local minimiser of $J$ in $\mathcal F(V_0)$.
\end{theorem}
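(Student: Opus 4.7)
The plan is to follow the scheme of proof of Theorem~\ref{Th:Dirichlet} essentially verbatim, with the Dirichlet energy $\mathscr E$ replaced by the functional $J$ and the Dirichlet energy switch function $-u_f$ replaced by the adjoint state $p_f$ defined in~\eqref{Eq:AdjointNonEnergetic}. The three pillars of the argument remain the same: (i) a local quantitative stability inequality for $J$ around stable bang-bang critical points, obtained from the new diagonalisation of the shape Hessian in the regime $V_0\to 1^-$; (ii) a monotonicity bound along the iterates of Algorithm~\ref{Algo:NonEnergetic} providing a telescoping control of $\sum_k \Vert f_{k+1}-f_k\Vert_{L^1(\O)}^2$; and (iii) a compactness argument combining the two to identify the $L^1$-limit as the local minimiser of $J$ announced by the statement.

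For the stability step, I would start from the second-order expansion of $J$. A direct differentiation of the state equation, together with the observation that $f\mapsto u_f$ is affine and hence $\ddot u_f[h,h]=0$, gives
\[
\ddot J(f)[h,h] = \int_\O \partial^2_{uu} j(x,u_f)\,u_h^2,
\]
where $u_h\in W^{1,2}_0(\O)$ solves $-\Delta u_h = h$ in $\O$. Assumption~\eqref{Hyp:j} provides a pointwise lower bound $\partial^2_{uu} j(x,u_f)\geq c_0>0$ on the support of $u_f$. The key technical step is to feed this Hessian into the new diagonalisation procedure developed for~\eqref{Pv:Dirichlet}: as $V_0\to 1^-$, any admissible perturbation at a bang-bang critical point $f^*=\mathds 1_{E^*}$ is forced by the constraints $0\leq f\leq 1$ and $\fint_\O f=V_0$ to concentrate in a thin layer neighbouring $\partial E^*\cap\O$, and in that regime the diagonalisation produces a quadratic form that is coercive with respect to $\Vert h\Vert_{L^1(\O)}^2$ with constants uniform in $V_0$. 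This yields a quantitative inequality of the type required by Definition~\ref{De:LocalMinimiserDirichlet} adapted to $J$, around each stable bang-bang critical point.

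For the monotonicity and compactness step, I would rely on the bathtub principle: since $f_{k+1}$ is the characteristic function of an upper level set of $p_{f_k}$ of prescribed volume $V_0|\O|$, it realises $\max_{h\in\mathcal F(V_0)}\int_\O h p_{f_k}$, so $\int_\O(f_{k+1}-f_k)p_{f_k}\geq 0$. Combined with strict convexity of $J$ (Lemma~\ref{Le:BasicNonEnergetic}) and a uniform non-degeneracy of $p_{f_k}$ near its level set at height $c_k$ -- which follows from elliptic $\mathscr C^{1,\alpha}$ regularity applied to~\eqref{Eq:AdjointNonEnergetic} and a Hopf-type argument near $\partial\O$ in the regime $V_0\to 1^-$, exactly as in the Dirichlet case -- one obtains a telescoping control of the form $|J(f_{k+1})-J(f_k)|\geq c\Vert f_{k+1}-f_k\Vert_{L^1(\O)}^2$. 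Summing gives $\Vert f_{k+1}-f_k\Vert_{L^1}\to 0$ and, by weak-$\star$ compactness of $\mathcal F(V_0)$, any accumulation point $f^*$ is a critical point in the sense of Definition~\ref{De:CriticalPoint}; the stability estimate of step~(i) then upgrades this to strong $L^1$-convergence and identifies $f^*$ as the announced local minimiser of $J$.

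The main obstacle, compared with the Dirichlet energy proof, is that $p_f$ is no longer a state variable but an adjoint coupled to $u_f$ through the nonlinearity $\partial_u j(x,u_f)$. Propagating the uniform $\mathscr C^{1,\alpha}$ bounds, the Hopf lemma near $\partial\O$, and the non-degeneracy of level sets from $u_f$ to $p_f$ in the regime $V_0\to 1^-$ is the technical heart of the argument; it goes through thanks to the regularity assumptions built into~\eqref{Hyp:j}, but it is precisely at this step that the smallness of $1-V_0$ is required, because the diagonalisation of the shape Hessian only produces coercivity of $\ddot J(f^*)[h,h]$ on the tangent cone once the admissible perturbations are geometrically trapped near $\partial E^*$.
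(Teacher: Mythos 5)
The general architecture you propose -- quantitative bathtub plus convexity for summability, compactness and a stability estimate around critical points to pin down the unique closure point -- is indeed the scheme the paper uses for all three problems, and your treatment of the summability step (monotonicity of $J$ along the iterates, propagation of $\mathscr C^{1,\alpha}$ bounds and Hopf non-degeneracy from $u_f$ to $p_f$) is correct and matches the paper.

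The stability step, however, has a genuine gap. You diagonalise the \emph{control} Hessian $\ddot J(f)[h,h]=\int_\O\partial^2_{uu}j(x,u_f)\,u_h^2$, but this is not the object the paper's ``new diagonalisation procedure'' acts on, and it cannot do the job. First, the sign is against you: $J$ is convex and you are maximising, so $\ddot J\geq 0$ \emph{degrades} the first-order bathtub gain $\dot J(f^*)[h]\leq -c_1\Vert h\Vert_{L^1}^2$ rather than reinforcing it; for $d\geq 3$ the estimate $\Vert u_h\Vert_{L^2}^2\lesssim \Vert h\Vert_{L^1}^{1+2/d}$ for $|h|\leq 1$ shows $\ddot J(f^*)[h,h]$ is of \emph{lower} order than $\Vert h\Vert_{L^1}^2$, so a naive two-term Taylor expansion does not close. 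Second, the paper's diagonalisation is a Steklov-type spectral problem (\eqref{Eq:EllDirichletSteklov}, \eqref{Eq:eigen2}) for the \emph{shape} Hessian $\mathscr L_E''(E)[\Phi,\Phi]$, which for \eqref{Pv:NonEnergetic} reads schematically $-\int_{\partial E}p_\Phi'\langle\Phi,\nu\rangle-\int_{\partial E}\partial_\nu p_{\mathds 1_E}\langle\Phi,\nu\rangle^2$; here $p_\Phi'$ is the shape derivative of the adjoint, solving $-\Delta p_\Phi'=\partial^2_{uu}j(x,u_{\mathds 1_E})\,u_\Phi'$ with $u_\Phi'$ itself defined by the transmission problem \eqref{Eq:FSDu}. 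The two shape derivatives are coupled, the weight $\rho_E$ must be built from $\partial_\nu p_{\mathds 1_E}$ rather than $\partial_\nu u_{\mathds 1_E}$, and one then shows, via the analogue of Lemma~\ref{Le:EigenvalueBlowup}, that the lowest eigenvalue of the associated Steklov problem blows up as $V_0\to1$. Your proposal does not set up this coupled transmission/Steklov system and instead attempts to read coercivity in $\Vert h\Vert_{L^1}^2$ directly off the scalar quantity $\int\partial^2_{uu}j\,u_h^2$, which is not what the diagonalisation delivers (it delivers $L^2(\partial E)$-coercivity of the shape Hessian, and the passage to $L^1$ control of the control variable is the separate argument from \cite{MRB2020}). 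You also skip the dichotomy argument (Lemma~\ref{Le:MultipleClosurePoints}) and the isolation of critical points (Lemma~\ref{Le:CriticalIsolated}), which are what actually force uniqueness of the closure point; the stability estimate alone does not.
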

As it is very similar to the proof of Theorem \ref{Th:Dirichlet}, the proof of Theorem \ref{Th:NonEnergetic} is given in \cite[Section B]{CMFPSP}.

%
%
%
%
\subsection{The plan for the proof of convergence}\label{Se:IntroPlan}
As the schemes of proof for Theorems \ref{Th:Dirichlet}, \ref{Th:Eigenvalue} and \ref{Th:NonEnergetic} are identical and as they are each quite long we now present the general plan. This allows use to single out the relevant elements for each step of the proof. 

\paragraph{Consequence of the convexity of the functional.}
The first part of the proof of convergence is to use the convexity or concavity properties of the functional (depending on whether we are maximising or minimising). To be more precise, consider a generic functional $\mathcal G:\mathcal F(V_0)\to \R$ that we seek to minimise in $\mathcal F(V_0)$. Assume $\mathcal G$ is Fr\'echet-differentiable and denote by $-q_f$ its switch function at a given $f\in \mathcal F(V_0)$ so that, for any $f$, for any admissible perturbation $h$ at $f$, there holds 
\[ \dot{\mathcal G}(f)[h]=-\int_\O q_f h.\]  Starting from $f_k$, we choose $f_{k+1}:=\mathds1_{\{q_{f_k}>c_k\}}$ where $\mathrm{Vol}\left(\{q_{f_k}>c_k\}\right)=V_0\mathrm{Vol}(\O)$. If the functional $\mathcal G$ is concave, then we have the estimate
\begin{equation}\label{Eq:ConsequenceConvexity} \mathcal G(f_{k+1})-\mathcal G(f_k)\leq \dot{\mathcal G}(f_k)(f_{k+1}-f_k)=-\int_\O q_{f_k}\left(f_{k+1}-f_k\right).\end{equation} Thus we can be satisfied with gaining enough control of the first order derivative, which is a linear (in the perturbation $h$) functional.

\paragraph{Consequence of large volume constraints I: quantitative bathtub principle and well-posedness of thresholding algorithms.}
This is where the large volume constraint plays a first role. Keeping the same notations as in the previous paragraph, observe that from Lemmata \ref{Le:SwitchDirichlet}-\ref{Le:SwitchEigenvalue}-\ref{Le:SwitchNonEnergetic} the switch function $q_f$ satisfies a PDE with Dirichlet boundary conditions; as the PDEs involved enjoy a maximum principle, it is expected that, if $V_0$ is close enough to 1, then the set $\{q_f>c\}$ such that $\mathrm{Vol}(\{q_f>c\})=V_0$ is close to the boundary, and even $\mathscr C^1$ close to $\partial \O$. Moreover, it is expected that by elliptic regularity $q_f$ is $\mathscr C^{1,\alpha}$ close to $q_{\overline f\equiv 1}$, the function associated to $f=1$ (observe that when $V_0=1$ the set of admissible controls is reduced to a single point: $\mathcal F(V_0)=\{\overline f\equiv 1\}$).  Applying the Hopf Lemma to $q_{\overline f}$ we may get that, for $V_0$ close to 1, the gradient of $q_f$ is always non-zero in a neighbourhood of $\partial \O$. With these two informations combined, we deduce that the level set $\{q_f=c_f\}$ has zero measure and, hence,  that the thresholding algorithm is indeed well defined.

But there is another crucial information we can deduce, and it is that $|\n q_f|\neq 0$ on $\{q_f=c_f\}$. Combined with the aforementioned regularity of the boundary we are then in a position to apply a \emph{quantitative bathtub principle} \cite[Proposition 26]{MRB2020} (see also the related \cite{Cianchi2008}). One information we have not yet given is that the choice of the next iterate $f_{k+1}$ in the thresholding algorithm is that $f_{k+1}$ is actually the unique\footnote{using the information about the measure of level sets that were already derived.} maximiser of the functional 
\[T:\mathcal F(V_0)\ni f\mapsto \mapsto \int_\O q_{f_k}f.\] With this notation \eqref{Eq:ConsequenceConvexity} rewrites $\mathcal G(f_{k+1})-\mathcal G(f_k)\leq T(f_k)-T(f_{k+1})$. The quantitative bathtub principle writes: under certain regularity assumptions, for a certain constant $C>0$,
\[\forall f\in \mathcal F(V_0)\,, T(f)-T(f_{k+1})\leq -C\Vert f_{k+1}-f\Vert_{L^1(\O)}^2.
\]Assuming that the constant $C$ is uniform in $k$ this would yield 
\[ G(f_{k+1})-G(f_k)\leq -C\Vert f_{k+1}-f_k\Vert_{L^1(\O)}^2\] and, in turn, prove that the sequence $\{f_k\}_{k\in \N}$ satisfies $\sum_{k\in \N}\Vert f_{k+1}-f_k\Vert_{L^1(\O)}^2<\infty$. Finally, we can prove that this implies that, either the sequence $\{f_k\}_{k\in \N}$ converges, or it has an infinite number of closure points. To rule the latter possibility out, we use a shape derivation argument.

\paragraph{Consequence of large volume constraints II: shape derivative formalism and coercivity of shape hessians.}
We use the shape derivative formalism of Hadamard; see \cite[Chapter 5]{HenrotPierre} for a full introduction. We can prove that any closure point of the sequence $\{f_k\}_{k\in \N}$ is a bang-bang function $f^*=\mathds 1_E$ that is also a critical point (in the sense that $\{q_{f^*}>c_{f^*}\}=E^*$). With an abuse of notation, for a bang-bang function $f=\mathds 1_E$, we may define the shape functional 
\[\mathcal G(E):=\mathcal G(\mathds 1_E).\]
To show that possible closure points are isolated, which would allow to conclude, we prove that these closure points are stable in the sense of shapes \cite{DambrineLamboley}. Namely, considering a critical shape $E$ and smooth enough vector fields $\Phi$, we define shape Lagrangians (with a Lagrange multiplier $\mu=\mu(E^*)$ independent of $\Phi$ and encoded by first order optimality conditions)
\[ g_\Phi(t):=\mathcal G\left((\mathrm{Id}+t\Phi)E\right)+\mu \mathrm{Vol}\left((\mathrm{Id}+t\Phi)E\right)\] so that $g'(0)=0$. We then show a coercivity of second order shape derivatives or, in other words, that there exists a constant $C>0$ such that for any $\Phi$
\[ g_\Phi''(0)\geq C \Vert \langle \Phi,\nu\rangle\Vert_{X(\partial E)}^2\] for some norm $X$. In this case we say the second-order derivative is $X$-coercive. Our main contribution in this paper is to find a procedure to diagonalise these second order shape derivatives by using a new interior Steklov system of eigenvalues and eigenfunctions and to prove that, under large volume constraints, these second-order shape derivatives are indeed coercive with respect to the $L^2$-norm.  We believe this is of independent interest. Once this is done we can apply the general procedure of \cite{DambrineLamboley}, adapted in \cite{MazariQuantitative,MazariNLA,MRB2020} to the setting of optimal control problems, to prove that critical points are isolated. 

\section{Proof of Theorem \ref{Th:Dirichlet}}
We begin with the study of \eqref{Pv:Dirichlet}.
\subsection{First consequences of a large volume constraint}
We gather here several crucial properties of the problem under large enough volume constraints. First, we show that when $V_0$ is close enough to 1 the level-sets generated by the thresholding algorithm enjoy strong regularity properties.

To state our results in a synthetic way we let, for any $f\in \mathcal F(V_0)$, $\mu_{f,V_0}$ be the unique real number such that 
\[ \frac{\mathrm{Vol}(\{u_f>\mu_{f,V_0}\})}{\mathrm{Vol}(\O)}\leq V_0\leq  \frac{\mathrm{Vol}(\{u_f\geq \mu_{f,V_0}\})}{\mathrm{Vol}(\O)}.\]
We also define 
\[ \omega_{f,V_0}:=\{ u_f>\mu_{f,V_0}\}.\]
\begin{lemma}\label{Le:V0Lagrange}
There exists $\e_0>0$ and $\delta_0>0$ such that, for any $V_0\in (1-\e_0;1)$, for any $f\in \mathcal F(V_0)$, we have on the one hand
\[ \mathrm{Vol}(\{ u_f=\mu_{f,V_0}\})=0\] and, on the other hand, the following regularity properties:
\begin{enumerate}
\item $\omega_{f,V_0}$ has a $\mathscr C^{1,\alpha}$ boundary (for any $\alpha \in (0;1)$),
\item
\[ \min_{\partial \omega_{f,V_0}} \left| \frac{\partial u_f}{\partial \nu} \right|\geq \delta_0,\]
\item $\partial \omega_{f,V_0}$ is locally a graph over $\partial \O$.
\end{enumerate}
Finally, for any $\alpha \in (0;1)$, $\partial \omega_{f,V_0}$ converges to $\partial \O$ in $\mathscr C^{1,\alpha}$ as $V_0\to 1$, uniformly in $f\in \mathcal F(V_0)$.
\end{lemma}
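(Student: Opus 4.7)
\textbf{Proof plan for Lemma \ref{Le:V0Lagrange}.}

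The guiding intuition is that as $V_0 \to 1$ the admissible set $\mathcal F(V_0)$ collapses to the single function $\overline f\equiv 1$, since for any $f\in \mathcal F(V_0)$ one has $\|f-1\|_{L^1(\O)}\le (1-V_0)\Vol(\O)$. All the claimed properties should then follow by perturbation from the reference solution $u_{\overline f}$, which is smooth, strictly positive in $\O$, and has a non-degenerate normal derivative on $\partial\O$ by Hopf's lemma.

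\textbf{Step 1: uniform $\mathscr C^{1,\alpha}$ convergence $u_f\to u_{\overline f}$.} Since $\|f\|_{L^\infty}\le 1$ uniformly on $\mathcal F(V_0)$ and $\partial\O$ is $\mathscr C^2$, Calder\'on--Zygmund estimates give $\|u_f\|_{W^{2,p}(\O)}\le C_p$ for every $p<\infty$, hence $\|u_f\|_{\mathscr C^{1,\alpha}(\overline\O)}\le C_\alpha$ for every $\alpha\in(0;1)$. If the claimed uniform convergence failed, there would exist $V_k\to 1$ and $f_k\in \mathcal F(V_k)$ with $u_{f_k}$ staying bounded away from $u_{\overline f}$ in $\mathscr C^{1,\alpha}$. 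But $\|f_k-1\|_{L^1}\to 0$, so $f_k\to 1$ in $L^1$, and by compactness one can extract a $\mathscr C^{1,\alpha'}$-convergent subsequence whose limit must solve $-\Delta u=1$ with zero Dirichlet data, that is $u_{\overline f}$, a contradiction.

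\textbf{Step 2: non-degenerate gradient near $\partial\O$.} Maximum principle and Hopf's lemma applied to $u_{\overline f}$ yield $2\delta_0:=\min_{\partial\O}|\nabla u_{\overline f}|>0$. By continuity of $\nabla u_{\overline f}$ there is a tubular neighborhood $U_\eta:=\{x\in\O:\,d(x,\partial\O)<\eta\}$ on which $|\nabla u_{\overline f}|\ge \frac{3}{2}\delta_0$. By Step 1, for $V_0$ close enough to $1$ one has $|\nabla u_f|\ge \delta_0$ on $U_\eta$ uniformly in $f\in \mathcal F(V_0)$, and moreover $\nabla u_f$ is nearly antiparallel to the outward unit normal to $\partial\O$ throughout $U_\eta$.

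\textbf{Step 3: localisation of the level set.} On the complement $\O\setminus U_{\eta/2}$, $u_{\overline f}$ is bounded below by some $c_\eta>0$, so for $V_0$ close to $1$ the uniform convergence of Step 1 gives $u_f\ge c_\eta/2$ there. On the other hand, the volume constraint forces $\Vol(\O\setminus\omega_{f,V_0})=(1-V_0)\Vol(\O)\to 0$; combining this with the uniform lower bound just obtained shows $\mu_{f,V_0}<c_\eta/2$ for $V_0$ large enough (uniformly in $f$), and hence the level set $\{u_f=\mu_{f,V_0}\}$ is contained in the good neighborhood $U_{\eta/2}$ where Step 2 applies.

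\textbf{Step 4: regularity and graphicality.} Steps 2--3 together imply $|\nabla u_f|\ge\delta_0$ on $\{u_f=\mu_{f,V_0}\}$, so the implicit function theorem yields items (1), (2), and the zero-measure assertion $\Vol(\{u_f=\mu_{f,V_0}\})=0$. To obtain (3) and the final $\mathscr C^{1,\alpha}$ convergence of $\partial\omega_{f,V_0}$ to $\partial\O$, I would use the normal-coordinate parametrization $\Psi(x,s):=x+s\nu(x)$, $x\in\partial\O$, which is a $\mathscr C^{1,\alpha}$-diffeomorphism from $\partial\O\times(-\eta_0;\eta_0)$ onto its image for $\eta_0$ small (since $\partial\O$ is $\mathscr C^2$). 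By Step 2 the function $s\mapsto u_f(\Psi(x,s))$ has derivative bounded below by $\delta_0/2$ in absolute value, so by a quantitative implicit function theorem there exists a unique $s_f(x)$ with $u_f(\Psi(x,s_f(x)))=\mu_{f,V_0}$, giving the graph representation. The $\mathscr C^{1,\alpha}$ norm of $s_f$ is controlled by $\|u_f-u_{\overline f}\|_{\mathscr C^{1,\alpha}}+|\mu_{f,V_0}|$, both of which tend to $0$ uniformly in $f$ as $V_0\to 1$.

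\textbf{Expected main obstacle.} The analytical content of Steps 1--3 is standard; the delicate point is the quantitative, uniform-in-$f$ graph representation in Step 4. What makes it tractable is that the non-degeneracy $|\nabla u_f|\ge \delta_0$ holds on the whole strip $U_\eta$ (not just on the level set), so the implicit function theorem can be applied with constants depending only on $\delta_0$, on the $\mathscr C^{1,\alpha}$ bound of $u_f$, and on the $\mathscr C^2$-geometry of $\partial\O$, all uniform in $f\in \mathcal F(V_0)$.
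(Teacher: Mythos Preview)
Your plan follows the same strategy as the paper: uniform $\mathscr C^{1,\alpha}$ convergence of $u_f$ to the torsion function $w_\O=u_{\overline f}$, Hopf's lemma for a gradient lower bound in a strip near $\partial\O$, localisation of the level set to that strip, and the implicit function theorem to conclude. The paper obtains $\mu_{f,V_0}\to 0$ through the sandwich $\beta_0 w_\O\le u_f\le w_\O$ and proves the graph property by a contradiction argument based on the intermediate value theorem; your more direct route via the volume constraint and normal coordinates is equally valid in spirit.

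There is however a slip in Step~3: the combination of ``$\Vol(\O\setminus\omega_{f,V_0})$ small'' and ``$u_f\ge c_\eta/2$ on $\O\setminus U_{\eta/2}$'' does not force $\mu_{f,V_0}<c_\eta/2$. A lower bound on $u_f$ in the interior only tells you $\{u_f<c_\eta/2\}\subset U_{\eta/2}$, which places no upper bound on $\mu_{f,V_0}$. The argument you actually need goes the other way: since $u_f=0$ on $\partial\O$ and $u_f$ is uniformly Lipschitz (or simply $u_f\le w_\O$ by the maximum principle, as the paper uses), the set $\{u_f<\mu\}$ contains a strip around $\partial\O$ of width $\gtrsim\mu$, hence $\Vol(\{u_f<\mu\})\ge c\,\mu$; combined with $\Vol(\{u_f<\mu_{f,V_0}\})\le(1-V_0)\Vol(\O)$ this gives $\mu_{f,V_0}\to 0$. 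Note also that the equality $\Vol(\O\setminus\omega_{f,V_0})=(1-V_0)\Vol(\O)$ is only available \emph{after} the level set is shown to have measure zero; at this stage you should work with the strict sublevel set. Finally, since $\partial\O$ is only $\mathscr C^2$, the normal-coordinate map $\Psi$ is merely $\mathscr C^1$, so in Step~4 you should work in local graph charts for $\partial\O$ (rather than normal coordinates) to recover the full $\mathscr C^{1,\alpha}$ regularity and convergence of $s_f$.
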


\begin{proof}[Proof of Lemma \ref{Le:V0Lagrange}]
We will proceed in several steps.

\medskip

\noindent \textbf{Convergence to the torsion function.}
This lemma rests upon the study of the torsion function $w_\O$ of $\O$, defined as the solution of
\begin{equation}\label{Eq:Torsion}
\begin{cases}-\Delta w_\O=1&\text{ in }\O\,, 
\\ w_\O=0&\text{ on }\partial \O.\end{cases}\end{equation} By the maximum principle of Hopf there exists $\underline \delta>0$ such that 
\begin{equation}\label{Eq:EstTorsion} \min_{\partial \O}\left|\frac{\partial w_\O}{\partial \nu}\right|\geq \underline \delta.\end{equation}
Furthermore, observe that standard $L^p$-regularity estimates (see for instance \cite[Corollary 9.10]{GilbargTrudinger}) entail that, for any $p\in [1;+\infty)$, there exists a constant $C_p>0$ such that for any $V_0\in (0;1)$ and any $f\in \mathcal F(V_0)$ 
\[ \Vert u_f\Vert_{W^{2,p}(\O)}\leq C_p.\] From Sobolev embeddings, we deduce that for any $\alpha\in (0;1)$ there exists $H_\alpha$ such that for any $V_0\in (0;1)$ and any $f\in \mathcal F(V_0)$ 
\begin{equation}\label{Eq:TorsionHolder} \Vert u_f\Vert_{\mathscr C^{1,\alpha}( \O)}\leq H_\alpha.\end{equation} From these regularity estimates, we obtain at once the uniform convergence result
\begin{equation}\label{Eq:CvTorsion}
\forall \alpha \in (0;1)\,, \lim_{V_0\to 1}\sup_{f\in \mathcal F(V_0)}\Vert u_f-w_\O\Vert_{\mathscr C^{1,\alpha}(\O)}=0.
\end{equation}
%
\textbf{Behaviour of $\mu_{f,V_0}$.}
Let us now prove that 
\begin{equation}\label{Eq:ConvMultiplicateur}\lim_{V_0\to 1}\sup_{f\in \mathcal F(V_0)}\vert \mu_{f,V_0}\vert=0.\end{equation}
Argue by contradiction and assume \eqref{Eq:ConvMultiplicateur} does not hold. 
Then there exist $\gamma>0$, a sequence $\{V_k\}_{k\in \N}$  that satisfies $V_k\underset{k\to \infty}\rightarrow 1$ and, for any $k\in \N$, a function $f_k\in \mathcal F(V_k)$ such that 
\begin{equation}\label{Eq:ConvMultiplicateurContrad}
\forall k\in \N\,, \vert \mu_{f_k,V_0}\vert \geq \gamma>0.
\end{equation}
To obtain a contradiction, we first prove that there exists $\beta_0\,, \e_0'\in (0;1)$ such that
\[ \forall V_0\in (1-\e_0'; 1)\,, \forall f \in \mathcal F(V_0)\,,  \beta_0 w_\Omega \leq u_f\leq w_\O.
\]That $u_f\leq w_\O$ simply follows from the Hopf maximum principle as, for any $V_0\in (0;1)$ and any $f\in \mathcal F(V_0)$ there holds $-\Delta u_f=f\leq1= -\Delta w_\O$. Now fix $\beta_0\in (0;1)$. From \eqref{Eq:CvTorsion} and \eqref{Eq:EstTorsion} there exists $\e_0'>0$ such that for any $V_0\in (1-\e_0';1)$ and any $f\in \mathcal F(V_0)$ we have 
\begin{equation}\label{Eq:OrderNormalDerivative}\beta_0 \max_{\partial \O}\left|\frac{\partial w_\O}{\partial \nu}\right|<\min_{\partial \O}\left|\frac{\partial u_f}{\partial \nu}\right|.
\end{equation} As for any $\alpha \in (0;1)$ we have 
\[ w_\O \in \mathscr C^{1,\alpha}(\O)\,, \sup_{V_0\in (0;1)\,, {f \in \mathcal V_0} }\Vert u_f\Vert_{\mathscr C^{1,\alpha}(\O)}<\infty\] we deduce that there exists $r_0'>0$ small enough such that if we define 
\begin{equation}\label{Eq:Or0} \Omega(r_0'):=\left\{ x \in \O\,, \mathrm{dist}(x,\partial \O)\leq r_0'\right\}\end{equation} then \eqref{Eq:OrderNormalDerivative} along with the Dirichlet boundary conditions implies 
\[\beta_0 w_\O\leq u_f \text{ in }\O(r_0').
\]
From the maximum principle and \eqref{Eq:CvTorsion} we also conclude that if $\e_0'>0$ is small enough  ($r_0'>0$ being fixed) we have 
\[ \beta_0 w_\O<u_f \text{ in }\O\backslash \Omega(r_0').\] Thus we do obtain, if $\e_0'>0$ is small enough that, for any $V_0\in (1-\e_0';1)$, for any $f\in \mathcal F(V_0)$ we have 
\begin{equation}\label{Eq:OrderTorsion}\beta_0 w_\Omega \leq u_f.
\end{equation}
Let us now prove that \eqref{Eq:ConvMultiplicateurContrad} can not hold. From \eqref{Eq:OrderTorsion} we deduce that for any $k\in \N$ we have 
\[ \left\{u_f\geq \mu_{f_k,V_k}\right\}\subset \left\{ w_\O\geq \frac{\mu_{f_k,V_k}}{\beta_0}\right\}\subset \left\{ w_\O\geq \frac{\gamma}{\beta_0}\right\}.\]
Consequently we deduce that 
\[ \forall k\in \N\,, \mathrm{Vol}\left(\left\{u_f\geq \mu_{f_k,V_k}\right\}\right)\leq  \mathrm{Vol}\left(\left\{ w_\O\geq \frac{\gamma}{\beta_0}\right\}\right)\leq 1-s_0
\] for a fixed $s_0\in (0;1)$, since $w_\O>0$ in $\O$ and $\gamma>0$. This is a contradiction with the definition of $\mu_{f_k,V_k}$, whence the conclusion.

\medskip

\noindent \textbf{Study of $\omega_{f,V_0}$.}
From \eqref{Eq:EstTorsion} and the fact that $w_\O \in \mathscr C^{1,\alpha}$ there exists $r_0>0$ such that 
\[ \left|\n w_\O \right|\geq \frac{\underline \delta}2\text{ in }\O(r_0')\] where $\O(r_0)$ is defined in \eqref{Eq:Or0}. Fix such an $r_0>0$.  Since $w_\O>0$ in $\O$, \eqref{Eq:CvTorsion}-\eqref{Eq:ConvMultiplicateur} imply that there exists $\e_0''>0$ such that if $V_0\in (1-\e_0'';1)$ then for any $f\in \mathcal F(V_0)$
\[\{u_f=\mu_{f,V_0}\}\subset \O(r_0).\]
From \eqref{Eq:CvTorsion} we also have, up to reducing $\e_0''>0$,
\[ \inf_{V_0\in (1-\e_0'';1)\,, f \in \mathcal F(V_0)}\left(\inf_{\O(r_0)} \left|\n u_f\right|\right)\geq \frac{\underline\delta}4.\] From the implicit function theorem we deduce first that $\mathrm{Vol}\left(\{u_f=\mu_{f,V_0}\}\right)=0$, second that for any $\alpha \in (0;1)$, $\omega_{f,V_0}$ has a $\mathscr C^{1,\alpha}$ boundary and, finally, that there exists $\delta_0>0$ such that 
\[ \min_{\partial \omega_{f,V_0}} \left| \frac{\partial u_f}{\partial \nu} \right|\geq \delta_0.\]

\noindent \textbf{Convergence of $\partial \omega_{f,V_0}$.}
We first prove that  $\partial \omega_{f,V_0}$ is (locally) a graph over $\partial \O$. Argue by contradiction and assume that there exists a sequence $\{V_k\}_{k\in \N}$ that converges to 1 and, for any $k\in \N$, a function  $f_k\in \mathcal F(V_k)$, a sequence $\{x_k\}_{k\in \N}\in \partial \O^\N$ and two sequences $\{y_{k,0}\}_{k\in \N}$, $\{ y_{k,1}\}_{k\in \N}$ of points of $\partial \omega_{f_k,V_k}$ such that there exist $(t_{k,i})_{k\in \N\,, i=0,1}\in (\R_+\times \{0,1\})^\N$ with 
\[ y_{k,i}=x_k-t_{k,i}\nu(x_k)\,, t_{k,i}\underset{k\to \infty}\rightarrow 0\,, t_{k,0}< t_{k,1}.\]
We consider a closure point $x_\infty$ of $\{x_k\}_{k\in \N}$. By the intermediate value theorem, for any $k\in \N$ there exists $s_k\in (t_{k,0};t_{k,1})$ such that, setting $z_k:=x_k-s_k\nu(x_k)$ we have 
\[ \langle \n u_f(z_k),\nu(x_k)\rangle=0.\] Clearly we have $z_k\underset{k\to \infty}\rightarrow x_\infty$ and $\nu(x_k)\underset{k\to \infty}\rightarrow \nu(x_\infty)$. We obtain from \eqref{Eq:CvTorsion}
\[ \langle \n w_\O(x_\infty)\,, \nu(x_\infty)\rangle=0,\] in contradiction with \eqref{Eq:EstTorsion}. Consequently, as $V_0\to 1$, for any $f\in \mathcal F(V_0)$, $\partial \omega_{f,V_0}$ is a local graph over $\partial \O$; the fact that it converges to $\partial \O$ uniformly in $f\in \mathcal F(V_0)$ is a simple consequence of \eqref{Eq:CvTorsion}.

\end{proof}

We note the following consequence of Lemma \ref{Le:V0Lagrange}: if we denote by $\mathrm{Lip}(\Sigma)$ the Lipschitz-constant of a hypersurface $\Sigma$ then there exists $\e_0>0$ and a constant $M$ such that 
\begin{equation}\label{Eq:EstPerim} \forall V_0\in (1-\e_0;1)\,, \forall f \in \mathcal F(V_0)\,, \mathrm{Lip}(\partial \omega_{f,V_0})+\mathrm{Per}(\omega_{f,V_0})\leq M.\end{equation}
We will use below  \cite[Lemma 3]{Chanillo_2000} to prove that $\omega_{f,V_0}$ actually has an analytic boundary, see Lemma \ref{Le:CriticalPointRegular}; this is not necessary for the time being as we do not need analytic regularity.

We conclude with the following information about $\omega_{f,V_0}$.
\begin{lemma}\label{Le:OmegaConnectedDirichlet}
There exists $\e_0'>0$ such that, for any $V_0\in (1-\e_0';1)$, for any $f\in \mathcal F(V_0)$ the boundary $\partial \omega_{f,V_0}$ is connected.
\end{lemma}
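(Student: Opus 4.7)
The plan is to upgrade the ``locally a graph'' statement from Lemma \ref{Le:V0Lagrange} into a \emph{global} graph representation of $\partial \omega_{f,V_0}$ over $\partial \Omega$; the connectedness of $\partial \omega_{f,V_0}$ will then follow immediately from the connectedness of $\partial \Omega$, which is part of the standing assumptions on $\Omega$.

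First I would fix $r_0 > 0$ small enough that the normal-coordinate map $(x,t) \in \partial \Omega \times [0,r_0] \mapsto x - t\nu(x)$ is a diffeomorphism onto a tubular neighbourhood $\Omega(r_0)$ of $\partial \Omega$ (with $\nu$ the outward unit normal to $\partial \Omega$). By Lemma \ref{Le:V0Lagrange} together with \eqref{Eq:ConvMultiplicateur}, for $V_0$ close enough to $1$ and uniformly in $f \in \mathcal F(V_0)$ one has $\partial \omega_{f,V_0} \subset \Omega(r_0)$.

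The crux of the argument is then to show that for every $x \in \partial \Omega$ the inward normal segment $t \mapsto x - t\nu(x)$, $t \in [0,r_0]$, meets $\partial \omega_{f,V_0}$ in exactly one point. For \emph{uniqueness}, combine \eqref{Eq:EstTorsion} with \eqref{Eq:CvTorsion}: on $\partial \Omega$ one has $\langle \nabla w_\Omega, \nu \rangle = -|\partial_\nu w_\Omega| \leq -\underline\delta$ (since $w_\Omega = 0$ on $\partial \Omega$ and $w_\Omega > 0$ inside), and by $\mathscr C^{1,\alpha}$-closeness in the position variable and, through \eqref{Eq:CvTorsion}, in $f$, we get $\langle \nabla u_f(x - t\nu(x)), \nu(x)\rangle \leq -\underline\delta/2$ along the whole segment, provided $r_0$ is small and $V_0$ is close to $1$. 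Hence $t \mapsto u_f(x - t\nu(x))$ is strictly increasing and attains the value $\mu_{f,V_0}$ at most once. For \emph{existence}, note that $u_f(x) = 0$ on $\partial \Omega$ while $\mu_{f,V_0} > 0$ (the strong maximum principle forces $\{u_f > 0\} = \Omega$), and $u_f(x - r_0\nu(x))$ is uniformly close to $w_\Omega(x - r_0\nu(x))$, which is bounded below by a positive constant; since $\mu_{f,V_0} \to 0$ by \eqref{Eq:ConvMultiplicateur}, we obtain $u_f(x - r_0\nu(x)) > \mu_{f,V_0}$ for $V_0$ close enough to $1$, and the intermediate value theorem supplies a crossing.

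This produces a continuous (in fact $\mathscr C^{1,\alpha}$, by the implicit function theorem) function $\tau : \partial \Omega \to (0,r_0)$ with $\partial \omega_{f,V_0} = \{x - \tau(x)\nu(x) : x \in \partial \Omega\}$. Since $\partial \Omega$ is connected by assumption on $\Omega$, the set $\partial \omega_{f,V_0}$ is the continuous image of a connected set and is therefore connected. The only obstacle I anticipate is bookkeeping: one must verify that the thresholds $\e_0'$, $r_0$ and $\underline\delta$ can be chosen uniformly in $f \in \mathcal F(V_0)$, but this uniformity is already built into Lemma \ref{Le:V0Lagrange}.
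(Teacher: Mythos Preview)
Your proposal is correct and follows essentially the same route as the paper: both arguments place $\partial\omega_{f,V_0}$ inside a tubular neighbourhood of $\partial\Omega$, use the graph property over $\partial\Omega$ (each inward normal meets $\partial\omega_{f,V_0}$ at most once), and conclude from the connectedness of $\partial\Omega$. The paper argues by contradiction and is terser, while you construct the global bijection $\partial\Omega\to\partial\omega_{f,V_0}$ explicitly via monotonicity of $t\mapsto u_f(x-t\nu(x))$ and the intermediate value theorem; the added existence step (surjectivity of the normal projection) makes your version slightly more self-contained, but the core mechanism is the same.
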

\begin{proof}[Lemma \ref{Le:OmegaConnectedDirichlet}]
We argue by contradiction. Then there exists a sequence $\{V_k\}_{k\in \N}$ converging to 1 and, for any $k\in \N$, a function $f_k\in \mathcal F(V_k)$ such that $\partial \omega_{f_k,V_0}$ has at least two connected components $\Sigma_{k,1}\,, \Sigma_{k,2}$. We pick, for any $k\in \N$, two points $x_{k,i}\in \Sigma_{k,i}$ ($i=1,2$). First of all, for any closure point $y_i$ of the sequence $\{x_{k,i}\}_{k\in \N}$ we have $y_i\in \partial \O$ ($i=1,2$). This is simply due to the fact that $u_{f_k}(x_{k,i})=\mu_{f_k,V_0}\underset{k\to \infty}\rightarrow 0$. Hence, from \eqref{Eq:CvTorsion} we have $w_\Omega(y_i)=0$, and we conclude by the maximum principle applied to the torsion function. Consequently, for $k$ large enough, $\partial \omega_{k,i}\subset \O(r_0')$ where $\O(r_0')$ is defined in \eqref{Eq:Or0} and $r_0'$ is small enough to ensure that $\partial \omega_{f,V_0}$ is a graph over $\partial \O$. From the connectedness of $\partial \O$, we reach the desired contradiction.
\end{proof}
From the proof of this lemma we see that the only thing required is for $\partial \omega_{f,V_0}$ to be a graph over $\partial \O$ whence we may choose $\e_0=\e_0'$ where $\e_0$ is given in Lemma \ref{Le:V0Lagrange}.

\subsection{Applications of the quantitative bathtub principle}
The first step of the proof is to apply the quantitative bathtub principle \cite[Proposition 26]{MRB2020} (see also {\cite{Cianchi2008}}). Before we state this result in the form that will be used let us recall the bathtub principle \cite[Theorem 1.14]{LiebLoss}:  let $u\in \mathscr C^0(\O)$ and let $V_0\in (0;1)$ be such that there exists a unique $\mu_{u,V_0}$ such that 
\begin{equation}\label{Eq:LevelSetu}\mathrm{Vol}(\{u>\mu_{u,V_0}\})=V_0\mathrm{Vol}(\O)=\mathrm{Vol}(\{u\geq \mu_{u,V_0}\}).\end{equation} If there exists a unique $\mu_{u,V_0}$ such that \eqref{Eq:LevelSetu} is satisfied we define 
\begin{equation}\label{Eq:omegau}
\omega_{u,V_0}:=\left\{u>\mu_{u,V_0}\right\}.
\end{equation}
Then:
\begin{equation}\label{Eq:Bathtub}
f_u:=\mathds 1_{\omega_{u,V_0}}\text{ is the unique solution of }\max_{f \in \mathcal F(V_0)}\int_\O f u.\end{equation}
The goal of the quantitative bathtub principle is to quantify the optimality of $f_u$.

\begin{proposition} \cite[Proposition 26]{MRB2020} \label{Pr:QuantitativeBathtub} Let $\alpha \in (0;1)$. For any triplet of constants $M,P\,, \delta_1>0$ there exists a constant $c_1>0$ such that the following holds: for any function $u$ satisfying
\begin{enumerate}
\item there exists a unique $\mu_{u,V_0}$ that satisfies \eqref{Eq:LevelSetu},
\item $\omega_{u,V_0}$ has a $\mathscr C^1$ boundary and 
\[ \inf_{\partial \omega_{u,V_0}}\left|\frac{\partial u}{\partial \nu}\right|\geq \delta_1>0,\]
\item $\Vert u\Vert_{\mathscr C^{1,\alpha}(\O)}\leq M$ and $\mathrm{Per}(\omega_{u,V_0})\leq P,$
\end{enumerate}
then, defining $f_u:=\mathds 1_{\omega_{u,V_0}}$
\begin{equation}\forall f \in \mathcal F(V_0)\,, \int_\O f u\leq \int_\O f_u u-c_1 \Vert f-f_u\Vert_{L^1(\O)}^2.\end{equation}
\end{proposition}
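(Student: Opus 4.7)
The plan is to reduce the inequality to a one-sided geometric estimate near the level surface $\Sigma := \partial \omega_{u,V_0}$, then to apply a bathtub rearrangement combined with a coarea slab estimate. Write $\mu := \mu_{u,V_0}$ and $g := f - f_u$; the volume constraint gives $\int_\O g = 0$, and by construction $g \geq 0$ on $\{u \leq \mu\}$ while $g \leq 0$ on $\{u > \mu\}$, so $\int_\O g^+ = \int_\O g^- = \tfrac12 \|f - f_u\|_{L^1(\O)} =: \eta$. Adding $\mu \int_\O g = 0$ rewrites
\[
\int_\O (f_u - f)\,u \;=\; \int_{\{u \leq \mu\}} g^+(\mu - u) \,+\, \int_{\{u > \mu\}} g^-(u - \mu),
\]
so it suffices to prove, for every measurable $h : \O \to [0;1]$ supported in $\{u \leq \mu\}$ with $\int_\O h = \eta$, the estimate $\int_\O h(\mu - u) \geq c\,\eta^2$ with $c = c(M,P,\delta_1) > 0$, together with the symmetric bound on $\{u > \mu\}$.

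The $\mathscr C^{1,\alpha}$ bound together with $|\partial_\nu u| \geq \delta_1$ on $\Sigma$ yields a tubular neighbourhood $U$ of $\Sigma$ of uniform width $\rho = \rho(M,\delta_1) > 0$ on which $|\nabla u| \geq \delta_1/2$, by the quantitative H\"older continuity of $\nabla u$. Fubini in tubular coordinates around $\Sigma$ combined with $\operatorname{Per}(\omega_{u,V_0}) \leq P$ then produces the slab estimate
\[
\bigl|\{\mu - s < u \leq \mu\}\bigr| \;\leq\; Q\,s \quad\text{for every } s \in (0;\rho\delta_1/2],\qquad Q = Q(M,P,\delta_1).
\]
The bathtub principle applied to minimising $\int h(\mu - u)$ over admissible $h$ of mass $\eta$ selects $h^* := \mathds 1_{\{\mu - t \leq u \leq \mu\}}$, with $t$ chosen so the strip has measure $\eta$; the slab estimate forces $t \geq \eta/Q$ whenever $\eta \leq Q\rho\delta_1/2$, and the layer-cake formula then gives
\[
\int_\O h^*(\mu - u) \;=\; \int_0^t \bigl|\{\mu - u > s\}\cap \operatorname{supp}(h^*)\bigr|\,ds \;\geq\; \int_0^{\eta/Q}(\eta - Qs)\,ds \;=\; \frac{\eta^2}{2Q}.
\]

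The main obstacle is to keep the constants $\rho$, $Q$, and hence $c_1$, \emph{uniform} in $u$: this demands a quantitative tubular neighbourhood construction controlled explicitly by the H\"older modulus $M$ and the gradient bound $\delta_1$, while $Q$ additionally uses the perimeter bound $P$ to control the transverse cross-section of the slab. A secondary point is the regime $\eta > Q\rho\delta_1/2$, in which the minimising strip exits $U$: since $h \leq 1$ and the slab bound gives $|\{\mu - \rho\delta_1/2 < u \leq \mu\}| \leq Q\rho\delta_1/2$, the mass of $h$ in $\{u \leq \mu - \rho\delta_1/2\}$ is at least $\eta - Q\rho\delta_1/2$ and, for $\eta \gtrsim Q\rho\delta_1$, comparable to $\eta$; using $\mu - u \geq \rho\delta_1/2$ on that set yields $\int h(\mu - u) \gtrsim \rho\delta_1\,\eta$, which, since $\eta \leq \operatorname{Vol}(\O)$, dominates a multiple of $\eta^2$. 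The symmetric argument on $\{u > \mu\}$ is identical, and taking $c_1$ to be the minimum of the two resulting constants completes the proof.
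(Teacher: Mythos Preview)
The paper does not prove this proposition; it is quoted from \cite[Proposition~26]{MRB2020} and used as a black box, so there is no in-paper argument to compare against. Your strategy (decompose $\int(f_u-f)u$ into the two nonnegative pieces $\int g^+(\mu-u)$ and $\int g^-(u-\mu)$, minimise each over admissible $h$ by the bathtub principle, and control the minimiser via a linear slab bound coming from a tubular neighbourhood of $\Sigma$) is the standard route and matches what one expects from \cite{MRB2020}.

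There is, however, a genuine gap in your slab estimate. You claim $|\{\mu-s<u\le\mu\}|\le Qs$ for all $s\le\rho\delta_1/2$ from ``Fubini in tubular coordinates around $\Sigma$'', but this tacitly assumes $\{\mu-s<u\le\mu\}\subset U$. Hypotheses (1)--(3) do not rule out an ``island'': a region of $\{u\le\mu\}$ far from $\Sigma$ (around an interior local maximum of $u$) on which $u$ sits within $\epsilon$ of $\mu$. Such a configuration is fully compatible with $\|u\|_{\mathscr C^{1,\alpha}}\le M$ and with the gradient bound on $\Sigma$, yet for $s>\epsilon$ the island contributes a fixed positive measure to the slab without lying in $U$, so no uniform bound $\phi(s)\le Qs$ can hold. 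The same phenomenon on the $\{u>\mu\}$ side (an interior local minimum of $u$ just above $\mu$) would simultaneously kill the symmetric estimate, so splitting into two one-sided bounds does not save the argument. In the present paper this is never an issue, because Lemma~\ref{Le:V0Lagrange} shows that for $V_0$ close to $1$ the entire slab lies in a neighbourhood of $\partial\Omega$ on which $|\nabla u_f|$ is uniformly bounded below; but as a self-contained proof under (1)--(3) alone, your slab step needs either an additional hypothesis (a gradient lower bound on a full neighbourhood of the slab, not just on $\Sigma$) or a different argument.
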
 

\paragraph{Application to the thresholding algorithm.}
The goal of this paragraph is to apply Proposition \ref{Pr:QuantitativeBathtub} to the sequence generated by the thresholding algorithm.
\begin{lemma}\label{Le:QuantitativeBathtubThresholding}
Let $\e_0,\delta_0$ be in the conditions of Lemma \ref{Le:V0Lagrange}. For any $V_0\in (1-\e_0;1)$, for any $f_0\in \mathcal F(V_0)$, the sequence $\{f_k\}_{k\in \N}$ is uniquely defined (in the sense that for any $k\in \N$ there holds $\mathrm{Vol}\left(\{u_{f_k}=\mu_{f_k,V_0}\}\right)=0$) and there holds
\begin{equation}
\sum_{k=0}^\infty \Vert f_{k+1}-f_k\Vert_{L^1(\O)}^2<\infty.
\end{equation}
\end{lemma}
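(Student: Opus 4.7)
\medskip

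\noindent\textbf{Plan.} The claim has two parts: well-posedness of the iteration, and the square-summability of the $L^1$ increments. Well-posedness is essentially immediate from Lemma \ref{Le:V0Lagrange}: for $V_0 \in (1-\e_0;1)$ and every $f \in \mathcal F(V_0)$ one has $\mathrm{Vol}(\{u_f=\mu_{f,V_0}\})=0$, so applying this to $f=f_k$ at each step shows that the threshold $c_k=\mu_{f_k,V_0}$ is uniquely determined and that $f_{k+1}=\mathds 1_{\omega_{f_k,V_0}}\in \mathcal F(V_0)$, so the algorithm is indeed well defined.

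\medskip

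For the summability, the plan is to combine the concavity of $\mathscr E$ (Lemma \ref{Le:BasicDirichlet}) with the quantitative bathtub principle (Proposition \ref{Pr:QuantitativeBathtub}) applied, at each step, to the switch function $u_{f_k}$. Concretely, concavity yields
\[
\mathscr E(f_{k+1})-\mathscr E(f_k)\leq \dot{\mathscr E}(f_k)[f_{k+1}-f_k]=-\int_\O u_{f_k}(f_{k+1}-f_k),
\]
and by construction $f_{k+1}$ is the (unique, by Lemma \ref{Le:V0Lagrange}) bathtub maximiser of $f\mapsto \int_\O u_{f_k}f$ on $\mathcal F(V_0)$. Taking $f=f_k$ in Proposition \ref{Pr:QuantitativeBathtub} therefore gives
\[
\int_\O u_{f_k}(f_{k+1}-f_k)\geq c_1 \Vert f_{k+1}-f_k\Vert_{L^1(\O)}^2,
\]
which combined with the previous display produces the key one-step descent inequality
\[
\mathscr E(f_{k+1})\leq \mathscr E(f_k)-c_1 \Vert f_{k+1}-f_k\Vert_{L^1(\O)}^2.
\]
Summing telescopically from $k=0$ to $N$ and using that $\mathscr E$ is bounded on $\mathcal F(V_0)$ (indeed $\mathscr E(f)=-\tfrac12\int_\O u_f f$ with $u_f,f$ uniformly bounded) yields $c_1\sum_{k=0}^N\Vert f_{k+1}-f_k\Vert_{L^1(\O)}^2\leq \mathscr E(f_0)-\inf_{\mathcal F(V_0)}\mathscr E<\infty$, and letting $N\to\infty$ gives the claim.

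\medskip

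\noindent\textbf{Main obstacle.} The delicate point is that the constant $c_1$ in Proposition \ref{Pr:QuantitativeBathtub} must be \emph{uniform in $k$}, so it can be pulled out of the sum. This requires that the triplet $(M,P,\delta_1)$ in the hypotheses of the proposition can be chosen independently of $k$. But precisely this is the content of the preliminary analysis: the $\mathscr C^{1,\alpha}$ bound \eqref{Eq:TorsionHolder} provides a uniform $M$; the perimeter bound \eqref{Eq:EstPerim} provides a uniform $P$; and the uniform lower bound on $|\partial u_{f_k}/\partial\nu|$ on $\partial\omega_{f_k,V_0}$ in item~(2) of Lemma \ref{Le:V0Lagrange} provides a uniform $\delta_1=\delta_0$. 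All three bounds hold uniformly in $f\in\mathcal F(V_0)$ as soon as $V_0\in(1-\e_0;1)$, so $c_1$ can indeed be chosen independent of $k$, and the argument closes.
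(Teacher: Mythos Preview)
Your proposal is correct and follows essentially the same route as the paper's proof: well-posedness from Lemma~\ref{Le:V0Lagrange}, the one-step descent inequality obtained by combining concavity of $\mathscr E$ with Proposition~\ref{Pr:QuantitativeBathtub} applied to $u_{f_k}$, uniformity of $c_1$ secured via \eqref{Eq:TorsionHolder}, \eqref{Eq:EstPerim} and item~(2) of Lemma~\ref{Le:V0Lagrange}, and a telescoping sum bounded by $\mathscr E(f_0)-\inf\mathscr E$. The only cosmetic difference is in justifying the lower bound on $\mathscr E$: the paper invokes the variational formulation \eqref{Eq:DirichletEnergy} directly, while you use the equivalent identity $\mathscr E(f)=-\tfrac12\int_\O f u_f$ together with uniform $L^\infty$ bounds; both are valid.
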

\begin{proof}[Proof of Lemma \ref{Le:QuantitativeBathtubThresholding}]
Let $\e_0\,, \delta_0$ in the conditions of Lemma \ref{Le:V0Lagrange}. We fix $V_0\in (1-\e_0;1)$. The fact that for any $k\in \N$ we have 
\[ \mathrm{Vol}\left(\{u_{f_k}=\mu_{f_k,V_0}\}\right)=0\] is a conclusion of Lemma \ref{Le:V0Lagrange}. 

Since $\O$ is $\mathscr C^2$, by elliptic regularity, we know that for any $p\in [1;+\infty)$
\[ \sup_{k\in \N}\Vert u_{f_k}\Vert_{W^{2,p}(\O)}=M_p<\infty\]whence from Sobolev embeddings \cite[Theorem 12.5]{Leoni}, fixing $s\in (0;1)$, we have 
\[ \sup_{k\in \N}\Vert u_{f_k}\Vert_{\mathscr C^{1,s}(\O)}=:M<\infty.\] Furthermore, 
\[ \inf_{k\in \N}\min_{\partial \omega_{u_{f_k},V_0}}\left|\frac{\partial u_{f_k}}{\partial \nu}\right|\geq \delta_0>0.\]Finally, estimate \eqref{Eq:EstPerim} allows us to apply Proposition \ref{Pr:QuantitativeBathtub} to deduce that there exists a constant $c_1>0$ such that
\[ \forall k\in \N\,, \int_\O u_kf_k\leq \int_\O u_k f_{k+1}-c_1\Vert f_{k+1}-f_k\Vert_{L^1(\O)}^2\] or, equivalently, 
\begin{equation}\label{Eq:Credo}\forall k\in \N\,, c_1\Vert f_{k+1}-f_k\Vert_{L^1(\O)}^2\leq \int_\O u_k \left(f_{k+1}-f_k\right).\end{equation} From the concavity of $\mathscr E$ (Lemma \ref{Le:BasicDirichlet})  we also know that
\[ \forall k\in \N\,, \mathscr E(f_{k+1})-\mathscr E(f_k)\leq \dot{\mathscr E}(f_k)[f_{k+1}-f_k]=\int_\O u_k\left(f_k-f_{k+1}\right).\] Thus we obtain
\begin{equation}\label{Eq:Estimatek}\forall k\in \N\,, c_1\Vert f_{k+1}-f_k\Vert_{L^1(\O)}^2\leq \mathscr E(f_k)-\mathscr E(f_{k+1}).\end{equation}
However, there exists $\underline E$ such that 
\begin{equation}\label{Eq:LowerDirichlet}
\forall V_0\in (0;1)\,, \forall f \in \mathcal F(V_0)\,, \underline E\leq \mathscr E(f).\end{equation}
This is a direct consequence of the variational formulation of the Dirichlet energy \eqref{Eq:DirichletEnergy}. Summing the estimates \eqref{Eq:Estimatek} for $k\in \N$ we get 
\[ c_1\sum_{k=0}^\infty \Vert f_{k+1}-f_k\Vert_{L^1(\O)}^2\leq \mathscr E(f_0)-\underline E<\infty.\] As $c_1>0$, the proof is concluded.
\end{proof}

The purpose of this lemma is to give us more insight into the possible asymptotic behaviours of $\{f_k\}_{k\in \N}$. We detail this in the next paragraph.

Let us start with a basic lemma:
\begin{lemma}\label{Le:ClosurePointBgbg} Let $\e_0$ be in the conditions of Lemma \ref{Le:V0Lagrange}. For any $V_0\in (1-\e_0;1)$, for any initialisation $f_0\in \mathcal F(V_0)$, any $L^\infty-*$ closure point $f_\infty$ of the sequence $\{f_k\}_{k\in \N}$ generated by the thresholding algorithm is a bang-bang function:
\begin{equation}\exists E_\infty\subset \O\,, f_\infty=\mathds 1_{E_\infty}.\end{equation} Furthermore $E_\infty=\{u_{f_\infty}>\mu_{u_{f_\infty},V_0}\}$, which is uniquely defined. In other words, $E_\infty$ is a critical set in the sense of Definition \ref{De:CriticalPoint}.

\end{lemma}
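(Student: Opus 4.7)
The plan is to combine the summability estimate from Lemma \ref{Le:QuantitativeBathtubThresholding} with elliptic compactness and the non-degeneracy of $|\nabla u_f|$ near level sets provided by Lemma \ref{Le:V0Lagrange}. First, since $\sum_k \Vert f_{k+1}-f_k\Vert_{L^1(\O)}^2<\infty$, we have in particular $\Vert f_{k+1}-f_k\Vert_{L^1(\O)}\to 0$. Now let $f_\infty$ be an $L^\infty-*$ closure point of $\{f_k\}_{k\in\N}$, and extract a subsequence $f_{k_j}\rightharpoonup^* f_\infty$. Using the uniform $\mathscr C^{1,\alpha}$ bound \eqref{Eq:TorsionHolder} on $u_{f_k}$ and Arzelà-Ascoli, I can extract a further subsequence (still denoted $\{k_j\}$) along which $u_{f_{k_j}}\to u_{f_\infty}$ strongly in $\mathscr C^{1,\alpha'}(\O)$ for any $\alpha'<\alpha$; the limit is indeed $u_{f_\infty}$ by the weak-strong continuity of the solution map $f\mapsto u_f$.

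Next I would pin down the level $c_{k_j}=\mu_{f_{k_j},V_0}$. By Lemma \ref{Le:V0Lagrange} the sequence $\{\mu_{f_k,V_0}\}_{k\in\N}$ is uniformly bounded, so up to extracting another subsequence $c_{k_j}\to c_\infty$. The key step is to identify $c_\infty=\mu_{f_\infty,V_0}$ and to obtain $L^1$ convergence of the level sets. Lemma \ref{Le:V0Lagrange} applied to $f_\infty\in\mathcal F(V_0)$ ensures $|\nabla u_{f_\infty}|\geq \delta_0$ on $\partial\omega_{f_\infty,V_0}$, so the distribution function $c\mapsto \mathrm{Vol}(\{u_{f_\infty}>c\})$ is continuous at $c=\mu_{f_\infty,V_0}$ (no plateau of positive measure at that level). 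Combining the uniform convergence of $u_{f_{k_j}}$ with the uniform non-degeneracy of its gradient along each $\partial\omega_{f_{k_j},V_0}$, standard arguments yield that $\mathds 1_{\{u_{f_{k_j}}>c_{k_j}\}}\to \mathds 1_{\{u_{f_\infty}>c_\infty\}}$ in $L^1(\O)$, and the volume identity forces $c_\infty=\mu_{f_\infty,V_0}$, so this limit equals $\mathds 1_{\omega_{f_\infty,V_0}}$.

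Finally, by definition of the thresholding algorithm, $f_{k_j+1}=\mathds 1_{\{u_{f_{k_j}}>c_{k_j}\}}$. The previous step gives $f_{k_j+1}\to \mathds 1_{\omega_{f_\infty,V_0}}$ in $L^1(\O)$. On the other hand, $\Vert f_{k_j+1}-f_{k_j}\Vert_{L^1(\O)}\to 0$, hence $f_{k_j}\to \mathds 1_{\omega_{f_\infty,V_0}}$ strongly in $L^1(\O)$, which in particular gives weak-$*$ convergence in $L^\infty$. By uniqueness of the weak-$*$ limit, $f_\infty=\mathds 1_{\omega_{f_\infty,V_0}}$, so $f_\infty$ is bang-bang with $E_\infty=\omega_{f_\infty,V_0}=\{u_{f_\infty}>\mu_{f_\infty,V_0}\}$, and this set is uniquely defined by the first conclusion of Lemma \ref{Le:V0Lagrange}. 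In particular $E_\infty$ is a critical set in the sense of Definition \ref{De:CriticalPoint}.

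The main obstacle is the level-set convergence step, since a priori two thresholds ($c_{k_j}$ and the functions $u_{f_{k_j}}$) vary simultaneously; the uniform estimates $|\nabla u_{f_k}|\geq\delta_0$ on $\partial\omega_{f_k,V_0}$ and the uniform $\mathscr C^{1,\alpha}$ control furnished by Lemma \ref{Le:V0Lagrange} are exactly what is needed to convert $\mathscr C^{1,\alpha'}$ convergence of the potentials into strong $L^1$ convergence of their characteristic functions at the prescribed volume level.
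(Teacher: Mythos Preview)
Your argument is correct. The route differs from the paper's in the compactness mechanism and in one point of bookkeeping.

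The paper obtains $L^1$ (and Hausdorff) convergence of the sets $\omega_k=\{u_{f_k}>\mu_{f_k,V_0}\}$ by invoking the uniform $\varepsilon$-cone property coming from \eqref{Eq:EstPerim} and the geometric compactness theorem \cite[Theorem~2.4.10]{HenrotPierre}; it then identifies the limit set as a superlevel set of $u_{f_\infty}$ via the Hausdorff convergence of boundaries. You instead stay on the analytic side: $\mathscr C^{1,\alpha'}$ convergence of the potentials together with the uniform lower bound on $|\nabla u_f|$ near the relevant level (Lemma~\ref{Le:V0Lagrange}) gives directly the $L^1$ convergence of the superlevel sets at the moving thresholds, and the volume constraint pins down $c_\infty=\mu_{f_\infty,V_0}$. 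This avoids the cone-property machinery at the cost of importing Lemma~\ref{Le:QuantitativeBathtubThresholding} to get $\Vert f_{k+1}-f_k\Vert_{L^1}\to 0$, which you then use to transfer the strong $L^1$ limit of $f_{k_j+1}$ back to $f_{k_j}$ and hence to $f_\infty$. The paper's write-up performs the same transfer (the sentence ``as $f_{k+1}=\mathds 1_{\omega_k}$ we deduce that $f_\infty=\mathds 1_{\omega_\infty}$'' tacitly identifies the limit of the shifted subsequence with that of the original one), but you make this step explicit, which is a genuine improvement in clarity.
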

\begin{proof}[Proof of Lemma \ref{Le:ClosurePointBgbg}]
We do not relabel the $L^\infty-*$ converging subsequence and write it $\{f_k\}_{k\in \N}$; its closure point is called $f_\infty$. We recall that $\e_0$ is given by Lemma \ref{Le:V0Lagrange}. By elliptic regularity, for any $p\in [1;+\infty)$,
\[ \sup_{k\in \N}\Vert u_{f_k}\Vert_{W^{2,p}(\O)}<\infty.\]
By Sobolev embeddings, for any $\alpha\in (0;1)$,
\[ u_{f_k}\underset{k\to \infty}{\overset{\mathscr C^{1,\alpha}(\O)}\longrightarrow}u_{f_\infty}.\] 
Define, for any $k\in \N$, $\omega_{k}:=\{u_{f_{k}}\geq \mu_{u_{f_k},V_0}\}$. From \eqref{Eq:EstPerim} we deduce that $\{\omega_k\}_{k\in \N}$ satisfies a uniform $\e$-cone property (\cite[Remark 2.4.8]{HenrotPierre}). From \cite[Theorem 2.4.10]{HenrotPierre} the sequence $\{\omega_k\}_{k\in \N}$ converges in the $L^1$-topology to a measurable subset $\omega_\infty$ of $\O$, with $\mathrm{Vol}(\omega_\infty)=V_0\mathrm{Vol}(\O)$; as $f_{k+1}=\mathds 1_{\omega_k}$ we deduce that $f_\infty=\mathds 1_{\omega_\infty}$. Moreover, still from \cite[Theorem 2.4.10]{HenrotPierre} the sequence $\{\partial \omega_k\}_{k\in \N}$ converges to $\partial \omega_\infty$ in the Hausdorff distance. Consequently, passing to the limit in $\partial \omega_k=\{u_{f_k}=\mu_{f_k,V_0}\}$ we deduce that $u_{f_\infty}$ is constant on $\partial \omega_\infty$. $\omega_\infty$ is thus necessarily the superlevel set of $u_{f_\infty}$ with volume $V_0$, which concludes the proof.

\end{proof}

Using this result we can provide a simplification of the behaviour of the sequence $\{f_k\}_{k\in \N}$.
\begin{lemma}\label{Le:MultipleClosurePoints}
Let $\e_0$ be chosen as in Lemma \ref{Le:V0Lagrange}. For any $V_0\in (1-\e_0;1)$, for any initialisation $f_0\in \mathcal F(V_0)$, let $\{f_k\}_{k\in \N}$ be the sequence generated by the thresholding algorithm. Then:
\begin{enumerate}
\item Either $\{f_k\}_{k\in \N}$ has a unique weak $L^\infty-*$ closure point $f_\infty$, in which case $f_k\underset{k\to \infty}\rightarrow f_\infty$ strongly in $L^1(\O)$,
\item Or $\{f_k\}_{k\in \N}$ has an infinite number of closure points.\end{enumerate}
\end{lemma}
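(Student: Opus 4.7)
\textbf{Proof plan for Lemma \ref{Le:MultipleClosurePoints}.}
The starting input is Lemma \ref{Le:QuantitativeBathtubThresholding}, which gives $\sum_{k\in\N}\Vert f_{k+1}-f_k\Vert_{L^1(\O)}^2<\infty$, and in particular
\[
\Vert f_{k+1}-f_k\Vert_{L^1(\O)}\underset{k\to\infty}{\longrightarrow}0.
\]
On the other hand, $\mathcal F(V_0)$ is weak-$*$ compact (Banach--Alaoglu together with the convexity and closedness recalled in Section \ref{Se:IntroPreliminaire}), and $L^1(\O)$ being separable this weak-$*$ topology is metrisable on $\mathcal F(V_0)$. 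By Lemma \ref{Le:ClosurePointBgbg}, any weak-$*$ closure point $f_\infty$ is bang-bang, written $f_\infty=\mathds 1_{E_\infty}$ with $\mathrm{Vol}(E_\infty)=V_0\mathrm{Vol}(\O)$. Since all iterates $f_k$ for $k\geq 1$ are also bang-bang with the same volume, a standard computation shows that on this subfamily weak-$*$ convergence coincides with strong $L^1$ convergence: indeed, for $\mathds 1_{E_n}\rightharpoonup \mathds 1_E$ weak-$*$,
\[
\Vert\mathds 1_{E_n}-\mathds 1_E\Vert_{L^1(\O)}=\mathrm{Vol}(E_n)+\mathrm{Vol}(E)-2\int_\O \mathds 1_{E_n}\mathds 1_E\longrightarrow 0.
\]
So I only need to work in the $L^1$-metric from now on.

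The plan is then to establish the following dichotomy by contradiction: assume the set of closure points is \emph{finite} and consists of $N\geq 2$ elements $p_1,\dots,p_N$, and show that this contradicts $\Vert f_{k+1}-f_k\Vert_{L^1(\O)}\to 0$. Set $\delta:=\frac13\min_{i\neq j}\Vert p_i-p_j\Vert_{L^1(\O)}>0$. The first step is to prove that for $k$ large enough, $f_k$ lies in the $L^1$-ball of radius $\delta$ around at least one of the $p_i$'s. If not, an extraction produces a subsequence of $(f_k)$ staying at $L^1$-distance $\geq\delta$ from every $p_i$; by weak-$*$ compactness this subsequence admits a further weak-$*$ converging subsequence, whose limit is a closure point of the original sequence, hence one of the $p_i$'s, and the convergence is in fact in $L^1$ by the equivalence above, a contradiction. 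The $\delta$-ball around the $p_i$'s being pairwise disjoint by definition of $\delta$, for $k$ large the index $i(k)\in\{1,\dots,N\}$ with $\Vert f_k-p_{i(k)}\Vert_{L^1(\O)}<\delta$ is uniquely defined.

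The second step is the triangle-inequality argument: since $\Vert f_{k+1}-f_k\Vert_{L^1(\O)}\to 0$ there is $k_0$ such that for $k\geq k_0$ we have $\Vert f_{k+1}-f_k\Vert_{L^1(\O)}<\delta$; then
\[
\Vert p_{i(k)}-p_{i(k+1)}\Vert_{L^1(\O)}\leq \Vert p_{i(k)}-f_k\Vert_{L^1(\O)}+\Vert f_k-f_{k+1}\Vert_{L^1(\O)}+\Vert f_{k+1}-p_{i(k+1)}\Vert_{L^1(\O)}<3\delta,
\]
which, by the choice of $\delta$, forces $i(k+1)=i(k)$. Thus $i(k)$ is eventually constant, so $(f_k)$ stays in a single $\delta$-ball and can only have that one $p_i$ as a closure point, contradicting $N\geq 2$. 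Hence the set of closure points either has cardinality exactly one or is infinite.

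It remains to treat the case of a unique closure point $f_\infty$. Any subsequence of $(f_k)$ has, by weak-$*$ compactness, a further subsequence converging weak-$*$ (hence in $L^1$) to a closure point, necessarily $f_\infty$. By the classical Urysohn-type subsequence principle this entails $f_k\to f_\infty$ in $L^1(\O)$, which concludes the proof. The only genuinely non-trivial step is the finite-versus-infinite dichotomy, and the main subtlety there is to correctly handle the interplay between the weak-$*$ and strong $L^1$ topologies; the observation that both coincide on bang-bang functions of fixed volume is what makes the triangle-inequality argument available.
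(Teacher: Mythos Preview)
Your proof is correct and takes a genuinely different route from the paper's. The paper argues constructively: starting from two distinct closure points $f_{\infty,1},f_{\infty,2}$, it builds a third one by looking at the (infinite) set of indices $k$ for which $f_k$ is $\delta_1$-far from both, extracts a weak-$*$ limit $f_{\infty,3}$, and then proves $\Vert f_{\infty,3}-f_{\infty,i}\Vert_{L^1}\geq\delta_1$ via a sign-decomposition $f_{i_k}-f_{\infty,1}=h_{k,+}\mathds 1_{F_{\infty,1}^c}-h_{k,-}\mathds 1_{F_{\infty,1}}$ (this step only uses that $f_{\infty,1}$ is bang-bang, not that $f_{\infty,3}$ is). Iterating produces infinitely many closure points.

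You instead run the standard connectedness argument: assume a finite cluster set with $N\geq 2$ points, use $\Vert f_{k+1}-f_k\Vert_{L^1}\to 0$ and a triangle inequality to trap the tail of the sequence in a single $\delta$-ball, contradiction. Your key simplifying observation is that on bang-bang functions of fixed volume, weak-$*$ convergence already implies $L^1$ convergence (via $\Vert\mathds 1_{E_n}-\mathds 1_E\Vert_{L^1}=2\mathrm{Vol}(E)-2\int\mathds 1_{E_n}\mathds 1_E$); combined with Lemma \ref{Le:ClosurePointBgbg} this lets you work entirely in the $L^1$ metric and avoids the paper's decomposition trick. This is cleaner and more conceptual---it is essentially the classical fact that the $\omega$-limit set of a sequence with $d(x_{k+1},x_k)\to 0$ in a compact metric space is connected---whereas the paper's version is more hands-on but self-contained (it does not appeal to the bang-bang property of the \emph{new} limit to get the distance lower bound). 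Both are fine; yours is arguably the more transparent presentation.
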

\begin{proof}[Proof of Lemma \ref{Le:MultipleClosurePoints}]Observe that if $\{f_k\}_{k\in \N}$ has a weak $L^\infty-*$ closure point $f_\infty$, Lemma \ref{Le:ClosurePointBgbg} ensures that $f_\infty$ is an extreme point of $\mathcal F(V_0)$, whereby  $f_\infty$ is actually a strong $L^1$ closure point of the sequence.

We prove that if the sequence $\{f_k\}_{k\in\N}$ has two distinct closure points $f_{\infty,1}\neq f_{\infty,2}$,  then it has infinitely many closure points. First of all, from Lemma \ref{Le:QuantitativeBathtubThresholding}, 
\begin{equation}\label{Eq:MG}
\Vert f_{k+1}-f_k\Vert_{L^1(\O)}\underset{k\to\infty}\rightarrow 0.
\end{equation}
By Lemma \ref{Le:ClosurePointBgbg} there exist $F_\infty^1\,, F_\infty^2\subset \O$ such that 
\[ f_{\infty,i}=\mathds 1_{F_{\infty,i}}\quad (i=1,2).\]
Let $\delta_1\in (0;\mathrm{dist}_{L^1}(F_{\infty,1},F_{\infty,2})/4)$ and define
\[ \N_1:=\{k\in \N: \min_{i=1,2}(\Vert f_k-f_{\infty,i}\Vert_{L^1(\O)})\geq \delta_1\}.\] From \eqref{Eq:MG}, $\N_1$ is infinite. We denote it as 
\[\N_1=\{i_0,\dots,i_k,\dots\}.\] The sequence $\{f_{i_k}\}_{k\in \N}$ contains an $L^\infty-*$ converging subsequence, still denoted by $\{f_{i_k}\}_{k\in \N}$. Let $f_{\infty,3}$ be its closure point and let us show that 
\[ \min_{i=1,2}(\Vert f_{\infty,3}-f_{\infty,i}\Vert_{L^1(\O)})\geq \delta_1.\]We only show 
\begin{equation}\label{Eq:Kv}
\Vert f_{\infty,3}-f_{\infty,1}\Vert_{L^1(\O)}\geq \delta_1
\end{equation} as the same proof would yield that $\Vert f_{\infty,3}-f_{\infty,2}\Vert_{L^1(\O)}\geq \delta_1$. To prove \eqref{Eq:Kv} we use in a crucial manner that $f_{\infty,1}$ is a bang-bang function (see Definition \ref{De:BangBang}). Consider $h_k:=f_{i_k}-f_{\infty,1}$. Then we have 

\[h_k=h_{k,+}\mathds 1_{F_{\infty,1}^c}-h_{k,-}\mathds 1_{F_{\infty,1}}
\] with $h_{k,\pm}\geq 0$. This is a simple consequence of the fact that $f_{\infty,1}$ is bang-bang. Furthermore, we have 
\[\int_{F_{\infty,1}^c} h_{k,+}+\int_{F_{\infty,1}}h_{k,-}\geq \delta.
\] 
We consider a weak closure point $h_{\infty,+}$ of $\{h_{k,+}\mathds 1_{F_{\infty,1}^c}\}_{k\in \N}$ and a weak closure point $h_{\infty,-}$ of $\{h_{k,-}\mathds 1_{F_{\infty,1}}\}_{k\in \N}$. We obtain
\[ \int_{F_{\infty,1}^c} h_{\infty,+}+\int_{F_{\infty,1}}h_{\infty,-}\geq \delta.\]
 Furthermore, by linearity of the weak convergence we have
\[ f_{\infty,3}-f_{\infty,1}=h_{\infty,+}\mathds 1_{F_{\infty,1}^c}-h_{\infty,-}\mathds 1_{F_{\infty,1}}.\] Consequently
\[ \Vert f_{\infty,3}-f_{\infty,1}\Vert_{L^1(\O)}\geq \delta,\] whence the conclusion. Thus we deduce that \eqref{Eq:Kv} holds and, adapting the reasoning for $f_{\infty,2}$, we obtain that 
\[ \min_{i=1,2}\left(\Vert f_{\infty,3}-f_{\infty,i}\Vert_{L^1(\O)}\right)\geq \delta.\] By Lemma \ref{Le:ClosurePointBgbg}, $f_{\infty,3}$ is a bang-bang function, and $\{f_k\}_{k\in \N}$ has three distinct closure points. It then suffices to iterate the procedure to construct  an infinite sequence of closure points. This concludes the proof.\end{proof}

\paragraph{The goal of the next sections.} To obtain the uniqueness of the closure point and the strong convergence to a local minimiser, we need to show that every critical point (in the sense of Definition \ref{De:CriticalPoint}) is actually a local minimiser and that these critical points are isolated. This is done using shape derivatives and diagonalisation of shape hessians.

\subsection{Qualitative study of critical points}
In this section we give an in-depth analysis of critical points (in the sense of Definition \ref{De:CriticalPoint}). We begin with the analyticity of the boundaries of critical sets.
\paragraph{Regularity of critical sets.} We will be using a shape derivative formalism and compute second order shape derivatives at critical shapes in order to conclude as to their minimality and to obtain a full convergence result for the thresholding algorithm. Doing so requires some regularity (at least $\mathscr C^3$ for second-order shape derivatives) of the boundary of critical sets. It should be noted that usually this type of regularity is proved for minimal sets (\emph{i.e.} for $E\subset \O$ such that $\mathds 1_E$ is a solution of \eqref{Pv:Dirichlet}). A paradigmatic result is the regularity of minimal sets in two dimensions \cite{Chanillo2008}. However, the hard part in proving this regularity is usually obtaining an estimate of the gradient of the state function on the boundary of the optimal set. Working with a large volume constraint allows to bypass this regularity problem, as gradient estimates are readily provided by Lemma \ref{Le:V0Lagrange}, and thus enable us to apply \cite[Lemma 3, Theorem 8]{Chanillo_2000}.
\begin{lemma}\label{Le:CriticalPointRegular}
Let $\e_0>0$ be as in Lemma \ref{Le:V0Lagrange}. Any critical set $E$ (in the sense of Definition \ref{De:CriticalPoint}) has a compact analytic boundary and, furthermore, is uniformly bounded in the $\mathscr C^2$ topology.
\end{lemma}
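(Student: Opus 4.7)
The plan is to combine the regularity already obtained in Lemma \ref{Le:V0Lagrange} with the free-boundary regularity result of Chanillo cited in the paper. The starting observation is that if $E$ is a critical set, then $f := \mathds{1}_E \in \mathcal F(V_0)$ and $\partial E = \{u_f = \mu_{f,V_0}\}$ coincides with $\partial \omega_{f,V_0}$. Lemma \ref{Le:V0Lagrange} therefore hands us immediately, and uniformly over critical sets, three useful facts: $\partial E$ is $\mathscr C^{1,\alpha}$ for every $\alpha \in (0,1)$; it is a graph over $\partial \O$ (hence compact, since $\partial \O$ is); and it satisfies the non-degeneracy $|\nabla u_f| \geq \delta_0 > 0$ on $\partial E$.

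Next I would apply \cite[Lemma 3, Theorem 8]{Chanillo_2000} to upgrade $\mathscr C^{1,\alpha}$ regularity to analyticity. The hypotheses of that result are that the free boundary be a level set of a solution of an elliptic equation with an $L^\infty$ right-hand side (here the indicator $\mathds{1}_E$), that it already be $\mathscr C^{1,\alpha}$, and that the normal derivative of the solution be bounded away from zero on it. All three are in hand, so each critical set has an analytic boundary.

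For the uniform $\mathscr C^2$ bound I would run a transmission Schauder argument. On $\O\setminus E$, the function $u_f$ is harmonic with Dirichlet data $0$ on $\partial \O$ and $\mu_{f,V_0}$ on $\partial E$. From the uniform $\mathscr C^{1,\alpha}$ control of $\partial E$ and of the boundary data ($\mu_{f,V_0}\to 0$ uniformly by Lemma \ref{Le:V0Lagrange}), Schauder estimates on the outer region yield a uniform $\mathscr C^{2,\alpha}$ bound on $u_f$ up to $\partial E$ from the outside. Using the graph representation $x = y - t(y)\nu(y)$ of $\partial E$ over $\partial \O$ and the implicit equation $u_f(y - t(y)\nu(y)) = \mu_{f,V_0}$, I would then differentiate twice and invert using the non-degeneracy of $\partial_\nu u_f$; this gives a uniform $\mathscr C^{2,\alpha}$ bound on the graph function $t$, hence on $\partial E$.

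The main obstacle I anticipate is the uniformity of the $\mathscr C^2$ bound rather than the analyticity itself, since the Chanillo argument produces analyticity of each individual critical set but no a priori quantitative estimate. Should the direct Schauder bootstrap prove delicate to make uniform, the fall-back is a compactness argument: by Lemma \ref{Le:V0Lagrange} the family of critical sets is precompact in $\mathscr C^{1,\alpha}$, and any $\mathscr C^{1,\alpha}$ limit is still of the form $\{u_{f_\infty} > \mu_{f_\infty,V_0}\}$ by stability of $u_f$ under $L^\infty$-$*$ convergence of $f$ (as in the proof of Lemma \ref{Le:ClosurePointBgbg}), hence an analytic critical set. A failure of uniform $\mathscr C^2$ bounds would produce, through Arzel\`a--Ascoli, a limit critical set whose boundary has infinite $\mathscr C^2$ norm, contradicting its individual analyticity.
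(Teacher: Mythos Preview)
Your approach to analyticity is essentially the paper's. One point worth sharpening: the hypothesis of \cite[Lemma 3]{Chanillo_2000} is not merely that the right-hand side be $L^\infty$, but that the equation take the autonomous form $\Delta w = h(w)$. The reason this holds here is precisely the criticality of $E$: since $E=\{u_{\mathds 1_E}>\mu_E\}$, the right-hand side $\mathds 1_E$ equals $h(u_{\mathds 1_E})$ with $h(\eta)=\mathds 1_{(\mu_E,\infty)}(\eta)$. You implicitly use this when you call $\partial E$ a level set, but it is the key structural point and the paper makes it explicit.

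Your argument for the uniform $\mathscr C^2$ bound, however, has a genuine gap in both branches. The Schauder step is circular: with only a $\mathscr C^{1,\alpha}$ boundary, boundary Schauder estimates for a harmonic function (even with constant Dirichlet data) give $\mathscr C^{1,\alpha}$ regularity up to the boundary, not $\mathscr C^{2,\alpha}$; you need the boundary to be $\mathscr C^{2,\alpha}$ to gain a derivative, which is what you are trying to prove. The compactness fallback also fails as stated: if $\Vert \partial E_k\Vert_{\mathscr C^2}\to\infty$ while $\partial E_k\to\partial E_\infty$ in $\mathscr C^{1,\alpha}$, nothing forces $\partial E_\infty$ to inherit an infinite $\mathscr C^2$ norm---higher norms are not lower semicontinuous under lower-norm convergence (think of $k^{-1}\sin(k^2 x)\to 0$ in $\mathscr C^0$ with $\mathscr C^1$ norm blowing up).

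The paper closes this gap differently: it observes that the proof of \cite[Lemma 3]{Chanillo_2000} is quantitative. The analytic estimates produced there depend only on the regularity of the local change of coordinates used to flatten the level set (via the implicit function theorem), and this change of coordinates is controlled by the $\mathscr C^{1,\alpha}$ norm of $u_{\mathds 1_E}$ and the lower bound $\delta_0$ on $|\nabla u_{\mathds 1_E}|$---both uniform by Lemma \ref{Le:V0Lagrange}. So the uniform $\mathscr C^2$ (indeed analytic) bound comes directly out of the analyticity machinery, not from a separate bootstrap.
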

\begin{proof}[Proof of Lemma \ref{Le:CriticalPointRegular}] This proof is an adaptation of \cite[Proof of Theorem 8]{Chanillo_2000}. Let us first recall the following simpler version of \cite[Lemma 3]{Chanillo_2000}:
\begin{lemma}[Lemma 3, \cite{Chanillo_2000}]\label{Le:Chanillo}
Let $h:\R\to \R$ be a locally bounded function and $w\in \mathscr C^1(\O)$ be a solution of 
\[\Delta w=h(w)\text{ in }\O.\]Let $x_0\in \O$ be such that $\n w(x_0)\neq 0$. There exists a ball $\mathbb B(x_0;r)$ ($r>0$) such that the set $\{x\in \B:\, w(x)=w(x_0)\}$ is an analytic hypersurface of $\R^d$.
\end{lemma}
The original version of \cite[Lemma 3]{Chanillo_2000} also features a dependency of $h$ on $\n w$; this is not necessary here. Now consider a critical set $E\subset \O$. Let $u_{\mathds 1_E}$ be the solution of \eqref{Eq:Main} with $f=\mathds 1_E$ and let $\mu_E$ be such that 
\begin{equation}\label{Eq:Ksv}
E=\{u_{\mathds 1_E}>\mu_E\}.
\end{equation} The fact that \eqref{Eq:Ksv} holds follows from Lemma \ref{Le:V0Lagrange}.  Defining the (locally bounded) function $h:\eta\mapsto \mathds 1_{(\mu_E;+\infty)}(\eta)$, we thus have 
\[-\Delta u_{\mathds 1_E}=h(u_{\mathds 1_E}).\]
From Lemma \ref{Le:V0Lagrange}, for any $x_0\in \partial E=\{x:\, u_{\mathds 1_E}(x)=\mu_E\}$, we have 
\[ \n u_{\mathds 1_E}(x_0)\neq 0.\] Applying Lemma \ref{Le:Chanillo} we deduce that $\partial E$ is locally analytic.  In other words, we may write 
\[ \partial E=\underset{x\in \partial E}\bigcup (\B(x;r_x)\cap \partial E)\] with $r_x>0$ for any $x\in \partial E$ and $\B(x;r_x)\cap \partial E$ is an analytic hypersurface of $\R^n$. As $\partial E$ is compact by continuity of $u_{\mathds 1_E}$, we may extract a finite covering of $\partial E$ and hence conclude that $\partial E$ is a compact analytic hypersurface. To obtain the uniform $\mathscr C^2$ bounds it suffices to conclude as in \cite[Lemma 3, Theorem 8]{Chanillo_2000}, by using \eqref{Eq:EstPerim}\footnote{In the proof of \cite[Lemma 3]{Chanillo_2000} the analytic bounds obtained only depend on the regularity of the  local change of coordinates, which here is uniform in $E$ from Lemma \ref{Le:V0Lagrange} and the implicit function theorem.}.
\end{proof}

\paragraph{Preliminary considerations about shape derivatives at critical shapes.} We first identify the functional $\mathscr E$ with a shape functional $\mathscr F$, by defining
\[ \mathscr F: \O\supset E\mapsto \mathscr E(\mathds 1_E).\]
For any compact $\mathscr C^3$ subset $E$ of $\O$ and for any compactly supported vector field $\Phi\in W^{2,\infty}(\O;\R^d)$,  we can define, for any $t\in (-1;1)$ small enough, the function 
\[ e_{E,\Phi}(t):=\mathscr F\Big((\mathrm{Id}+t\Phi)E\Big).\] Provided they exists, the first (resp. second) order derivative of $\mathscr F$ at $E$ in the direction $\Phi$ is defined as 
\[ \mathscr F'(E)[\Phi]=e_{E,\Phi}'(0)\,, \text{resp. }\mathscr F''(E)[\Phi,\Phi]=e_{E,\Phi}''(0).
\]
We first check that $\mathscr E$ is indeed twice differentiable in the sense of shapes.
\begin{lemma}\label{Le:Differentiability}
For any $\mathscr C^3$ subset $E$ of $\O$, $\mathscr F$ is twice differentiable at $E$ in the direction of any compactly supported vector-field $\Phi\in W^{2,\infty}(\O;\R^d)$.
\end{lemma}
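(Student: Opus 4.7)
The plan is to pull the problem back to the fixed reference domain $E$ via the change of variable $\Psi_t := \mathrm{Id} + t\Phi$, so that the only $t$-dependence appears in smooth coefficients on a fixed geometry. Since $\Phi$ is compactly supported in $\O$, for $|t|$ small enough $\Psi_t$ is a $W^{2,\infty}$-diffeomorphism of $\O$ onto itself, and $E_t := \Psi_t(E)$ is a $\mathscr C^3$ subset of $\O$. A convenient preliminary step is to rewrite the energy: integrating by parts against $u_{\mathds 1_E}$ in the PDE $-\Delta u_{\mathds 1_E} = \mathds 1_E$ gives $\int_\O |\n u_{\mathds 1_E}|^2 = \int_E u_{\mathds 1_E}$, hence
\[ \mathscr F(E) = \mathscr E(\mathds 1_E) = -\frac{1}{2}\int_E u_{\mathds 1_E}. \]

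Letting $u_t := u_{\mathds 1_{E_t}}$ and $v_t := u_t \circ \Psi_t \in W^{1,2}_0(\O)$, the standard transformation formulas show that $v_t$ is the unique weak solution of
\[ -\n \cdot (A_t \n v_t) = J_t\, \mathds 1_E \quad \text{ in }\O, \qquad v_t\in W^{1,2}_0(\O), \]
with $J_t := \det(D\Psi_t)$ and $A_t := J_t(D\Psi_t)^{-1}(D\Psi_t)^{-T}$. Because $D\Psi_t = I + t\,D\Phi$ with $D\Phi \in W^{1,\infty}$, the map $t \mapsto (A_t, J_t)$ is real-analytic into $L^\infty(\O; \R^{d\times d}) \times L^\infty(\O)$ near $0$, and $A_t$ remains uniformly elliptic for $|t|$ small. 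Performing the corresponding change of variables in the expression for $\mathscr F(E_t)$ yields
\[ e_{E,\Phi}(t) = -\frac{1}{2}\int_{E} v_t\, J_t, \]
so that the $t$-dependence of $e_{E,\Phi}$ is entirely encoded by $v_t$ and $J_t$ on the fixed domain $E$.

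It then remains to establish that $t \mapsto v_t$ is $\mathscr C^2$ from a neighbourhood of $0$ in $\R$ into $W^{1,2}_0(\O)$. Denote by $L_t : W^{1,2}_0(\O) \to W^{-1,2}(\O)$ the continuous operator $v \mapsto -\n\cdot(A_t \n v)$; the smoothness of $t \mapsto A_t$ in $L^\infty$ implies that $t \mapsto L_t$ is analytic with values in $\mathcal L(W^{1,2}_0, W^{-1,2})$. Since $L_0 = -\Delta$ is an isomorphism, a Neumann-series argument (or the analytic implicit function theorem) gives that $L_t$ remains invertible for $|t|$ small and that $t\mapsto L_t^{-1}$ depends analytically on $t$; as the source $t \mapsto J_t\mathds 1_E$ is likewise smooth into $L^2(\O)$, we obtain that $v_t = L_t^{-1}(J_t \mathds 1_E)$ is of class $\mathscr C^\infty$ with values in $W^{1,2}_0(\O)$. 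Combined with the smoothness of $J_t$ in $L^\infty(\O)$, the product $v_t J_t$ is $\mathscr C^2$ into $L^2(\O)$, and integrating against $\mathds 1_E \in L^2(\O)$ yields the desired twice differentiability of $e_{E,\Phi}$. The main technical point is simply checking that the transformation $u_t \leftrightarrow v_t$ is licit and produces the stated PDE with the claimed $t$-regularity of coefficients; this is where the hypothesis $\Phi \in W^{2,\infty}$ with compact support is used, both to make $A_t \in L^\infty(\O;\R^{d\times d})$ with smooth $t$-dependence and to ensure $\Psi_t$ is a bi-Lipschitz diffeomorphism of $\O$ for small $t$.
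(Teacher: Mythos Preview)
Your argument is correct and follows the standard transport (change-of-variables) method for shape differentiability: pull back to the fixed domain via $\Psi_t=\mathrm{Id}+t\Phi$, obtain an elliptic problem on $\O$ whose coefficients $A_t,J_t$ depend analytically on $t$ in $L^\infty$, and apply a Neumann-series/implicit-function argument to $L_t$. The paper does not give a proof but simply states that it is a direct adaptation of \cite{MignotMuratPuel}; that reference uses precisely this transport approach, so your write-up is essentially the omitted argument. One cosmetic remark: with $\Phi\in W^{2,\infty}$ the deformed set $E_t$ need not be $\mathscr C^3$, but you never actually use that regularity---all computations take place on the fixed $\O$ and $E$---so the slip is harmless.
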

As this lemma is proved by a direct adaptation of \cite{MignotMuratPuel} we omit it here and focus in a subsequent paragraph on the computation of first and second order shape derivatives.

\paragraph{First order shape derivative and definition of the Lagrange multiplier.}
In terms of optimality conditions, since we are working with volume constraints, we  have to work with the  shape derivative of the volume functional. We recall \cite[Section 5.9.3, first example]{HenrotPierre} that the map
\[ \O\supset E\mapsto \mathrm{Vol}(E)\] is twice shape differentiable (\emph{i.e.} differentiable in the direction of any compactly supported vector field $\Phi \in W^{2,\infty}(\O)$) at any $\mathscr C^3$ domain $E$ and that, for any compactly supported vector field $\Phi$, there holds 
\begin{equation}\label{Lam} \mathrm{Vol}'(E)[\Phi]=\int_{\partial E}\langle \Phi,\nu_E\rangle\,, \mathrm{Vol}''(E)[\Phi,\Phi]=\int_{\partial E}\mathscr H_E \langle \Phi,\nu\rangle^2\end{equation} where $\nu_E$ is the normal vector on $\partial E$ and $\mathscr H_E$ is the mean curvature of $\partial E$.

For semantical convenience, we introduce the following definition:
\begin{definition}[Critical shape]\label{De:CriticalShape}
A $\mathscr C^2$ shape $E$ is \textbf{ a critical shape } for $\mathscr F$ if, for any compactly supported $\Phi\in W^{2,\infty}(\O)$, 
\[ \int_{\partial E} \langle \Phi,\nu_E\rangle=0\Rightarrow \mathscr F'(E)[\Phi]=0.\]
\end{definition}
In order to exploit second order optimality conditions it is more convenient to use a Lagrange multiplier. If a shape $E$ is critical, then the condition given in Definition \ref{De:CriticalShape} rewrites as: there exists a constant $\mu_E$ such that, for any compactly supported vector field $\Phi\in W^{2,\infty}(\O;\R^d)$, 
\begin{equation}\label{Eq:VolDeriv}\left(\mathscr F+\mu_E\mathrm{Vol}\right)'(E)[\Phi]=0.\end{equation} Computing this Lagrange multiplier is an important step in our subsequent analysis; to do it we need an expression of the first order derivative.
\begin{lemma}\label{Le:FirstSD}
For any $\mathscr C^3$ shape, for any compactly supported vector field $\Phi \in W^{2,\infty}(\O;\R^d)$, 
\[ \mathscr F'(E)[\Phi]=-\int_{\partial E} u_{\mathds 1_E}\langle \Phi,\nu_E\rangle.\] Furthermore, the shape derivative of the map $E\mapsto u_{\mathds 1_E}$ at $E$ in the direction $\Phi$ is the unique solution of 
\begin{equation}\label{Eq:FSDu}
\begin{cases}
-\Delta u_\Phi'=0&\text{ in }\O\,, 
\\ \left\llbracket \partial_\nu u_\Phi' \right\rrbracket=-\langle \Phi,\nu_E\rangle&\text{ on }\partial E
\end{cases}
\end{equation} where $\llbracket\cdot\rrbracket$ denotes the jump of a function across a hypersurface.

\end{lemma}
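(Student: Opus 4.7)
}
The plan is to first derive the weak PDE satisfied by the shape derivative $u_\Phi'$ of $t\mapsto u_{\mathds 1_{E_t}}$ at $t=0$, where $E_t=(\mathrm{Id}+t\Phi)E$, and then to compute $\mathscr F'(E)[\Phi]$ by expressing $\mathscr F$ through the state equation and using a careful integration by parts that accounts for the normal-derivative jump across $\partial E$.

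\textbf{Step 1: The equation for $u_\Phi'$.}
Let $u_t:=u_{\mathds 1_{E_t}}\in W^{1,2}_0(\O)$, which satisfies the weak formulation
\[\forall \varphi\in W^{1,2}_0(\O),\qquad \int_\O \n u_t\cdot \n \varphi=\int_{E_t}\varphi.\]
Standard continuity of the state with respect to $f$ in $L^p$ (for any $p<\infty$), together with the change of variables $y=(\mathrm{Id}+t\Phi)(x)$, shows that $t\mapsto u_t$ is differentiable in $W^{1,2}_0(\O)$ at $t=0$; call the derivative $u_\Phi'$. Differentiating the above identity at $t=0$ (using the Reynolds formula $\frac{d}{dt}\int_{E_t}\varphi|_{t=0}=\int_{\partial E}\varphi\langle\Phi,\nu_E\rangle$ for the right-hand side, which is valid since $\partial E$ is $\mathscr C^2$ by Lemma \ref{Le:CriticalPointRegular}) yields
\[\forall \varphi\in W^{1,2}_0(\O),\qquad \int_\O \n u_\Phi'\cdot\n\varphi=\int_{\partial E}\varphi\langle\Phi,\nu_E\rangle.\]
By standard elliptic theory, this uniquely determines $u_\Phi'\in W^{1,2}_0(\O)$. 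Interpreted strongly, it says $-\Delta u_\Phi'=0$ in $E$ and in $\O\setminus\overline E$, and (testing with $\varphi$ supported near $\partial E$ and using Gauss--Green on each side) that the jump satisfies $\llbracket \partial_\nu u_\Phi'\rrbracket=-\langle\Phi,\nu_E\rangle$ on $\partial E$, with the convention that $\llbracket\cdot\rrbracket$ denotes the outer trace minus the inner trace in the direction of $\nu_E$. This gives \eqref{Eq:FSDu}.

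\textbf{Step 2: Computation of $\mathscr F'(E)[\Phi]$.}
Using the state equation, $\mathscr F(E_t)=\mathscr E(\mathds 1_{E_t})=-\tfrac12\int_{E_t}u_t$. By the Hadamard formula,
\[\mathscr F'(E)[\Phi]=-\tfrac12\int_E u_\Phi'-\tfrac12\int_{\partial E}u_{\mathds 1_E}\langle\Phi,\nu_E\rangle.\]
To relate the volume integral of $u_\Phi'$ over $E$ to a boundary integral, I write $\int_E u_\Phi'=\int_\O \mathds 1_E\, u_\Phi'=\int_\O (-\Delta u_{\mathds 1_E})\,u_\Phi'$ and integrate by parts: since $u_{\mathds 1_E}=0$ on $\partial\O$ and $u_\Phi'\in W^{1,2}_0(\O)$, we have $\int_\O \mathds 1_E u_\Phi'=\int_\O \n u_{\mathds 1_E}\cdot\n u_\Phi'$. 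Splitting this integral over $E$ and $\O\setminus E$ and integrating by parts on each side (both are licit since $u_\Phi'$ is harmonic on each open piece and $u_{\mathds 1_E}\in W^{2,p}$ for all $p<\infty$) produces boundary terms only on $\partial E$ and yields
\[\int_\O \n u_{\mathds 1_E}\cdot\n u_\Phi'=-\int_{\partial E}u_{\mathds 1_E}\llbracket\partial_\nu u_\Phi'\rrbracket=\int_{\partial E}u_{\mathds 1_E}\langle\Phi,\nu_E\rangle.\]
Substituting back, the two half-contributions combine into $\mathscr F'(E)[\Phi]=-\int_{\partial E}u_{\mathds 1_E}\langle\Phi,\nu_E\rangle$, as claimed.

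\textbf{Main obstacle.}
The only genuinely delicate point is the rigorous identification of the equation \eqref{Eq:FSDu}: one must justify that $t\mapsto u_t$ is differentiable with derivative in $W^{1,2}_0(\O)$ despite the fact that $\mathds 1_{E_t}-\mathds 1_E$ is only of size $O(t)$ in $L^1$ (not in $L^2$), and correctly interpret the surface-supported right-hand side arising in the weak formulation. This is done via the transport of the domain (diffeomorphism $\mathrm{Id}+t\Phi$), which turns the moving-domain problem into a fixed-domain problem with smoothly $t$-dependent coefficients, exactly as in the argument of Mignot--Murat--Puel invoked in Lemma \ref{Le:Differentiability}. Once this is settled, the integration-by-parts computation and the Hadamard formula are entirely routine thanks to the $\mathscr C^{1,\alpha}$ (indeed analytic) regularity of $\partial E$ granted by Lemma \ref{Le:CriticalPointRegular}.
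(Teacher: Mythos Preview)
Your proof is correct and follows essentially the same route as the paper: differentiate the weak formulation of the state equation to obtain the transmission problem for $u_\Phi'$, then compute $\mathscr F'(E)[\Phi]$ and reduce via the equations. The only cosmetic difference is that in Step~2 you start from the simplified expression $\mathscr F(E_t)=-\tfrac12\int_{E_t}u_t$ and then perform a split integration by parts, whereas the paper differentiates the full energy $\tfrac12\int_\O|\nabla u_t|^2-\int_{E_t}u_t$ and cancels two of the three resulting terms directly via the weak formulation of $u_{\mathds 1_E}$; your splitting argument is in fact nothing but the weak formulation of $u_\Phi'$ from Step~1 tested against $u_{\mathds 1_E}$, so the two computations are equivalent. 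One small remark: the $\mathscr C^3$ regularity of $\partial E$ is part of the hypothesis of the lemma, so invoking Lemma~\ref{Le:CriticalPointRegular} (which concerns critical shapes only) is unnecessary here.
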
 
\begin{proof}[Proof of Lemma \ref{Le:FirstSD}]
To prove this lemma we first need to compute the shape derivative of the map $u\mapsto u_{\mathds 1_E}$. These computations were already carried out (for the shape derivative of the first eigenvalue of the operator $-\Delta-\mathds 1_E$) in details in \cite{MRB2020}, but we sketch them in the present case for the sake of completeness. Fix $E$ and a compactly supported vector field $\Phi$. Define, for any $t$ such that $|t|$ small enough, $E_t:=\mathds 1_{(\mathrm{Id}+t\Phi)E}$ and $u_t$ as the solution of \eqref{Eq:Main} with $f=\mathds 1_{E_t}$. The weak formulation of the equation on $u_t$ reads:
\begin{equation}\forall v\in W^{1,2}_0(\O)\,, \int_\O \langle \n u_t,\n v\rangle=\int_{E_t}v.\end{equation}
Taking the derivative of this formulation at $t=0$, it appears that the shape derivative $u_\Phi'$ satisfies the equation
\begin{equation}\label{Eq:WeakFSD}
\forall v \in W^{1,2}_0(\O)\,, \int_\O \langle \n u_\Phi',\n v\rangle=\int_{\partial E} v\langle \Phi,\nu_E\rangle.
\end{equation}
Alternatively, this rewrites as: $u_\Phi'$  is the unique solution of the equation 
\[\begin{cases}
-\Delta u_\Phi'=0&\text{ in }\O\,, 
\\ \left\llbracket \partial_\nu u_\Phi' \right\rrbracket=-\langle \Phi,\nu_E\rangle&\text{ on }\partial E
\end{cases}
\] exactly as claimed in the statement of the theorem. Now, going back to the definition of $\mathscr F$ we obtain 
\[ \mathscr F'(E)[\Phi]=\int_\O \langle \n u_{\mathds 1_E},\n u_\Phi'\rangle-\int_E u_\Phi'-\int_{\partial E}u_{\mathds 1_E}\langle \Phi,\nu_E\rangle.\]Using the weak formulation of \eqref{Eq:Main} this gives
\[ \mathscr F'(E)[\Phi]=-\int_{\partial E}u_{\mathds 1_E}\langle \Phi,\nu_E\rangle.\]
\end{proof}
This expression allows us to prove the following result
\begin{lemma}\label{Le:EquivalenceCritical}
A shape $E\subset \O$ is a critical set in the sense of Definition \ref{De:CriticalPoint} if, and only if it is a critical shape in the sense of Definition \ref{De:CriticalShape}.
\end{lemma}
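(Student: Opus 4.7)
The plan is to reduce both directions of the equivalence to Lemma \ref{Le:FirstSD}, which expresses $\mathscr F'(E)[\Phi]$ as a pure boundary integral of $u_{\mathds 1_E}$ against the normal perturbation. With this reduction in hand, the heart of the matter becomes: having $u_{\mathds 1_E}$ constant on $\partial E$ (the critical shape side) is equivalent to $E$ being a super-level set of $u_{\mathds 1_E}$ (the critical set side), and the value of this constant must then be identified with the Lagrange level $\mu_{\mathds 1_E,V_0}$ of Lemma \ref{Le:V0Lagrange}.

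For the direct implication, if $E$ is a critical set in the sense of Definition \ref{De:CriticalPoint}, then $E=\{u_{\mathds 1_E}>\mu_E\}$ is open, so by continuity its topological boundary sits inside $\{u_{\mathds 1_E}=\mu_E\}$; hence $u_{\mathds 1_E}\equiv\mu_E$ on $\partial E$. Plugging this into Lemma \ref{Le:FirstSD} yields $\mathscr F'(E)[\Phi]=-\mu_E\int_{\partial E}\langle\Phi,\nu_E\rangle$, which vanishes whenever the volume derivative \eqref{Lam} does. Thus $E$ is a critical shape, with $\mu_E$ playing the role of the Lagrange multiplier of \eqref{Eq:VolDeriv}.

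The reverse implication is the substantial direction. Combining Definition \ref{De:CriticalShape} with Lemma \ref{Le:FirstSD} will give $\int_{\partial E}u_{\mathds 1_E}\phi=0$ for every smooth $\phi$ on $\partial E$ with zero mean: any such $\phi$ can be realised as $\langle\Phi,\nu_E\rangle$ via a normal extension and cutoff, which is legitimate since $\partial E$ is at least $\mathscr C^2$ by Lemma \ref{Le:CriticalPointRegular}. This $L^2(\partial E)$-orthogonality to the mean-zero subspace forces $u_{\mathds 1_E}$ to be constant, equal to some $\mu$, on $\partial E$. A maximum principle argument on each of the two components $E$ and $\O\setminus\overline E$ — on $E$, $u_{\mathds 1_E}-\mu$ solves $-\Delta v=1$ with $v=0$ on $\partial E$; on $\O\setminus\overline E$, $u_{\mathds 1_E}-\mu$ is harmonic with boundary values $-\mu$ on $\partial\O$ and $0$ on $\partial E$ — will then identify $E$ with $\{u_{\mathds 1_E}>\mu\}$. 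Finally, the uniqueness clause of Lemma \ref{Le:V0Lagrange} pins down $\mu=\mu_{\mathds 1_E,V_0}$, so that $E$ is recognised as a critical set.

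The main point requiring care is the sign of $\mu$: the maximum principle on $\O\setminus\overline E$ yields the strict inequality $u_{\mathds 1_E}<\mu$ only when $\mu>0$. Under the standing large-volume assumption $V_0\in(1-\e_0,1)$, however, Lemma \ref{Le:V0Lagrange} keeps $\partial E$ at positive distance from $\partial\O$, and since $u_{\mathds 1_E}>0$ on $\O$ by Hopf's lemma the value $\mu=u_{\mathds 1_E}_{|\partial E}$ is automatically positive. Everything else then reduces to a routine extension-of-vector-fields and PDE comparison exercise.
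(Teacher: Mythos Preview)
Your proof is correct and follows the same route as the paper's: both directions reduce to Lemma \ref{Le:FirstSD}, the forward one is immediate from $u_{\mathds 1_E}\equiv\mu_E$ on $\partial E$, and the reverse combines the fundamental lemma of the calculus of variations with the strong maximum principle on $E$ and on $\O\setminus\overline E$.

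Two small citation slips are worth fixing. In the reverse direction you justify the $\mathscr C^2$ regularity of $\partial E$ via Lemma \ref{Le:CriticalPointRegular}, but that lemma is about critical \emph{sets} (Definition \ref{De:CriticalPoint}), which is precisely what you are trying to establish; the regularity you need is already built into Definition \ref{De:CriticalShape}. Likewise, your appeal to Lemma \ref{Le:V0Lagrange} to keep $\partial E$ away from $\partial\O$ is premature: that lemma concerns the super-level set $\omega_{f,V_0}$, not the shape $E$, which at this stage of the argument has not yet been identified with a super-level set. The paper's argument is more direct: $u_{\mathds 1_E}>0$ in $\O$ by the maximum principle, so the constant value $\mu$ on the interior hypersurface $\partial E$ is positive, and hence $\partial E\cap\partial\O=\emptyset$.
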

\begin{proof}[Proof of Lemma \ref{Le:EquivalenceCritical}]
Let us first notice that from Lemma \ref{Le:CriticalPointRegular}, critical shapes in the sense of Definition \ref{De:CriticalPoint} are analytic and thus in particular $\mathscr C^2$. 

Consider now a critical point $E$ in the sense of Definition \ref{De:CriticalPoint}. By definition there exists a constant $\mu_E$ such that $u_{\mathds 1_E}=\mu_E$ on $\partial E$. As $E$ is $\mathscr C^2$ from Lemma \ref{Le:CriticalPointRegular} we can legitimately compute the first order shape derivative of $\mathscr F$ at $E$ in the direction of a fixed compactly supported vector field $\Phi$. By Lemma \ref{Le:FirstSD}
\[ \mathscr F'(E)[\Phi]=-\int_{\partial E}u_{\mathds 1_E}\langle \Phi,\nu_E\rangle=-\mu_E\int_{\partial E}\langle \Phi,\nu_E\rangle.\] Hence, if $\int_{\partial E}\langle \Phi,\nu_E\rangle=0$ we have $\mathscr F'(E)[\Phi]=0$ and thus $E$ is a critical shape in the sense of Definition \ref{De:CriticalShape}.

Conversely, consider a shape $E$ of class $\mathscr C^3$ that is critical in the sense of Definition \ref{De:CriticalShape}. As for any compactly supported vector field $\Phi$ such that $\int_{\partial E}\langle \Phi,\nu_E\rangle=0$ we must have 
\[ \int_{\partial E}u_{\mathds 1_E}\langle \Phi,\nu_E\rangle=0\] the fundamental lemma of the calculus of variations implies the existence of a constant $\mu_E>0$ such that 
\[u_{\mathds 1_E}=\mu_E\text{ on }\partial E.\] From the maximum principle applied to \eqref{Eq:Main} we already know that $u_{\mathds 1_E}>0$ in $\O$, hence $\mu_E>0$. Consequently $\partial E\cap \partial \O=\emptyset$ and $u_{\mathds 1_E}$ is a solution of 
\begin{equation}
\begin{cases}-\Delta u_{\mathds 1_E}=1&\text{ in }E\,, 
\\ u_E=\mu_E&\text{ on }\partial E\end{cases}\text{ and } \begin{cases}-\Delta u_{\mathds 1_E}=0&\text{ in }\O\backslash \overline E\,, \\u_{\mathds 1_E}=\mu_E&\text{ on }\partial E\,, \\ u_{\mathds 1_E}=0&\text{ on }\partial \O.\end{cases}\end{equation} As $\partial E\cap \partial \O=\emptyset$, both $E$ and $\O\backslash \overline E$ are smooth open sets. From the strong maximum principle we deduce that $u_{\mathds 1_E}>\mu_E$ in $E$ and that $u_{\mathds 1_E}<\mu_E$ in $\O\backslash \overline E$. Thus $E$ is a critical set in the sense of Definition \ref{De:CriticalPoint}.
\end{proof}
From this analysis we deduce the following:
\begin{lemma}\label{Le:LagrangeMultiplier}
Assume $E$ is a critical shape in the sense of Definition \ref{De:CriticalShape}. The associated Lagrange multiplier is $\mu_E$ where $E=\{u_{\mathds 1_E}>\mu_E\}$ in the sense that 
\[ \forall \Phi\in W^{2,\infty}(\O;\R^d)\,, \Phi \text{ compactly supported }\,, \left(\mathscr F+\mu_E\mathrm{Vol}\right)'(E)[\Phi]=0.\]
\end{lemma}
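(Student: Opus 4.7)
The plan is to simply chain together Lemma \ref{Le:EquivalenceCritical}, Lemma \ref{Le:FirstSD}, and the volume shape-derivative formula \eqref{Lam}. Since the shape derivative of $\mathscr F$ at $E$ has already been identified as a boundary integral weighted by $u_{\mathds 1_E}\vert_{\partial E}$, the task reduces to observing that on $\partial E$ the state $u_{\mathds 1_E}$ is identically equal to the scalar $\mu_E$; then the first-order derivatives of $\mathscr F$ and of $\mathrm{Vol}$ become proportional.

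More concretely, I would first invoke Lemma \ref{Le:EquivalenceCritical}: since $E$ is a critical shape in the sense of Definition \ref{De:CriticalShape}, it is also a critical set in the sense of Definition \ref{De:CriticalPoint}. In particular there exists $\mu_E\in\R$ with $E=\{u_{\mathds 1_E}>\mu_E\}$. By the regularity provided by Lemma \ref{Le:CriticalPointRegular} (so that $\partial E$ is smooth and $u_{\mathds 1_E}$ is continuous up to $\partial E$), this forces the trace identity $u_{\mathds 1_E}\equiv \mu_E$ on $\partial E$. Next, for any compactly supported vector field $\Phi\in W^{2,\infty}(\O;\R^d)$, Lemma \ref{Le:FirstSD} yields
\[
\mathscr F'(E)[\Phi]=-\int_{\partial E} u_{\mathds 1_E}\langle\Phi,\nu_E\rangle=-\mu_E\int_{\partial E}\langle\Phi,\nu_E\rangle.
\]
Combining this with the first identity of \eqref{Lam}, $\mathrm{Vol}'(E)[\Phi]=\int_{\partial E}\langle\Phi,\nu_E\rangle$, one immediately obtains
\[
\bigl(\mathscr F+\mu_E\mathrm{Vol}\bigr)'(E)[\Phi]=-\mu_E\int_{\partial E}\langle\Phi,\nu_E\rangle+\mu_E\int_{\partial E}\langle\Phi,\nu_E\rangle=0,
\]
which is the claimed identity.

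There is no real obstacle here; the only point worth being careful about is the trace identity $u_{\mathds 1_E}=\mu_E$ on $\partial E$. This requires that $\partial E$ be regular enough for the level-set representation $E=\{u_{\mathds 1_E}>\mu_E\}$ to translate into pointwise equality on the boundary, which is precisely what Lemma \ref{Le:CriticalPointRegular} (analyticity of critical boundaries) and the standard $\mathscr C^{1,\alpha}$ interior regularity of $u_{\mathds 1_E}$ guarantee.
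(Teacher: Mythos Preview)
Your argument is correct and is precisely the reasoning the paper has in mind: the lemma is stated immediately after Lemma~\ref{Le:EquivalenceCritical} with the phrase ``From this analysis we deduce the following'', and no separate proof is given. The paper's implicit derivation is exactly the chain you wrote out---use Lemma~\ref{Le:EquivalenceCritical} to get $u_{\mathds 1_E}\equiv\mu_E$ on $\partial E$, plug into Lemma~\ref{Le:FirstSD}, and combine with \eqref{Lam}.
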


To alleviate notations we introduce a notation for the shape Lagrangian:
\begin{definition}[Shape Lagrangian]\label{De:ShapeLagrangian}Let $E$ be a critical shape for $\mathscr F$. The \textbf{associated shape Lagrangian} is 
\[ \mathscr L_E:=\mathscr F+\mu_E \mathrm{Vol}\] where $E=\{u_{\mathds 1_E}>\mu_E\}$.\end{definition}

With the above ingredients at hand, we now move to second-order shape derivatives.

\paragraph{First computations and elements for shape hessians.} In the subsequent paragraphs we use indifferently the expressions "shape hessians" and "second order shape derivatives". We begin with an expression of the shape hessian at a critical shape $E$ in the direction $\Phi$.
\begin{lemma}\label{Le:SecondSD}
Let $E$ be a critical shape in the sense of Definition \ref{De:CriticalShape} and let $\Phi\in W^{2,\infty}(\O;\R^d)$ be a compactly supported vector field. The second-order shape derivative of the shape Lagrangian $\mathscr L_E$ (defined in Definition \ref{De:ShapeLagrangian}) is given by the expression
\[\mathscr L_E''(E)[\Phi,\Phi]= -\int_{\partial E}u_\Phi'\langle \Phi,\nu_E\rangle-\int_{\partial E}\frac{\partial u_E}{\partial \nu}\langle \Phi,\nu_E\rangle^2
\]
where $u_\Phi'$ solves \eqref{Eq:FSDu}.
\end{lemma}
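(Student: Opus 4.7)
The plan is to compute $\tfrac{d^2}{dt^2}\big|_{t=0}\mathscr L_E(E_t)$ for $E_t := (\mathrm{Id}+t\Phi)E$ directly, systematically exploiting the critical shape identity $u_{\mathds 1_E} \equiv \mu_E$ on $\partial E$ proved in Lemma \ref{Le:EquivalenceCritical}. Writing $u_t := u_{\mathds 1_{E_t}}$, testing the weak form of $-\Delta u_t = \mathds{1}_{E_t}$ against $u_t$ gives $\int_\O|\nabla u_t|^2 = \int_{E_t}u_t$, hence $\mathscr F(E_t) = -\tfrac12\int_{E_t}u_t$. The useful algebraic trick is the decomposition
\[
\mathscr L_E(E_t) \;=\; -\tfrac{1}{2}\int_{E_t}(u_t-\mu_E)\,dx \;+\; \tfrac{\mu_E}{2}\mathrm{Vol}(E_t),
\]
whose integrand $u_{\mathds 1_E}-\mu_E$ vanishes on $\partial E$ at $t=0$, which causes many boundary contributions to drop.

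Expanding $\int_{E_t}(u_t-\mu_E)$ to second order in $t$ via Reynolds' transport formula, I split $u_t-\mu_E = (u_t-u_{\mathds 1_E}) + (u_{\mathds 1_E}-\mu_E)$. The ``fixed'' piece $\int_{E_t}(u_{\mathds 1_E}-\mu_E)$ contributes only $\tfrac{t^2}{2}\int_{\partial E}\partial_\nu u_{\mathds 1_E}\langle\Phi,\nu_E\rangle^2$ at order $t^2$ -- its first order term vanishes thanks to $u_{\mathds 1_E}=\mu_E$ on $\partial E$. The ``moving'' piece $\int_{E_t}(u_t-u_{\mathds 1_E})$ Taylor-expands, both in the function and in the domain, to $t\int_E u_\Phi' + t^2\int_{\partial E}u_\Phi'\langle\Phi,\nu_E\rangle + \tfrac{t^2}{2}\int_E u_\Phi'' + o(t^2)$, where $u_\Phi''$ is the second Eulerian shape derivative of $u_{\mathds 1_E}$ in direction $\Phi$. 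Collecting terms,
\[
\tfrac{d^2}{dt^2}\int_{E_t}(u_t-\mu_E)\,dx\Big|_{t=0} \;=\; \int_E u_\Phi'' \;+\; 2\int_{\partial E}u_\Phi'\langle\Phi,\nu_E\rangle \;+\; \int_{\partial E}\partial_\nu u_{\mathds 1_E}\langle\Phi,\nu_E\rangle^2.
\]

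The remaining interior term $\int_E u_\Phi''$ is then eliminated by a duality argument. Differentiating the weak equation $\int_\O\nabla u_t\cdot\nabla v = \int_{E_t}v$ twice against a $t$-independent $v\in W^{1,2}_0(\O)$ yields the weak formulation of $u_\Phi''$; picking $v = u_{\mathds 1_E}$ and symmetrizing via the equation for $u_{\mathds 1_E}$ produces $\int_E u_\Phi'' = \tfrac{d^2}{dt^2}\int_{E_t}u_{\mathds 1_E}|_{t=0}$. Applying the transport formula once more to this right-hand side, and using $u_{\mathds 1_E} = \mu_E$ on $\partial E$, the $\mu_E$-contributions reassemble into $\mu_E\,\mathrm{Vol}''(E)[\Phi,\Phi]$ plus an extra $\int_{\partial E}\partial_\nu u_{\mathds 1_E}\langle\Phi,\nu_E\rangle^2$. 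Substituted back, the Lagrange multiplier terms cancel exactly against $\tfrac{\mu_E}{2}\mathrm{Vol}''(E)[\Phi,\Phi]$, and the stated formula drops out.

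The main technical delicacy is justifying the smoothness in $t$ and the existence of $u_\Phi''$ with the regularity needed for these manipulations, given the moving discontinuity of $\mathds{1}_{E_t}$. This is resolved by transporting the problem to the fixed domain $\O$ via $x = (\mathrm{Id}+t\Phi)(y)$: the transformed PDE has coefficients depending smoothly on $t$, and the implicit function theorem argument à la \cite{MignotMuratPuel}, already invoked in Lemma \ref{Le:Differentiability}, yields the required differentiability.
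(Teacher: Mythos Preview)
Your argument is correct but follows a genuinely different route from the paper's.

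The paper proceeds much more directly: it starts from the already-computed first derivative $\mathscr F'(E)[\Phi]=-\int_{\partial E}u_{\mathds 1_E}\langle\Phi,\nu_E\rangle$ (Lemma \ref{Le:FirstSD}) and applies Hadamard's formula for differentiating a \emph{surface} integral,
\[
\left.\frac{d}{dt}\right|_{t=0}\int_{\Gamma(t)}f(t)=\int_{\Gamma(0)} f'(0)+\int_{\Gamma(0)}\left(\mathscr H_{\Gamma(0)} f+\frac{\partial f}{\partial \nu}\right)\langle \Phi,\nu\rangle,
\]
restricting to normal vector fields (legitimate at a critical shape by the structure theorem). This yields $\mathscr F''(E)[\Phi,\Phi]=-\int_{\partial E} u_\Phi'\langle \Phi,\nu\rangle-\int_{\partial E}\big( \mathscr H_{E} u_{\mathds 1_E}+\partial_\nu u_{\mathds 1_E}\big)\langle \Phi,\nu_E\rangle^2$, and the curvature term cancels against $\mu_E\mathrm{Vol}''(E)[\Phi,\Phi]=\mu_E\int_{\partial E}\mathscr H_E\langle\Phi,\nu\rangle^2$ via $u_{\mathds 1_E}=\mu_E$ on $\partial E$. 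No second state derivative $u_\Phi''$ ever appears.

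Your approach instead stays with volume integrals: you exploit the energy identity $\mathscr F(E_t)=-\tfrac12\int_{E_t}u_t$, subtract off $\mu_E$ so the integrand vanishes on $\partial E$, expand via Reynolds' transport, and then remove the extra interior term $\int_E u_\Phi''$ by the duality identity $\int_E u_\Phi''=\tfrac{d^2}{dt^2}|_{t=0}\int_{E_t}u_{\mathds 1_E}$. This is longer and requires introducing and controlling $u_\Phi''$, but it has two advantages: it avoids quoting the surface Hadamard formula, and because the integrand $u_{\mathds 1_E}-\mu_E$ vanishes on $\partial E$, the tangential and curvature contributions drop out automatically rather than through an explicit cancellation, so the restriction to normal $\Phi$ is not needed in the computation itself.
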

\begin{proof}[Proof of Lemma \ref{Le:SecondSD}]First of all, since $E$ is a critical shape, it suffices to work with vector fields that are normal to $\partial E$ (see \cite[Remark on page 246]{HenrotPierre}).
We recall that the formula of Hadamard gives
\[
\left.\frac{d}{dt}\right|_{t=0}\int_{\Gamma(t)}f(t)=\int_{\Gamma(0)} f'(0)+\int_{\Gamma(0)}\left(\mathscr H_{\Gamma(0)} f+\frac{\partial f}{\partial \nu}\right)\langle \Phi,\nu\rangle\] where 
$\mathscr H_{\cdot}$ denotes the mean curvature of a hypersurface.

Let us start from the fact that, at a given shape $E$ we have (Lemma \ref{Le:FirstSD})
\[\mathscr F'(E)[\Phi]=-\int_{\partial E} u_{\mathds 1_E} \langle \Phi,\nu_E\rangle.\] Applying the formula of Hadamard we obtain
\[
\mathscr F''(E)[\Phi,\Phi]=-\int_{\partial E} u_\Phi'\langle \Phi,\nu\rangle-\int_{\partial E}\left( \mathscr H_{ E} u_{\mathds 1E}+\frac{\partial u_{\mathds 1_E}}{\partial \nu}\right)\langle \Phi,\nu_E\rangle^2.
\]
Taking into account the definition of the Lagrange multiplier (Lemma \ref{Le:LagrangeMultiplier}) and \eqref{Lam}
 we finally derive the following expression for the second-order shape derivative of the Lagrangian (see Definition \ref{De:ShapeLagrangian})
\begin{equation}\label{Eq:EllDirichletLagrangianDS}
\mathscr L_{E}''(E)[\Phi,\Phi]=-\int_{\partial E} u_\Phi'\langle \Phi,\nu\rangle-\int_{\partial E} \frac{\partial u_{\mathds 1_E}}{\partial \nu}\langle \Phi,\nu_E\rangle^2.\end{equation}
\end{proof}

\paragraph{The question of coercivity of shape Lagrangians.} The goal of upcoming sections is to provide coercivity results on shape Lagrangians at critical shapes. By coercivity we mean not only that the shape hessian is positive, \emph{i.e.} that for any compactly supported vector field $\Phi\in W^{2,\infty}(\O;\R^d)$ we have 
\[ \mathscr L_E''(E)[\Phi,\Phi]>0\text{ whenever }\langle \Phi,\nu_E\rangle\neq 0\text{ on }\partial E\] but  that we can even quantify this positivity, in the sense that there exists a constant $c_E>0$ (independent of $\Phi$) such that 
\[ \mathscr L_E''(E)[\Phi,\Phi]\geq c_E \Vert \langle \Phi,\nu_E\rangle \Vert_{L^2(\partial E)}^2.\]
This leads to introducing the following definition:
\begin{definition}[$L^2$-stability]\label{De:StableShape}
A critical shape $E$ is said to be $L^2$-\textbf{stable} if there exists a constant $c_E>0$ such that
 for any compactly supported vector field $\Phi\in W^{2,\infty}(\O;\R^d)$, 
\[ \mathscr L_E''(E)[\Phi,\Phi]\geq  c_E\Vert \langle \Phi,\nu_E\rangle\Vert_{L^2(\partial E)}^2.\]
\end{definition}

It should be noted that we can not expect a better coercivity norm than $L^2(\partial E)$; when diagonalising the shape hessian we actually will prove the optimality of this norm. While weaker than the usual $H^{\frac12}(\partial E)$ norm \cite{DambrineLamboley,DambrinePierre,henrotpierrerihani} this is still enough to obtain local quantitative inequalities \cite{MRB2020}. 

Our goal is henceforth to prove the following coercivity result:
\begin{proposition}\label{Pr:CriticalCoercivity}There exists $\e_1>0\,, \e_1\leq \e_0\,, \underline c>0$ such that, for any $V_0\in (1-\e_1;1)$, for any critical set $E$ (in the sense of Definition \ref{De:CriticalShape}), for any compactly supported vector field $\Phi\in W^{2,\infty}(\O;\R^d)$, 
\[ \mathscr L_E''(E)[\Phi,\Phi]\geq \underline c\Vert \langle \Phi,\nu\rangle\Vert_{L^2(\partial E)}^2.\]
\end{proposition}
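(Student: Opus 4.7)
The plan is to convert the shape Hessian from Lemma~\ref{Le:SecondSD} into a manifestly signed expression by an integration by parts, and then to dominate the resulting negative contribution by the positive one, exploiting the thinness of $\Omega\setminus\overline E$ granted by Lemma~\ref{Le:V0Lagrange}. Since $u_\Phi'$ itself is continuous across $\partial E$ (only its normal derivative jumps) and vanishes on $\partial \Omega$, it lies in $W^{1,2}_0(\Omega)$ and may be used as a test function in \eqref{Eq:WeakFSD}; this yields $\int_\Omega |\nabla u_\Phi'|^2=\int_{\partial E} u_\Phi'\langle \Phi,\nu_E\rangle$. Substituting this identity into Lemma~\ref{Le:SecondSD} recasts the Hessian as
\[
\mathscr L_E''(E)[\Phi,\Phi]=-\int_\Omega |\nabla u_\Phi'|^2+\int_{\partial E}\left|\frac{\partial u_{\mathds 1_E}}{\partial \nu}\right|\langle \Phi,\nu_E\rangle^2,
\]
where the positivity of $-\partial_\nu u_{\mathds 1_E}$ on $\partial E$ follows from the critical-point characterisation $E=\{u_{\mathds 1_E}>\mu_E\}$ (Lemma~\ref{Le:EquivalenceCritical}) and the Hopf lemma.

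I would then control each term separately and uniformly in $E$. The positive term is immediately coercive: Lemma~\ref{Le:V0Lagrange} gives $|\partial_\nu u_{\mathds 1_E}|\geq \delta_0>0$ on $\partial E$ uniformly in $E$, so
\[
\int_{\partial E}|\partial_\nu u_{\mathds 1_E}|\langle \Phi,\nu_E\rangle^2\geq \delta_0\Vert\langle\Phi,\nu_E\rangle\Vert_{L^2(\partial E)}^2.
\]
For the negative term I would exploit the thin shell between $\partial E$ and $\partial \Omega$: Lemma~\ref{Le:V0Lagrange} ensures that its width $\eta(V_0)$ tends to $0$ as $V_0\to 1$, uniformly in $E$, and $u_\Phi'=0$ on $\partial\Omega$. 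Writing $u_\Phi'(x)=\int_0^{d(x)}\partial_r u_\Phi'(x(r))\,dr$ along the (approximately normal) fibres of the shell and applying Cauchy--Schwarz gives the trace--Poincar\'e estimate $\Vert u_\Phi'\Vert_{L^2(\partial E)}^2\leq C\eta(V_0)\int_\Omega |\nabla u_\Phi'|^2$. Injecting this back into $\int_\Omega |\nabla u_\Phi'|^2=\int_{\partial E} u_\Phi'\langle \Phi,\nu_E\rangle$ via one more Cauchy--Schwarz yields
\[
\int_\Omega |\nabla u_\Phi'|^2\leq C\eta(V_0)\Vert\langle \Phi,\nu_E\rangle\Vert_{L^2(\partial E)}^2.
\]

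Assembling the two bounds produces $\mathscr L_E''(E)[\Phi,\Phi]\geq (\delta_0-C\eta(V_0))\Vert\langle \Phi,\nu_E\rangle\Vert_{L^2(\partial E)}^2$, and choosing $\e_1\leq \e_0$ so that $C\eta(V_0)\leq \delta_0/2$ for $V_0\in(1-\e_1;1)$ delivers the coercivity constant $\underline c=\delta_0/2$. The main obstacle is ensuring uniformity in $E$ of the thin-shell trace--Poincar\'e inequality: it requires a uniform bi-Lipschitz parametrisation of $\Omega\setminus\overline E$ as a shell over $\partial \Omega$, which is exactly what the uniform $\mathscr C^{1,\alpha}$-graph convergence in Lemma~\ref{Le:V0Lagrange}, together with the uniform perimeter bound \eqref{Eq:EstPerim}, supply. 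The ``interior Steklov'' diagonalisation announced in the introduction is a spectral reformulation of the same mechanism: expanding $\langle \Phi,\nu_E\rangle$ in eigenfunctions of the self-adjoint transmission operator $\varphi\mapsto u_\Phi'|_{\partial E}$ diagonalises the Hessian, and coercivity reduces to a uniform upper bound on the eigenvalues of this operator, which is precisely what the thin-shell estimate delivers as $V_0\to 1$.
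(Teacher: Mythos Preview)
Your argument is correct and in fact more direct than the one given in the paper. The paper introduces a weighted interior Steklov problem with weight $\rho_E=-1/\partial_\nu u_{\mathds 1_E}$, diagonalises the Hessian in the corresponding eigenbasis to obtain the lower bound $\mathscr L_E''(E)[\Phi,\Phi]\geq (1-1/\lambda_{0,\rho_E})\Vert\langle\Phi,\nu_E\rangle\Vert_{L^2(\partial E)}^2$, and then proves $\lambda_{0,\rho_E}\to\infty$ (Lemma~\ref{Le:EigenvalueBlowup}) by a contradiction argument: assuming boundedness of the first eigenvalue, a duality/bootstrap on the transmission problem \eqref{Eq:PhiEpsilon} yields uniform $\mathscr C^{0,\beta}$ bounds on the normalised eigenfunction, which combined with $\partial E_\e\to\partial\Omega$ forces the contradiction $1=\int_{\partial E_\e}\p_\e^2\to 0$.

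Your route bypasses the spectral machinery entirely: the thin-shell trace--Poincar\'e inequality $\int_{\partial E}u^2\leq C\eta(V_0)\int_\Omega|\nabla u|^2$ for $u\in W^{1,2}_0(\Omega)$ is precisely the Rayleigh-quotient statement $\lambda_{0,\rho_E}\geq \delta_0/(C\eta(V_0))\to\infty$, but you prove it constructively by integration along normal fibres rather than by bootstrap and contradiction. This is shorter and gives an explicit rate. What the paper's diagonalisation buys is a sharper structural statement (the exact coercivity constant is $1-1/\lambda_{0,\rho_E}$, and the $L^2(\partial E)$ norm is identified as the \emph{optimal} coercivity norm), and the framework is set up to be reused for the eigenvalue and non-energetic problems in the appendices, where the Hessian has additional terms and a spectral decomposition may be harder to avoid. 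For the present Proposition your argument is entirely adequate; the only point where care is needed---uniform bi-Lipschitz parametrisation of the shell $\Omega\setminus\overline E$ as a normal graph over $\partial\Omega$---you correctly source from the $\mathscr C^{1,\alpha}$ graph convergence in Lemma~\ref{Le:V0Lagrange}.
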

Before we prove this proposition we need to introduce the diagonalisation basis.
We consider a fixed critical shape $E\subset \O$ and work with $V_0\in (1-\e_0;1)$, where $\e_0$ is given by Lemma \ref{Le:V0Lagrange}. Heuristically, given that $u_\Phi'$ satisfies \eqref{Eq:FSDu}, it is natural to solve the following eigenvalue problem:  find $(\lambda,\phi)\in \R\times W^{1,2}(\O)$ such that
 \begin{equation}\label{Eq:EllDirichletSteklov}
 \begin{cases}
 -\Delta \phi=0&\text{ in }E\cup \left(E\right)^c
 \\ \llbracket \partial_\nu \phi\rrbracket_{E}=-\lambda \rho \phi&\text{ on } \partial E\,, 
 \\ \int_{\partial E} \phi^2>0\,, 
 \\ \phi \in W^{1,2}_0(\O)
 \end{cases}
 \end{equation}
where $\rho$ is a weight to be determined. Let us simply consider the eigenvalue problem \eqref{Eq:EllDirichletSteklov} with a weight $\rho \in L^\infty(\partial E;[\delta,1/\delta])$ for some small $\delta>0$. To study this spectral problem we introduce the weighted space
$$L^2_{\rho}(\partial E):=\left\{f\in L^2(\partial E), \int_{\partial E}\rho f^2<\infty\right\}=L^2(\partial E).$$ The last equality comes from the fact that the weight $\rho$ is uniformly bounded. The only difference here is of course the scalar product; we work with 
\[ \langle\cdot,\cdot\rangle_\rho:(f,g)\mapsto \int_{\partial E}\rho f g.\]

Under these assumptions it is fairly standard to obtain the existence of a spectral basis associated with \eqref{Eq:EllDirichletSteklov}. Define the operator $T_\rho:L^2_\rho(\partial E)\to L^2(\partial E)$ as follows: for any $f\in L^2_\rho(\partial E)$  let $v_f\in W^{1,2}(\O)$ be the unique solution to 
\begin{equation}\label{Eq:Aux}
\begin{cases}
-\Delta v_f=0	&\text{ in }\O\,, 
\\ \llbracket\partial_\nu v_f\rrbracket=-\rho f&\text{  on }\partial E\,, 
\\ v_f\in W^{1,2}_0(\O).\end{cases}\end{equation} 
The fact that for any $f\in L^2_\rho(\partial E)$ the function $v_f$ exists and is unique simply follows by minimising the functional
\[ W_{\rho,f}:W^{1,2}_0(\O)\ni v\mapsto \frac12\int_\O|\n v|^2-\int_{\partial E} \rho v f.\] 

Let $\mathrm{Tr}_{E}:W^{1,2}(\O)\to L^2(\partial E)$ be the trace operator. As $E$ is analytic (Lemma \ref{Le:CriticalPointRegular}) this operator is well-defined. Set 
\begin{equation}T_\rho(f):=\mathrm{Tr}_{E}(v_f).\end{equation} 
In particular, \[\forall f \in L^2_\rho(\partial E)\,, T_\rho(f)\in L^2_\rho(\partial E).\]
Clearly, $T_\rho$ is symmetric, positive and compact.
%
In particular, by the spectral decomposition theorem, there exists a sequence of eigen-elements $\left\{\left(\sigma_{k,\rho}\,, \psi_{k,\rho}\right)\right\}_{k\in \N}\in \left(\R_+^*\times L^2_\rho(\partial E)\right)^\N$ such that 
$$\sigma_{k,\rho}\text{ is non-increasing in $k$ and }\sigma_{k,\rho}\underset{k\to \infty}\longrightarrow 0,$$

$$\{\psi_{k,\rho}\}_{k\in \N}\text{ is an orthonormal basis of $L^2_\rho(\partial E)$}$$ and 
$$\forall k\in \N\,, \quad T_\rho(\psi_{k,\rho})=\sigma_{k,\rho}\psi_{k,\rho}.$$

We set $\phi_{k,\rho}:=v_{\psi_{k,\rho}}$ to extend it to a function on $\O$. Hence,
\begin{equation}\label{Eq:eigen}
\begin{cases}
-\Delta \phi_{k,\rho}=0&	\text{ in }\O\,, 
\\ \llbracket\partial_\nu \phi_{k,\rho}\rrbracket=-\rho \psi_{k,\rho}=-\frac1{\sigma_{k,\rho}}\rho \phi_{k,\rho}&\text{  on }\partial E\,, 
\\  \phi_{k,\rho}=0&\text{ on }\partial\O.\end{cases}\end{equation} We thus obtain, defining 
$$\lambda_{k,\rho}:=\frac1{\sigma_{k,\rho}}$$ the system
\begin{multline}\label{Eq:eigen2}
\forall k\in \N^*\,, \begin{cases}
-\Delta \phi_{k,\rho}=0&	\text{ in }\O\,, 
\\ \llbracket\partial_\nu \phi_{k,\rho}\rrbracket=-\lambda_{k,\rho}\rho \phi_{k,\rho}&\text{  on }\partial E\,, 
\\ \phi_{k,\rho}=0&\text{ on }\partial \O\end{cases} \\\text{ and } \forall k\,, k'\in \N^*\,, \int_{\partial E}\rho \phi_{k,\rho}\phi_{k',\rho}=\delta_{k,k'}.\end{multline} The family $\{\phi_{0,\rho}\}\cup \{\phi_{k,\rho}\}_{k\in \N^*}=\{\phi_{k,\rho}\}_{k\in \N}$ is an orthonormal basis of $L^2_\rho(\partial E)$ for the scalar product $ \langle \cdot,\cdot \rangle_\rho$.

From the Courant-Fisher principle we have, furthermore, the following characterisation of the first eigenvalue:
\begin{equation}\label{Eq:RayleighRho}
\lambda_{0,\rho}=\min_{u\in W^{1,2}_0(\O)\,, \int_{\partial E}u^2>0}\frac{\int_\O|\n u|^2}{\int_{\partial E}\rho u^2}.
\end{equation}

\paragraph{Diagonalisation of $\mathscr L_{E}''(E)$.}
We can now turn back to the expression of the second-order shape derivative of the Lagragian \eqref{Eq:EllDirichletLagrangianDS}. We want to determine the weight $\rho$. We use the following notational convention: for any $\Phi \in W^{2,\infty}(\O;\R^d)$,
 \[\p:=\langle \Phi\,, \nu\rangle\,, \tilde\p:=\frac1{\rho} \p.\] The first-order shape derivative $u_\Phi'$ satisfies
\begin{equation}\label{Eq:Chop}
\begin{cases}
-\Delta u_\Phi'=0&\text{ in }\O\,, 
\\\left\llbracket\frac{\partial u_\Phi'}{\partial \nu}\right\rrbracket=-\rho \tilde \p&\text{ on }\partial E\,, 
\\ u_\Phi'=0&\text{ on }\partial \O.\end{cases}\end{equation} 
%
%

 We decompose $\tilde \p$ in the basis $\{\phi_{k,\rho}\}_{k\in \N}$ as
\begin{equation}
\tilde\p=\sum_{k=0}^\infty \alpha_{k,\rho} \phi_{k,\rho}.
\end{equation}
Thus, 
$$u_\Phi'=\sum_{k=0}^\infty\frac{\alpha_{k,\rho}}{\lambda_{k,\rho}} \phi_{k,\rho}.$$
As a conclusion
\begin{align*}
\mathscr L_{E}''(E)[\Phi,\Phi]&=-\int_{\partial E} u_\Phi'\langle \Phi\,, \nu_E\rangle
-\int_{\partial E}\partial_\nu u_{\mathds 1_E}\langle \Phi, \nu_E\rangle^2
\\&=-\int_{\partial E}\rho \tilde\p u_\Phi'
-\int_{\partial E}\rho^2\partial_\nu u_{\mathds 1_E}\left(\tilde \p\right)^2.
\end{align*}
Let us choose 
\[\rho_E=-\frac1{\partial_\nu u_{\mathds 1_E}}.\] From Lemma \ref{Le:V0Lagrange}, we have $\rho_E\in L^\infty([\delta;1/\delta])$ for $\delta$ small enough. Finally, this yields

 \begin{align*}
\mathscr L_{E}''(E)[\Phi,\Phi]&=-\int_{\partial E} u_\Phi'\langle \Phi\,, \nu_E\rangle
-\int_{\partial E}\partial_\nu u_{\mathds 1_E}\langle \Phi, \nu_E\rangle^2
\\&=-\int_{\partial E}\rho_E \tilde\p u_\Phi'
-\int_{\partial E}\rho_E^2\partial_\nu u_{\mathds 1_E}\left(\tilde \p\right)^2
\\&=\sum_{k=0}^\infty \alpha_{k,\rho_E}^2 \left(1-\frac1{\lambda_{k,\rho_E}}\right).
\end{align*}
In particular, given that the sequence $\{\lambda_{k,\rho_E}\}_{k\in \N}$ is non-decreasing, we have the estimate from below:
\begin{equation}\label{Eq:LowerEstimateLagrangian} \mathscr L_E''(E)[\Phi,\Phi]\geq  \left(1-\frac1{\lambda_{0,\rho_E}}\right)\sum_{k=0}^\infty\alpha_{k,\rho_E}^2= \left(1-\frac1{\lambda_{0,\rho_E}}\right)\Vert \langle \Phi,\nu_E\rangle\Vert_{L^2(\partial E)}^2.\end{equation}
Hence we have the following sufficient condition for the stability of a critical shape:
\begin{lemma}\label{Le:SufficientStable}
If $\lambda_{0,\rho_E}>1$, then $E$ is stable in the sense of Definition \ref{De:StableShape}. 
\end{lemma}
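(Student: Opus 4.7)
The plan is to read Lemma \ref{Le:SufficientStable} essentially directly off the diagonalisation identity established in the paragraph preceding the statement. By the spectral expansion derived there, and the computation culminating in \eqref{Eq:LowerEstimateLagrangian}, we already have
\[
\mathscr L_E''(E)[\Phi,\Phi]=\sum_{k=0}^{\infty}\alpha_{k,\rho_E}^2\left(1-\frac{1}{\lambda_{k,\rho_E}}\right).
\]
Since the sequence $\{\lambda_{k,\rho_E}\}_{k\in\N}$ is non-decreasing, under the assumption $\lambda_{0,\rho_E}>1$ each factor $(1-1/\lambda_{k,\rho_E})$ is bounded below by $(1-1/\lambda_{0,\rho_E})>0$, yielding
\[
\mathscr L_E''(E)[\Phi,\Phi]\geq \left(1-\frac{1}{\lambda_{0,\rho_E}}\right)\sum_{k=0}^\infty \alpha_{k,\rho_E}^2.
\]

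The second step I would carry out is to reconcile the weighted norm $\sum_k\alpha_{k,\rho_E}^2$ with the unweighted surface $L^2$-norm appearing in Definition \ref{De:StableShape}. By Parseval in the Hilbert basis $\{\phi_{k,\rho_E}\}$ of $L^2_{\rho_E}(\partial E)$, writing $\varphi=\langle\Phi,\nu_E\rangle$ and $\widetilde{\varphi}=\varphi/\rho_E$,
\[
\sum_{k=0}^\infty \alpha_{k,\rho_E}^2=\|\widetilde{\varphi}\|_{L^2_{\rho_E}(\partial E)}^2=\int_{\partial E}\frac{\varphi^2}{\rho_E}=-\int_{\partial E}\partial_\nu u_{\mathds 1_E}\,\varphi^2.
\]
By Lemma \ref{Le:V0Lagrange} we have $|\partial_\nu u_{\mathds 1_E}|\geq \delta_0>0$ uniformly on $\partial E$, so the last integral is bounded below by $\delta_0 \|\varphi\|_{L^2(\partial E)}^2$.

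Combining the two bounds delivers
\[
\mathscr L_E''(E)[\Phi,\Phi]\geq \delta_0\left(1-\frac{1}{\lambda_{0,\rho_E}}\right)\|\langle\Phi,\nu_E\rangle\|_{L^2(\partial E)}^2,
\]
which is precisely the coercivity demanded by Definition \ref{De:StableShape} with stability constant $c_E:=\delta_0(1-1/\lambda_{0,\rho_E})>0$. There is no genuine obstacle here: the spectral basis, the diagonalisation identity, and the two-sided pointwise control on $\rho_E$ have all been secured in the previous subsections; the argument amounts to a monotonicity-of-eigenvalues observation followed by a one-line comparison between the weighted and unweighted surface norms via Lemma \ref{Le:V0Lagrange}. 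The real work in the subsequent sections of the paper—and the only place where a substantial difficulty should be expected—is the verification that the hypothesis $\lambda_{0,\rho_E}>1$ actually holds for critical shapes under a large volume constraint (i.e.\ that $\rho_E$ can be chosen to make the Steklov-type eigenvalue strictly larger than $1$), which is what Proposition \ref{Pr:CriticalCoercivity} will eventually require.
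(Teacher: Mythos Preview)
Your proof is correct and follows the paper's approach: the paper presents Lemma \ref{Le:SufficientStable} as an immediate consequence of the diagonalisation identity and the lower bound \eqref{Eq:LowerEstimateLagrangian}, with no separate proof. You are in fact slightly more careful than the paper, which writes the equality $\sum_k\alpha_{k,\rho_E}^2=\Vert\langle\Phi,\nu_E\rangle\Vert_{L^2(\partial E)}^2$ in \eqref{Eq:LowerEstimateLagrangian} whereas Parseval in $L^2_{\rho_E}(\partial E)$ actually gives the weighted quantity $\int_{\partial E}\varphi^2/\rho_E$; your explicit use of the two-sided bound on $\rho_E$ from Lemma \ref{Le:V0Lagrange} to pass to the unweighted norm is the correct way to close this small gap.
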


To prove Proposition \ref{Pr:CriticalCoercivity} we study the asymptotic behaviour of $\lambda_{0,\rho_E}$ as $V_0\to 1$.

\paragraph{Asymptotic behaviour of $\lambda_{0,\rho_E}$ as $V_0\to 1$.}
We prove here the following lemma:
\begin{lemma}\label{Le:EigenvalueBlowup}
There holds \[\min_{E\text{ critical shape for the volume constraint $V_0$}}\lambda_{0,E}\underset{V_0\to 1}\rightarrow \infty.\]
\end{lemma}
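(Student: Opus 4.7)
The plan is to combine the Rayleigh characterization \eqref{Eq:RayleighRho} with the uniform $\mathscr C^{1,\alpha}$ convergence $\partial \omega_{f,V_0} \to \partial \Omega$ provided by Lemma \ref{Le:V0Lagrange}. Since $\rho_E = -1/\partial_\nu u_{\mathds 1_E}$ and Lemma \ref{Le:V0Lagrange} yields $\min_{\partial E}|\partial_\nu u_{\mathds 1_E}| \geq \delta_0$, one has $\|\rho_E\|_{L^\infty(\partial E)} \leq 1/\delta_0$ uniformly in the critical shape $E$. From \eqref{Eq:RayleighRho} this gives
\[\lambda_{0,\rho_E} \;\geq\; \delta_0 \cdot \inf_{\substack{u \in W^{1,2}_0(\Omega)\\ \int_{\partial E} u^2 > 0}} \frac{\int_\Omega |\nabla u|^2}{\int_{\partial E} u^2},\]
and it suffices to prove that the rightmost infimum blows up as $V_0 \to 1$, uniformly in the critical $E$.

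To do so, I would invoke Lemma \ref{Le:V0Lagrange} once more: for $V_0$ close enough to $1$, any critical $E = \omega_{\mathds 1_E, V_0}$ is a $\mathscr C^{1,\alpha}$ graph over $\partial \Omega$, so there exists $t_E : \partial \Omega \to \R_+$ with $\partial E = \{y - t_E(y)\nu_\Omega(y),\, y \in \partial \Omega\}$ and $\|t_E\|_{L^\infty(\partial \Omega)} \to 0$ uniformly over critical $E$ as $V_0 \to 1$. Given $u \in W^{1,2}_0(\Omega)$, the vanishing of $u$ on $\partial \Omega$ together with the fundamental theorem of calculus and Cauchy--Schwarz yield, for $x = y - t_E(y)\nu_\Omega(y) \in \partial E$,
\[|u(x)|^2 \;=\; \Bigl|\int_0^{t_E(y)} \langle \nabla u(y - s\nu_\Omega(y)), \nu_\Omega(y)\rangle\, ds\Bigr|^2 \;\leq\; t_E(y) \int_0^{t_E(y)} |\nabla u(y - s\nu_\Omega(y))|^2\, ds.\]
Integrating over $\partial E$, changing variables via $y \mapsto y - t_E(y)\nu_\Omega(y)$ (whose Jacobian is uniformly bounded above and below by the uniform $\mathscr C^{1,\alpha}$ control of $t_E$) and invoking Fubini to compare the integral over the tubular region $\{y - s\nu_\Omega(y),\, y \in \partial \Omega, 0 \leq s \leq t_E(y)\} \subset \Omega$ to the one over $\Omega$, one obtains
\[\int_{\partial E} u^2 \;\leq\; C\,\|t_E\|_{L^\infty(\partial \Omega)} \int_\Omega |\nabla u|^2,\]
for some constant $C$ depending only on $\Omega$ and on the uniform $\mathscr C^{1,\alpha}$ bound from Lemma \ref{Le:V0Lagrange}. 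Combining the two displays gives $\lambda_{0,\rho_E} \geq \delta_0/(C\|t_E\|_{L^\infty(\partial \Omega)})$, which tends to $+\infty$ uniformly in $E$ critical as $V_0 \to 1$.

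The main obstacle is really just the uniformity of the geometric constants (Jacobian of the graph parametrization, thickness of the tubular neighborhood, validity of Fubini's argument) across the whole family of critical shapes. However, all these constants depend only on the $\mathscr C^{1,\alpha}$ norm of the graph function $t_E$, which is controlled uniformly in $E$ critical by the last statement of Lemma \ref{Le:V0Lagrange}. A minor technical point is to justify the pointwise application of the fundamental theorem of calculus for arbitrary $u \in W^{1,2}_0(\Omega)$, which is handled by density of smooth functions and by the fact that the trace of $u$ on the $\mathscr C^{1,\alpha}$ hypersurface $\partial E$ is well defined.
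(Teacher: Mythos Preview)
Your proof is correct and takes a genuinely different route from the paper's. Both arguments begin with the same reduction: using $\rho_E\leq 1/\delta_0$ from Lemma~\ref{Le:V0Lagrange} to bound $\lambda_{0,\rho_E}$ from below by $\delta_0$ times the unweighted Rayleigh quotient $\inf_{u\in W^{1,2}_0(\O)}\int_\O|\nabla u|^2/\int_{\partial E}u^2$. After that the two approaches diverge. The paper argues by contradiction: assuming the unweighted eigenvalue stays bounded, it runs a duality--bootstrap argument to show that the corresponding first eigenfunction $\varphi_\e$ is uniformly $\mathscr C^{0,\beta}$, and then obtains a contradiction from $1=\int_{\partial E_\e}\varphi_\e^2\to\int_{\partial\O}\varphi_\e^2=0$ as $\partial E_\e\to\partial\O$. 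You instead prove a direct trace inequality: parametrising $\partial E$ as a normal graph $y\mapsto y-t_E(y)\nu_\O(y)$ over $\partial\O$ and integrating $|u|^2$ along the normal segment from $\partial\O$ (where $u$ vanishes) to $\partial E$, you get $\int_{\partial E}u^2\leq C\Vert t_E\Vert_{L^\infty(\partial\O)}\int_\O|\nabla u|^2$, hence the explicit rate $\lambda_{0,\rho_E}\gtrsim 1/\Vert t_E\Vert_{L^\infty}$.

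Your approach is more elementary---it avoids the regularity lemma and the bootstrap entirely---and it yields a quantitative blow-up rate, which the paper's contradiction argument does not. The paper's route, on the other hand, is agnostic to the precise geometry of the convergence $\partial E\to\partial\O$: it would still work if one only knew Hausdorff convergence together with uniform Lipschitz bounds, whereas your argument genuinely uses the normal-graph structure and the uniform $\mathscr C^{1,\alpha}$ control of $t_E$ furnished by Lemma~\ref{Le:V0Lagrange}. In the present setting both are available, and your argument is the cleaner one; it is in fact the higher-dimensional version of the one-dimensional computation the paper sketches in its heuristic remark before the proof. One small point worth making explicit in a final write-up is that the ``locally a graph'' conclusion of Lemma~\ref{Le:V0Lagrange}, together with the connectedness of $\partial E$ (Lemma~\ref{Le:OmegaConnectedDirichlet}) and the tubular-neighbourhood theorem for $\partial\O$, upgrades to a global normal-graph parametrisation for $V_0$ close enough to $1$.
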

\begin{remark}[Heuristic approach to Lemma \ref{Le:EigenvalueBlowup}]Assume $\O=(-1;1)$ and $V_0=1-\e$, where $\e>0$ is a small parameter. In this case, we may use the Schwarz rearrangement to obtain that the unique solution of \eqref{Pv:Dirichlet} is $f^*=\mathds1_{I_\e}$ where 
\[ I_\e=(-1+\e/2;1-\e/2).\] Let $u_\e$ be the associated solution of \eqref{Eq:Main}. Clearly, 
\[u_\e\underset{\e \to 0}\rightarrow u_0\] in $\mathscr C^1(\O)$, where $u_0$ is the solution of \eqref{Eq:Main} with $f\equiv 1$. In particular, 
\[ |u_\e'(\pm 1\mp\e/2)|=a_\e\underset{\e\to 0}\rightarrow1.\]
Now let us study the lowest eigenvalue $\lambda_{0,\rho_{I_\e}}$ which, for the sake of notational convenience, we rewrite as $\lambda_0(\e)$. But notice the following thing: if we define 
\[ \p_{0,\e}:x\mapsto \begin{cases}1\text{ if }|x|\leq 1-\e/2,
\\ \frac{2}{\e a_\e} (1-|x|) \text{ if } 1-\e/2\leq |x|\leq 1
\end{cases}\]
it appears that $\p_{0,\e}$ is an eigenfunction associated with the eigenvalue $\frac2\e$. As it has a constant sign, and as the eigenfunction associated with the lowest eigenvalue is the only one that has a constant sign, we deduce that 
\[ \lambda_{0,\rho_{I_\e}}(\e)=\frac{2}\e,\]
which indeed goes to $\infty$ as $\e\to 0^+$.

The situation is of course more complicated in higher dimensions. Here is a roadmap to prove the same type of results. We already observed in the proof of Lemma \ref{Le:V0Lagrange} that $u_f\underset{V_0\to 1}\rightarrow w$ uniformly in $f$, as $\e\to 0$, where $w$ is the torsion function of the set $\O$. As $\O$ has a regular boundary, the Hopf maximum principle entails that 
\[ \inf_{\partial \O}\left(-\frac{\partial w}{\partial \nu}\right)=\delta(\O)>0.\]
Now consider the set $X^*_\e$ of critical shapes. From the definition of criticality, it follows that, for any $E^*\in X^*$, there exists $\mu(E^*)$ such that $E^*=\{u_{\mathds 1_{E^*}}>\mu(E^*)\}$. 
We want to prove that 
\[ \lim\inf_{\e \to 0^+}\min_{E^*\in X^*_\e}\lambda_{0,\rho_{E^*}}=+\infty.\] We argue by contradiction and assume this is not the case. This gives a sequence $\e\to 0^+$ and a sequence $\{E_\e\}_{\e>0}\in \prod_{\e>0}X^*_\e$ such that
\[\sup_\e \lambda_{0,\rho_{E^*}}\leq M_0<\infty.\] By Lemma \ref{Le:V0Lagrange}, $E^*_\e$ converges in $\mathscr C^{1,\alpha}$ to $\O$ (for any $\alpha<1$).  Now observe that, in fact, by a standard comparison principle, we may be satisfied to prove that $\lambda(\e)\to \infty$ where $\lambda(\e)$ is defined as 
\[ \lambda(\e):=\inf_{\p\in W^{1,2}_0(\O)}\frac{\int_\O |\n \p|^2}{\int_{\partial E_\e}\p^2}.\] This follows from the lower bound on $|\partial_\nu u_\e|$ given by Lemma \ref{Le:V0Lagrange}. The idea is then that, if the eigenvalues are uniformly bounded, then we should have on the one-hand (up to renormalisation)
\[ \int_{E_\e}\p_\e^2=1\] 
and, on the other hand, we should have enough regularity on $\p_\e$ to write 
\[ \int_{E_\e}\p_\e^2\sim \int_{\partial \O}\p_\e^2=0.\] This would be an obvious contradiction.
\end{remark}

\begin{proof}[Proof of Lemma \ref{Le:EigenvalueBlowup}]
We first observe that from Lemma \ref{Le:V0Lagrange} and the Rayleigh quotient formulation \eqref{Eq:RayleighRho} of $\lambda_{0,\rho_E}$  we have, for $V_0\in (1-\e_0;\e)$,
\[\lambda_{0,\rho_E}\geq  \delta_0\min_{u\in W^{1,2}_0(\O)\,, \int_{\partial E}u^2\neq 0}\frac{\int_\O |\n u|^2}{\int_{\partial E} u^2}.
\]

Now, argue by contradiction and assume  that there exists a sequence $\e\to0$, a constant $M_0>0$, and, for any $\e>0$, a critical set $E_\e$ such that $\mathds 1_{E_\e}\in \mathcal F(V_0)$ with $V_0\in (1-\e;1)$ that satisfies 
\[ \lambda_{0,\rho_{E_\e}}\leq M_0.\] Define, for any $\e>0$, 
\[ \lambda(\e):=\min_{u\in W^{1,2}_0(\O)\,, \int_{\partial E_\e}u^2\neq 0}\frac{\int_\O |\n u|^2}{\int_{\partial E_\e} u^2}.\] As already observed we have $\delta_0\lambda(\e)\leq \lambda_{0,\rho_{E_\e}}$ so that the sequence $\{\lambda(\e)\}_{\e \to 0}$ is uniformly bounded. In other words we have 
\begin{equation}\label{Eq:LambdaBounded} 0<\lambda(\e)\leq M_1<\infty\end{equation} for some suitable constant $M_1$. We define, for any $\e>0$, $\p_\e$ as the first eigenfunction associated with $\lambda(\e)$; in other words, $\p_\e$ solves the equation 
\begin{equation}\label{Eq:PhiEpsilon}
\begin{cases}
-\Delta \p_\e=0&\text{ in }\O\,, 
\\ \left\llbracket \frac{\partial \p_\e}{\partial \nu}\right\rrbracket=-\lambda(\e)\p_\e&\text{ on }\partial E_\e\,, 
\\ \p_\e\in W^{1,2}_0(\O)\,, 
\\ \int_{\partial E_\e} \p_\e^2=1.
\end{cases}
\end{equation}
Let us prove that if \eqref{Eq:LambdaBounded} holds then:
\begin{equation}\label{Eq:PhiEpsilonHolder}
\forall \beta \in (0;1)\,, \exists c_\beta\,, \sup_{\e>0}\Vert \p_\e\Vert_{\mathscr C^{0,\beta}}\leq c_\beta.
\end{equation}
First of all, from the uniform regularity estimates of Lemma \ref{Le:V0Lagrange} and from \cite[Theorem 18.34]{Leoni}, for any $p\in (1;+\infty)$, there exists a constant $C_p$ such that 
\[ \forall \e>0\,, \Vert \mathrm{Tr}_{E_\e}\Vert_{\mathcal L(W^{1,p}_0(\O);W^{1-\frac1p,p}(\partial E_\e))}\leq C_p.\] 

The idea is to use a bootstrap argument as well as the duality method of Stampacchia. Let show how to initialise the bootstrap. As 
\[ \int_\O |\n \p_\e|^2=\lambda(\e)\] it follows that
\[ \sup_\e \Vert \p_\e\Vert_{W^{\frac12,2}(\partial E_\e)}\leq C_0 M_1\] where $C_0$ is a trace embedding constant which only depends on $\sup_\e \mathrm{Lip}(E_\e)$.  Let $2'=\frac{2d}{2d-3}$ be the conjugate Sobolev exponent of 2 on the boundary.  Then we have 
\[ \sup_\e \Vert \p_\e\Vert_{L^{2'}(E_\e)}<C_1.\]

By a bootstrapping argument, we see that it suffices to prove the following regularity Lemma:
\begin{lemma}There exists a constant $C$ that only depends on $\mathrm{Lip}(E)$ and the covering number of $E$ such that the following holds: assume  $\psi$ solves 
\[ \begin{cases}-\Delta \psi=0\,, 
\\ \llbracket \partial_\nu \psi\rrbracket=-p\in L^q(\partial E)\end{cases}\] then \[ \Vert \psi\Vert_{W^{1,q}(\O)}\leq  C\Vert p\Vert_{L^q(\partial E)}.\]
\end{lemma}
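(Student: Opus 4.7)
The plan is to proceed by a duality argument à la Stampacchia/Meyers. First, I would spell out the weak formulation: the assumption $-\Delta\psi=0$ with $\llbracket\partial_\nu\psi\rrbracket=-p$ on $\partial E$ and Dirichlet conditions on $\partial\Omega$ translates into
\[
\forall v\in W^{1,2}_0(\Omega),\qquad \int_\Omega \langle\nabla\psi,\nabla v\rangle=\int_{\partial E} p\,v.
\]
Since $\mathrm{Lip}(E)$ and the covering number of $E$ are fixed, the trace operator $\mathrm{Tr}_{\partial E}:W^{1,r}(\Omega)\to L^{r}(\partial E)$ is bounded with a constant $C_0=C_0(\mathrm{Lip}(E),\text{cov}(E))$ for every $r\in(1,\infty)$; in particular, the right-hand side above defines a continuous linear form on $W^{1,q'}_0(\Omega)$ of norm controlled by $C_0\Vert p\Vert_{L^q(\partial E)}$, where $q'$ is the conjugate exponent of $q$.

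Next I would invoke Calder\'on--Zygmund regularity for the Dirichlet Laplacian on the fixed $\mathscr C^2$ domain $\Omega$: for any $g\in L^{q'}(\Omega;\R^d)$, the unique solution $w\in W^{1,2}_0(\Omega)$ of $-\Delta w=-\mathrm{div}(g)$ with zero Dirichlet datum satisfies $\Vert w\Vert_{W^{1,q'}(\Omega)}\leq C_\Omega\Vert g\Vert_{L^{q'}(\Omega)}$ (this is where the $\mathscr C^2$ regularity of $\Omega$ is used; the constant $C_\Omega$ is independent of $E$). Testing the equation for $\psi$ against $w$, and the equation for $w$ against $\psi$, both of which are admissible since $\psi,w\in W^{1,2}_0(\Omega)$, yields the identity
\[
\int_\Omega \langle g,\nabla\psi\rangle=\int_\Omega \langle \nabla w,\nabla\psi\rangle=\int_{\partial E} p\,w.
\]

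Combining these pieces with H\"older's inequality on $\partial E$ and the trace estimate gives
\[
\Bigl|\int_\Omega \langle g,\nabla\psi\rangle\Bigr|\leq \Vert p\Vert_{L^q(\partial E)}\Vert w\Vert_{L^{q'}(\partial E)}\leq C_0 C_\Omega\, \Vert p\Vert_{L^q(\partial E)}\Vert g\Vert_{L^{q'}(\Omega)}.
\]
Taking the supremum over $g$ in the unit ball of $L^{q'}(\Omega;\R^d)$ controls $\Vert\nabla\psi\Vert_{L^q(\Omega)}$, and Poincar\'e's inequality (using $\psi=0$ on $\partial\Omega$) upgrades this to an estimate on $\Vert\psi\Vert_{W^{1,q}(\Omega)}$. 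The main obstacle is verifying that the trace constant from $W^{1,q'}(\Omega)$ into $L^{q'}(\partial E)$ genuinely depends only on $\mathrm{Lip}(E)$ and the covering number of $E$ (and not on finer regularity of $\partial E$): this reduces, via a partition of unity subordinated to the covering and a flattening of $\partial E$ by bi-Lipschitz charts whose Lipschitz norms are controlled, to the classical Gagliardo trace inequality on a half-space, which is exactly the kind of estimate whose constant has the claimed dependence.
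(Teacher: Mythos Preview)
Your proof is correct and follows essentially the same duality argument as the paper: solve an auxiliary Dirichlet problem $-\Delta w=-\mathrm{div}(g)$ (the paper writes $-\Delta v_F=\nabla\cdot F$ for smooth $F$), invoke the Calder\'on--Zygmund estimate $\Vert w\Vert_{W^{1,q'}(\Omega)}\leq C_\Omega\Vert g\Vert_{L^{q'}(\Omega)}$, test the weak formulation of $\psi$ against $w$, and bound $\int_{\partial E} p\,w$ via H\"older and the trace inequality whose constant depends only on $\mathrm{Lip}(E)$ and the covering number. Your version is slightly more explicit than the paper's (you spell out the Poincar\'e step and the dependence of the trace constant), but the argument is the same.
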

\begin{proof}
Recall that, by standard elliptic estimates (see for instance \cite[Theorem 1.1]{Amrouche2020}), for any smooth vector field $F\in \mathscr C^\infty(\O)$, the solution $v_F$ of 
\[
\begin{cases}-\Delta v_F=\nabla\cdot F&\text{ in }\O\,, 
\\ v_F=0&\text{ on }\partial \O\end{cases}\] satisfies 
\[ \Vert v_F\Vert_{W^{1,r}(\O)}\leq D_r\Vert F\Vert_{L^r(\O)}\] for some constant $D_r=D(r,\O)$.

Now, consider a smooth vector field $F$, then there holds 
\[ \int_\O \langle \n \psi,F\rangle=\int_\O \langle \n \psi,\n v_F\rangle=\int_{\partial E} v_F p\leq C_E\Vert v_F\Vert_{W^{1,r}(\O)}\Vert p\Vert_{L^q(\partial E)}.\] Here, $C$ is the trace embedding constant; it only depends on $\mathrm{Lip}(E)$ and on the covering number of $\partial E$. Thus we get
\[ \int_\O \langle \n \psi,F\rangle\leq C_ED_r\Vert F\Vert_{L^r(\O)}\Vert p\Vert_{L^q(\partial E)}.\]Thus $\psi\in W^{1,q}(\O)$ and 
\[ \Vert \psi\Vert_{W^{1,q}(\O)}\leq C_ED_r \Vert p\Vert_{L^q(\partial E)}.\]
\end{proof}
Consequently we deduce by a bootstrap argument that, if \eqref{Eq:LambdaBounded} holds then, for any $p\in [1;+\infty)$, 
\[ \sup_{\e}\Vert \p_\e\Vert_{W^{1,p}(\O)}<\infty.\] We then conclude by the Sobolev embedding $W^{1,p}(\O)\hookrightarrow \mathscr C^{0,\beta}(\O)$. Thus \eqref{Eq:PhiEpsilonHolder} holds.

Finally, to derive a contradiction, we use once again the regularity results of Lemma \ref{Le:V0Lagrange}: from \eqref{Eq:PhiEpsilonHolder} and the fact that $\partial E_\e$ converges in $\mathscr C^1$ to $\partial \O$ we should have 
\begin{equation}
1=\int_{\partial E_\e} \p_\e^2\underset{\e \to 0}\rightarrow 0,
\end{equation}
an obvious contradiction. This concludes the proof of Lemma \ref{Le:EigenvalueBlowup}.
\end{proof}

We can now prove the uniform stability result of Proposition \ref{Pr:CriticalCoercivity}.
\begin{proof}[Proof of Proposition \ref{Pr:CriticalCoercivity}]
From the lower estimate \eqref{Eq:LowerEstimateLagrangian} we have, for any critical shape $E$ and any compactly supported vector field 
\[ \mathscr L_E''(E)[\Phi,\Phi]\geq \left(1-\frac{1}{\lambda_{0,\rho_E}}\right)\Vert \langle \Phi,\nu_E\rangle\Vert_{L^2(\partial E)}^2.\] From Lemma \ref{Le:EigenvalueBlowup}, there exists $\e_1>0$ such that, for any $V_0\in(1-\e_1;1)$,  for any critical shape $E$, 
\[ \lambda_{0,\rho_E}>2\] whence, fixing this $\e_1$, for any $V_0\in (1-\e_1;1)$, for any critical shape $E$, 
\[ \mathscr L_E''(E)[\Phi,\Phi]\geq 2\Vert \langle \Phi,\nu_E\rangle\Vert_{L^2(\partial E)}^2.\] This finishes the proof.
\end{proof}

\paragraph{Local quantitative inequalities around critical shapes.}
We now come to the two consequences of the previous analysis.  The first one is that critical shapes are $L^1$ isolated. The second one is that, whenever $V_0$ is close enough to 1, any critical shape is in fact a local $L^1$ minimiser. All the results of this section are straightforward adaptations of similar results in \cite{MRB2020}.

We begin with the first of these results.

\begin{lemma}\label{Le:CriticalIsolated}
Let $\e_1>0$ be given by Proposition \ref{Pr:CriticalCoercivity}. For any $V_0\in (1-\e_1;1)$ the critical shapes are isolated in the sense that 
\[ \inf_{E,E'\text{ critical, }E'\neq E}\left| E'\Delta E\right|=\delta(V_0)>0.\]
\end{lemma}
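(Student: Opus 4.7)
The plan is to argue by contradiction. Fix $V_0 \in (1-\e_1; 1)$ and suppose there exist sequences $\{E_n\}_{n \in \N}$, $\{E_n'\}_{n \in \N}$ of distinct critical shapes in $\mathcal F(V_0)$ with $\mathrm{Vol}(E_n \Delta E_n') \underset{n\to\infty}\rightarrow 0$. By Lemma \ref{Le:CriticalPointRegular}, both families are uniformly bounded in the $\mathscr{C}^2$ topology, so I can extract subsequences that converge in $\mathscr{C}^{1,\alpha}$ to a common critical shape $E^*$. For $n$ large I will parametrise $E_n' = (\mathrm{Id} + \Phi_n)(E_n)$ with $\Phi_n = h_n \nu_{E_n}$ normal on $\partial E_n$ and $\|h_n\|_{\mathscr{C}^{1,\alpha}(\partial E_n)} \underset{n\to\infty}\rightarrow 0$, and symmetrically $E_n = (\mathrm{Id} + \Psi_n)(E_n')$ with $\Psi_n = g_n \nu_{E_n'}$. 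The volume constraint $\mathrm{Vol}(E_n) = \mathrm{Vol}(E_n') = V_0\mathrm{Vol}(\O)$ will give the almost-zero-mean relation $\int_{\partial E_n} h_n = O(\|h_n\|_{L^\infty}^2)$.

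The key algebraic observation I will exploit is that, since $\mathscr L_E = \mathscr F + \mu_E \mathrm{Vol}$ and $\mathrm{Vol}(E_n) = \mathrm{Vol}(E_n')$,
\begin{equation*}
\mathscr L_{E_n}(E_n') - \mathscr L_{E_n}(E_n) + \mathscr L_{E_n'}(E_n) - \mathscr L_{E_n'}(E_n') = 0.
\end{equation*}
Taylor-expanding each of the two differences at its respective critical shape and using that the first-order term vanishes by Lemma \ref{Le:LagrangeMultiplier} will yield
\begin{equation*}
\tfrac{1}{2}\,\mathscr L_{E_n}''(E_n)[\Phi_n,\Phi_n] + \tfrac{1}{2}\,\mathscr L_{E_n'}''(E_n')[\Psi_n,\Psi_n] + R_n + R_n' = 0,
\end{equation*}
where $R_n, R_n'$ are cubic Taylor remainders in the deformation. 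Proposition \ref{Pr:CriticalCoercivity} then bounds each quadratic form from below by $\underline c \|h_n\|_{L^2(\partial E_n)}^2$ (resp.\ $\underline c \|g_n\|_{L^2(\partial E_n')}^2$), and a standard computation relying on the uniform $\mathscr C^2$ regularity of the boundaries provides $\|h_n\|_{L^2(\partial E_n)}^2 \gtrsim \mathrm{Vol}(E_n \Delta E_n')^2/\mathrm{Per}(E_n)$.

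The hard part will be controlling the remainders $R_n, R_n'$: a naive Taylor estimate bounds them by $C \|h_n\|_{\mathscr C^{1,\alpha}}^3$ or worse, which a priori is not comparable to the $L^2$-coercive lower bound since the $\mathscr C^{1,\alpha}$ norm does not control the $L^2$ norm. To close the argument I will follow the strategy developed in \cite{MRB2020} (itself an adaptation of \cite{DambrineLamboley}) for optimal-control problems: using the explicit expression of $\mathscr L_E''$ from Lemma \ref{Le:SecondSD}, the uniform $W^{2,p}$ estimates on $u_{\mathds 1_E}$ and the smoothing properties of the transmission problem \eqref{Eq:FSDu} defining $u'_\Phi$, one reduces the $\mathscr C^{1,\alpha}$-factors in the remainder to purely $L^2$-factors multiplied by a small quantity $o_n(1)$ originating from $\|h_n\|_{\mathscr C^{1,\alpha}} \to 0$. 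Absorbing this term into the coercive lower bound will give $\tfrac{\underline c}{2}(\|h_n\|_{L^2(\partial E_n)}^2 + \|g_n\|_{L^2(\partial E_n')}^2) \leq 0$ for $n$ large, forcing $h_n \equiv g_n \equiv 0$ and hence $E_n = E_n'$, contradicting our standing assumption.
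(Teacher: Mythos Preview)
Your proposal is sound and aligns with the paper's own treatment: the paper does not give an independent proof but simply refers to \cite[Proposition 23]{MRB2020}, and your sketch is precisely a reconstruction of that argument (coercivity of the shape Hessian from Proposition~\ref{Pr:CriticalCoercivity} combined with the $L^2$-remainder control of \cite{MRB2020,DambrineLamboley}). The symmetrisation identity you exploit,
\[
\bigl(\mathscr L_{E_n}(E_n')-\mathscr L_{E_n}(E_n)\bigr)+\bigl(\mathscr L_{E_n'}(E_n)-\mathscr L_{E_n'}(E_n')\bigr)=0,
\]
is a clean way to package the argument; note that it is equivalent to applying the local quantitative minimality of Lemma~\ref{Le:CriticalMinimal} at both $E_n$ and $E_n'$ and adding, so your route and the paper's citation ultimately rest on the same ingredients.
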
 This is proved exactly as \cite[Proposition 23]{MRB2020}.
\begin{lemma}\label{Le:CriticalMinimal}
Let $\e_1$ be given by Proposition \ref{Pr:CriticalCoercivity}. For any $V_0\in (1-\e_1;1)$, any critical shape $E$ is a local $L^1$ minimiser: there exist $c_E>0$ and $r_E>0$ such that, for any $f\in \mathcal F(V_0)$ with $\Vert f-\mathds 1_E\Vert_{L^1(\O)}\leq r_E$, there holds
\[ \mathscr E(f)-\mathscr E(\mathds 1_E)\geq c_E\Vert f-\mathds 1_E\Vert_{L^1(\O)}^2.\]
\end{lemma}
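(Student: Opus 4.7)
The plan is to follow the Dambrine--Lamboley stability template adapted to optimal control problems as in \cite{MRB2020}: combine the $L^2$-coercivity of the shape Lagrangian (Proposition \ref{Pr:CriticalCoercivity}) with a reduction from general admissible controls to bang-bang controls via one step of thresholding.

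First I would handle the bang-bang case. Given $F\subset \O$ with $\mathds 1_F\in \mathcal F(V_0)$ and $F$ close to $E$ in $\mathscr C^{1,\alpha}$, I would construct a compactly supported vector field $\Phi\in W^{2,\infty}(\O;\R^d)$ such that $F=(\mathrm{Id}+\Phi)E$; this is possible because $\partial E$ is analytic (Lemma \ref{Le:CriticalPointRegular}) and $\partial F$ is then a graph over $\partial E$ when $F$ is sufficiently close. Setting $E_t:=(\mathrm{Id}+t\Phi)E$ and writing Taylor's formula for the shape Lagrangian (Definition \ref{De:ShapeLagrangian}) at the critical shape $E$:
\[
\mathscr L_E(F)-\mathscr L_E(E)=\int_0^1(1-t)\,\mathscr L_E''(E_t)[\Phi,\Phi]\,dt,
\]
where I used $\mathscr L_E'(E)[\Phi]=0$ (Lemma \ref{Le:LagrangeMultiplier}). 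Showing that $t\mapsto \mathscr L_E''(E_t)[\Phi,\Phi]$ is continuous at $0$ with modulus uniform in $\Phi$ normalised in $L^2(\partial E)$ --- this continuity follows from elliptic regularity estimates transported along $E_t$ and from continuous dependence of the Steklov basis used in the diagonalisation --- combined with Proposition \ref{Pr:CriticalCoercivity} yields $\mathscr L_E(F)-\mathscr L_E(E)\geq \tfrac{\underline c}{4}\|\langle \Phi,\nu_E\rangle\|_{L^2(\partial E)}^2$ for $\Phi$ small enough. On $\mathcal F(V_0)$ the volume term in $\mathscr L_E$ is constant, so this reads $\mathscr E(\mathds 1_F)-\mathscr E(\mathds 1_E)\geq c\,\|\langle\Phi,\nu_E\rangle\|_{L^2(\partial E)}^2\geq c'\,|F\Delta E|^2$ after Cauchy--Schwarz (using $|F\Delta E|\sim \int_{\partial E}|\langle\Phi,\nu_E\rangle|$ for small normal perturbations).

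Next I would extend the inequality to arbitrary $f\in\mathcal F(V_0)$ by performing one thresholding step: define $F:=\{u_f>c_f\}$ with $|F|=V_0|\O|$, well-defined thanks to Lemma \ref{Le:V0Lagrange}. By concavity of $\mathscr E$ (Lemma \ref{Le:BasicDirichlet}) one has $\mathscr E(\mathds 1_F)\leq \mathscr E(f)+\dot{\mathscr E}(f)[\mathds 1_F-f]=\mathscr E(f)-\int_\O(\mathds 1_F-f)u_f$. Applying the quantitative bathtub principle (Proposition \ref{Pr:QuantitativeBathtub}) to $u=u_f$, whose regularity and non-degeneracy on $\partial F$ are supplied uniformly by Lemma \ref{Le:V0Lagrange}, gives $\int_\O(\mathds 1_F-f)u_f\geq c_\star\|\mathds 1_F-f\|_{L^1(\O)}^2$, whence
\[
\mathscr E(f)-\mathscr E(\mathds 1_F)\geq c_\star\,\|f-\mathds 1_F\|_{L^1(\O)}^2.
\]
Since $\|f-\mathds 1_E\|_{L^1}\to 0$ implies $u_f\to u_{\mathds 1_E}$ in $\mathscr C^{1,\alpha}(\overline\O)$ (continuous dependence, as in \eqref{Eq:CvTorsion}), the set $F$ lies in an arbitrarily small $\mathscr C^{1,\alpha}$ neighbourhood of $E$ provided $r_E$ is chosen small enough, and the bang-bang inequality above applies: $\mathscr E(\mathds 1_F)-\mathscr E(\mathds 1_E)\geq c'\,|F\Delta E|^2$. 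Adding the two estimates and using $a^2+b^2\geq \tfrac12(a+b)^2$ together with the triangle inequality $\|f-\mathds 1_E\|_{L^1}\leq \|f-\mathds 1_F\|_{L^1}+|F\Delta E|$ concludes with $c_E:=\tfrac12\min(c_\star,c')>0$.

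The main obstacle is the continuity estimate $\mathscr L_E''(E_t)[\Phi,\Phi]\to \mathscr L_E''(E)[\Phi,\Phi]$ with modulus uniform in $\Phi$, which is the workhorse of the Dambrine--Lamboley method in this setting. It requires transferring the shape derivative formulas and the Steklov diagonalisation basis \eqref{Eq:eigen2} from $E$ to the moving domain $E_t$ with quantitative control, using that both the normal derivative $|\partial_\nu u_{\mathds 1_{E_t}}|$ and the first eigenvalue $\lambda_{0,\rho_{E_t}}$ depend continuously on $t$. This is purely technical but standard, and follows the scheme already carried out in \cite{DambrineLamboley,MRB2020}.
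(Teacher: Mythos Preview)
Your proposal is correct and matches the paper's approach: the paper gives no self-contained proof here, deferring entirely to \cite[Theorem 1]{MRB2020}, and your two-step scheme --- coercivity of the shape Lagrangian for bang-bang competitors via the Dambrine--Lamboley Taylor expansion and Proposition \ref{Pr:CriticalCoercivity}, followed by reduction of a general $f\in\mathcal F(V_0)$ to a bang-bang competitor through one thresholding step and the quantitative bathtub principle --- is exactly that adaptation. The only cosmetic point is that in your Taylor formula the quantity $\mathscr L_E''(E_t)[\Phi,\Phi]$ should be read as $g''(t)$ for $g(t)=\mathscr L_E((\mathrm{Id}+t\Phi)E)$ rather than as a shape hessian at the non-critical shape $E_t$; this is the standard convention in \cite{DambrineLamboley,MRB2020}.
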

This is a straightforward adaptation of the proof of \cite[Theorem 1]{MRB2020}.
%
%

\color{black}
%

\subsection{Conclusion of the proof of Theorem \ref{Th:Dirichlet}}
\begin{proof}[Proof of Theorem \ref{Th:Dirichlet}]
Consider $\e\leq \e_1$ where $\e_1$ is given by Proposition \ref{Pr:CriticalCoercivity}. For any $V_0\in (1-\e;1)$, for any initialisation $f_0\in \mathcal F(V_0)$, let $\{f_k\}_{k\in \N}$ be the sequence generated by the thresholding algorithm. From Lemma \ref{Le:MultipleClosurePoints} the sequence $\{f_k\}_{k\in \N}$ has either one or infinitely many closure points. From  Lemma \ref{Le:ClosurePointBgbg} any closure point $f_\infty$ of $\{f_k\}_{k\in \N}$ is a critical point. From Lemma \ref{Le:CriticalIsolated}, critical points are isolated. Hence, there exists a unique closure point for the sequence $\{f_k\}_{k\in \N}$. Call it $f_\infty$. From Lemma \ref{Le:CriticalMinimal}, $f_\infty$ is a local minimiser. As any critical point is an extreme point of $\mathcal F(V_0)$ the sequence $\{f_k\}_{k\in \N}$ converges strongly in $L^1(\O)$ to $f_\infty$. The proof is concluded.
\end{proof}
\section{Conclusion, generalisations, obstructions}
We have thus established the convergence of the thresholding algorithm in three different cases, but under several restrictive assumptions we do not yet know how to bypass. Let us list below certain research questions we plan on tackling in the future.

\subsection{Lowering the volume constraint threshold}
A first major step that would need to be taken next is the extension of the convergence results established in this paper to all possible volume constraints $V_0$. Two remarks are in order in this case. The first one is that one would need an \emph{a priori} regularity theory. Although this is manageable, in the case of energetic functionals, in the two dimensional case thanks to \cite{Chanillo2008}, in higher dimensions, the solutions to the problem might no longer be regular enough to apply the shape derivative formalism put in place here. Second, the algorithm converges at best to a critical point. For low volume constraints, we can not guarantee that such critical points are local minimisers, let alone that they are regular enough (even in dimension 2) to use shape hessians, see \cite[Remark 3.20]{Chanillo2008bis}.

\subsection{Degenerate optimal control problems}
The first major underlying assumption is that all of our optimal control problems have extremal points of the admissible set of controls as solutions. Put otherwise, this amounts to requiring that the switch function has no flat zone. Now, in linear control problems for semilinear equations it is often the case that the optimal controls are not extreme points: the switch function has flat zones, and this requires a fine tuning of the thresholding method used to obtain satisfactory numerical simulations see \cite{Nadin_2020}. At this point it is unclear how one could tackle this question.

\subsection{Non-energetic bilinear optimal control problems}
There have been many contributions to the qualitative analysis of bilinear optimal control problems in recent years see \cite{MNP2021} and the references therein. The problem with this type of queries is that, despite the fact that the thresholding algorithm provides satisfactory simulations, several basic questions would need to be answered: the first one is the regularity of optimal controls. \emph{A priori} we can not hope for something better than a $\mathscr C^{1,\alpha}$ boundary, which is not high enough to apply second order shape derivatives arguments. It may be possible to push the methods of \cite{Chanillo_2000} to cover this setting but this is at the moment unclear.

\bibliographystyle{abbrv}

\bibliography{BiblioHessian}

\end{document}